\colorlet{purpleB70}{blue!70!red}
\colorlet{orangeR65}{red!65!yellow}
\definecolor{red2}{HTML}{d41173}
\definecolor{neongreen}{HTML}{1bf702}
\definecolor{radicalred}{HTML}{FF355E}
\definecolor{denim}{HTML}{1560BD}
\definecolor{darkcyan}{rgb}{0.0, 0.55, 0.55}
\definecolor{cilek}{HTML}{FF43A4}
\definecolor{mor}{HTML}{9F00C5}
\definecolor{phlox}{rgb}{0.87, 0.0, 1.0}
\definecolor{fluorescentpink}{HTML}{FF1493}
\definecolor{napiergreen}{rgb}{0.16, 0.5, 0.0}
\definecolor{kellygreen}{rgb}{0.3, 0.73, 0.09}
\definecolor{parisgreen}{HTML}{ 50C878 }
\definecolor{palatinateblue}{rgb}{0.15, 0.23, 0.89}
\definecolor{ceruleanblue}{rgb}{0.16, 0.32, 0.75}
\definecolor{brandeisblue}{rgb}{0.0, 0.44, 1.0}
\definecolor{KLMblue}{HTML}{0FC0FC}
\definecolor{tealblue}{HTML}{2DFFC7}
\definecolor{cinnamon}{rgb}{0.82, 0.41, 0.12}
\definecolor{darkorange}{rgb}{1.0, 0.55, 0.0}
\definecolor{darktangerine}{rgb}{1.0, 0.66, 0.07}
\definecolor{deepcarrotorange}{rgb}{0.91, 0.41, 0.17}
\definecolor{internationalorange}{HTML}{FF4F00}
\definecolor{persimmon}{HTML}{EC5800}
\definecolor{pumpkin}{HTML}{FF7518}
\definecolor{darkred}{rgb}{1,0,0} 
\definecolor{darkgreen}{rgb}{0,0.8,0}
\definecolor{darkblue}{rgb}{0,0,1}
\def\reflb#1#2{\begingroup
    #2%
    \def\@currentlabel{#2}%
    \phantomsection\label{#1}\endgroup
}
\numberwithin{equation}{section}
\newtheorem {Theorem}{Theorem}
\numberwithin{Theorem}{section}
\newtheorem {Lemma}[Theorem]    {Lemma}
\newtheorem {Claim}[Theorem]    {Claim}
\newtheorem {Proposition}[Theorem]{Proposition}
\newtheorem {Corollary}[Theorem]{Corollary}
\theoremstyle{definition}
\newtheorem{Definition}[Theorem]{Definition}
\theoremstyle{remark}
\newtheorem{Remark}[Theorem]{Remark}
\newtheorem{Example}[Theorem]{Example}
\def    \eps    {\epsilon}
\def    \l    {\lambda}
\newcommand{\CA}{{\mathcal A}}
\newcommand{\CR}{{\mathcal R}}
\newcommand{\CS}{{\mathcal S}}
\newcommand{\CSI}{{\mathcal S}_{\mathit ind}}
\newcommand{\cCS}{{\check{\mathcal S}}}
\newcommand{\cCSI}{{\check{\mathcal S}}_{\mathit ind}}
\newcommand{\Ham}{{\mathit{Ham}}}
\newcommand{\tHam}{{\widetilde{\mathit{Ham}}}}
\newcommand{\id}{{\mathit id}}
\newcommand{\const}{{\mathit const}}
\newcommand{\hsi}{\hat{\sigma}}
\newcommand{\tH}{\tilde{H}}
\newcommand{\PP}{{\mathcal P}}
\newcommand{\bPP}{\bar{\mathcal P}}
\def    \nat    {{\natural}}
\def    \F      {{\mathbb F}}
\def    \C      {{\mathbb C}}
\def    \R      {{\mathbb R}}
\def    \Z      {{\mathbb Z}}
\def    \N      {{\mathbb N}}
\def    \Q      {{\mathbb Q}}
\def    \T      {{\mathbb T}}
\def    \CP     {{\mathbb C}{\mathbb P}}
\def    \RP     {{\mathbb R}{\mathbb P}}
\def    \12    {{\frac{1}{2}}}
\def    \p      {\partial}
\def    \im     {\operatorname{im}}
\def    \Imm    {\operatorname{Im}}
\def    \Sp     {\operatorname{Sp}}
\def    \tSp     {\operatorname{\widetilde{Sp}}}
\def    \U     {\operatorname{U}}
\def    \SU     {\operatorname{SU}}
\def    \PU     {\operatorname{PU}}
\def    \HF     {\operatorname{HF}}
\def    \H     {\operatorname{H}}
\def    \CF     {\operatorname{CF}}
\def    \QS    {\operatorname{QS}}
\def    \bPP     {\bar{\mathcal{P}}}
\def    \bx     {\bar{x}}
\def    \rx     {\mathring{x}}
\def    \by     {\bar{y}}
\def    \bl    {\bar{\lambda}}
\def    \vDelta    {\vec{\Delta}}
\def    \sgn  {\mathit{sgn}}
\def    \mult  {\mathit{mult}}
\def    \loop  {\mathit{loop}}
\def    \MUCZ  {\operatorname{\mu_{\scriptscriptstyle{CZ}}}}
\def  \hmu      {\operatorname{\hat{\mu}}}
\def    \s  {\operatorname{c}}
\def    \ssminus        {\smallsetminus}
\def    \hh    {\mathrm{h}}
\begin{document}


\setlength{\smallskipamount}{6pt}
\setlength{\medskipamount}{10pt}
\setlength{\bigskipamount}{16pt}






\title[Pseudo-Rotations vs.\ Rotations]{Pseudo-Rotations vs.\ Rotations}

\author[Viktor Ginzburg]{Viktor L. Ginzburg}
\author[Ba\c sak G\"urel]{Ba\c sak Z. G\"urel}

\address{BG: Department of Mathematics, University of Central Florida,
  Orlando, FL 32816, USA} \email{basak.gurel@ucf.edu}

\address{VG: Department of Mathematics, UC Santa Cruz, Santa Cruz, CA
  95064, USA} \email{ginzburg@ucsc.edu}

\subjclass[2010]{53D40, 37J10, 37J45} 

\keywords{Pseudo-rotations, periodic orbits, Hamiltonian
  diffeomorphisms, Floer homology}

\date{\today} 

\thanks{The work is partially supported by NSF CAREER award
  DMS-1454342 and NSF grant DMS-1440140 through MSRI (BG) and Simons
  Collaboration Grant 581382 (VG)}


\begin{abstract} Continuing the study of Hamiltonian pseudo-rotations
  of projective spaces, we focus on the conjecture that the
  fixed-point data set (the actions and the linearized flows at
  one-periodic orbits) of a pseudo-rotation exactly matches that data
  for a suitable unique true rotation even though the two maps can
  have very different dynamics. We prove this conjecture in several
  instances, e.g., for strongly non-degenerate pseudo-rotations of
  $\CP^2$ with some notable exceptions, which we call ghost
  pseudo-rotations. The existence of ghost pseudo-rotations is a
  completely open question. The conjecture is closely related to the
  properties of the action and index spectra of pseudo-rotations, and
  ghost pseudo-rotations, if they exist, satisfy all known
  restrictions on the fixed-point data for pseudo-rotations but these
  data are distinctly different from the data for any rotation. The
  main new ingredient of the proofs is purely combinatorial and of
  independent interest. This is the index divisibility theorem
  connecting the divisibility properties of the Conley--Zehnder index
  sequence for the iterates of a map with the properties of its
  spectrum.
\end{abstract}

\maketitle


\tableofcontents


\section{Introduction and main results}
\label{sec:intro+results}

\subsection{Introduction}
\label{sec:intro}
In this paper we continue the study of Hamiltonian pseudo-rotations of
$\CP^n$ started in \cite{GG:PR} and focus on the conjecture that the
fixed point data (the actions and the linearized flow as an element of
$\tSp(2n)$ up to conjugation) of a pseudo-rotation exactly matches
that data for a suitable unique true rotation, i.e., an element of a
Hamiltonian circle action. We show that this is indeed the case in
many instances, e.g., for strongly non-degenerate pseudo-rotations of
$\CP^2$, but possibly with some exceptions. (Even for $\CP^1$ this is
ultimately a highly non-trivial fact, although in this case it easily
follows from known results; see Section \ref{sec:2D}.) As a
consequence, while the two maps can have very different dynamics, it
is unlikely that they can be distinguished by Floer theoretical and
even more generally by symplectic topological methods.

The existence of a matching rotation is a consequence of symplectic
topological restrictions on the fixed-point data of pseudo-rotations
and rather involved combinatorics, which works for, roughly speaking,
almost all but not all sets of the fixed-point data. In fact, already
for $\CP^2$, there do exist sets of the fixed-point data that meet
these restrictions but cannot come from a true rotation. In other
words, there are no known symplectic topological obstructions to the
existence of pseudo-rotations with the fixed-point data not matching
any rotation. We call them \emph{ghost pseudo-rotations}.

This fact points at the possibility that the connection between
rotations and pseudo-rotations is much weaker than currently
thought. For instance, ghost pseudo-rotations cannot come from any
known variant of the conjugation method. A similar phenomenon must
occur for all $\CP^{n>1}$, but when $n>2$ the combinatorics gets very
complicated and the fixed-point data of ghost pseudo-rotations become
difficult to be described explicitly. If ghost pseudo-rotations exist
for $\CP^n$, it is entirely possible that there are manifolds that
admit pseudo-rotations but not true rotations.

In the context of this paper, a \emph{pseudo-rotation} of $\CP^n$ is a
Hamiltonian diffeomorphism of $\CP^n$ with exactly $n+1$ periodic
points; this is the minimal possible number of periodic points by the
Arnold conjecture, \cite{Fl, Fo, FW}. The periodic points are then
necessarily the fixed points.  The term comes from low-dimensional
dynamics where it is used for maps of $S^2$ (or the closed disk $D^2$)
with exactly two (or one) periodic points, which are then also the
fixed points. More generally, one should think of a pseudo-rotation as
a Hamiltonian diffeomorphism with finite and minimal possible number
of periodic points. We refer the reader to \cite{GG:PR, GG:AI, Sh} for
a discussion of various ways to make this definition precise for other
symplectic manifolds and some other related questions. Here we only
mention that many, probably most, symplectic manifolds do not admit
Hamiltonian diffeomorphisms with finitely many periodic orbits (and in
particular pseudo-rotations) by the Conley conjecture; see
\cite{GG:survey, GG:Rev} and references therein.

Among pseudo-rotations of $\CP^n$ are \emph{true rotations} (or just
rotations for brevity) with finitely many periodic points (a generic
condition in this class), where by a true rotation we understand a
Hamiltonian diffeomorphism arising from the action of an element of
$\SU(n)$ on $\CP^n$. Once $\CP^n$ is identified with the quotient of
the unit sphere $S^{2n+1}\subset\C^{n+1}$, a true rotation is the
time-one map generated by a quadratic Hamiltonian $Q=\sum a_i|z_i|^2$,
for a suitable choice of linear coordinates $z_i$. A rotation is a
pseudo-rotation, i.e., it has exactly $n+1$ periodic points, if and
only if $a_i-a_j\not\in\Q$ for $i \neq j$. Otherwise, it has
infinitely many periodic orbits.

However, not every pseudo-rotation is conjugate to a true rotation.
Indeed, true rotations have simple, essentially trivial dynamics.
This is in general not the case for pseudo-rotations, and
pseudo-rotations occupy a special place among low-dimensional
dynamical systems. In \cite{AK}, Anosov and Katok constructed area
preserving diffeomorphisms $\varphi$ of $S^2$ with exactly three
ergodic invariant measures, the area form and the two fixed points, by
introducing what is now known as the conjugation method; see also
\cite{FK} and references therein. Such a diffeomorphism $\varphi$ is
automatically a pseudo-rotation. Indeed, $\varphi$ is area preserving
and hence Hamiltonian, and $\varphi$ has exactly two periodic orbits,
which are its fixed points.  Furthermore, $\varphi$ is ergodic,
necessarily has dense orbits, and thus is not conjugate to a true
rotation. As a consequence, the products $(S^2)^n$ also admit
pseudo-rotations which are not conjugate to true rotations. The
conjugation method can also be applied to construct dynamically
interesting pseudo-rotations of $\CP^n$ and other symplectic toric
manifolds in higher dimensions; see \cite{LRS}.

The study of the dynamics of pseudo-rotations in dimension two by
symplectic topological methods (finite energy foliations) was
initiated in \cite{Br:Annals, Br, BH}. In \cite{GG:PR}, Floer theory
and the results from \cite{GG:gaps, GG:hyperbolic, GK} were utilized
to investigate the dynamics of pseudo-rotations in higher
dimensions. Furthermore, connections between pseudo-rotations and the
symplectic topology of the underlying manifold (Gromov--Witten
invariants and the quantum Steenrod square) have been recently
explored in \cite{CiGG:IMRN, CiGG, Sh:new, Sh}.

\subsection{Results}
\label{sec:results} The main goal of the paper is to compare the
numerical invariants of periodic orbits (the action, the mean index
and Floquet multipliers) for rotations and pseudo-rotations of
$\CP^n$. Hypothetically, for every pseudo-rotation there exists
exactly one ``matching'' true rotation with precisely the same
numerical invariants. (In particular, by this conjecture, every
periodic orbit of a pseudo-rotation is elliptic and non-degenerate.)
For $n=1$, this is an easy consequence of rather standard, although
highly non-trivial, results; see Section \ref{sec:2D}. We prove the
conjecture for strongly non-degenerate pseudo-rotations of $\CP^2$,
but with a certain class of exceptions, and, under some additional
assumptions, in all dimensions. Note also that this conjecture is
consistent with the general expectation that every pseudo-rotation is
obtained from a true rotation by a variant of the conjugation method;
cf.\ \cite{Br:Annals, FK}.

The aforementioned exceptions -- ghost pseudo-rotations (see
Definition \ref{def:ghost})-- are extremely unlikely to
exist. However, if they do, this will be a strong indication that the
connection between pseudo-rotations and rotations is much more loose
than currently thought.

\subsubsection{Rotations and pseudo-rotations}
To state the results more precisely we need to recall several facts
about pseudo-rotations. These facts are essentially of symplectic
topological nature and go back to \cite{GG:gaps, GK}. (A more detailed
review is given in Section \ref{sec:background} and the relevant
definitions are recalled in Section \ref{sec:prelim}.) Here and
throughout the paper $\CP^n$ is equipped with the standard, up to a
factor, Fubini--Study symplectic structure $\omega$.

\begin{Definition}
\label{def:PR}
A pseudo-rotation of $\CP^n$ is a Hamiltonian diffeomorphism of
$\CP^n$ with exactly $n+1$ periodic points.
\end{Definition}

Most of our results concern strongly non-degenerate pseudo-rotations
$\varphi$. (Recall that $\varphi$ is called strongly non-degenerate if
all iterates $\varphi^k$, $k\in\N$, are non-degenerate.) However, it
is useful and illuminating to discuss the key facts in a more general
setting.

Let $\varphi=\varphi_H$ be a pseudo-rotation of $\CP^n$ generated by a
time-dependent Hamiltonian $H$, viewed as an element of the universal
covering $\tHam(\CP^n)$ of the group $\Ham(\CP^n)$ of Hamiltonian
diffeomorphisms. We associate two spectra to $\varphi$. One is the
standard action spectrum $\CS(H)$ comprising the actions of capped
one-periodic orbits of the Hamiltonian flow $\varphi_H^t$. For
instance, when $\varphi_H$ is a true rotation and $H=\sum a_i|z_i|^2$
the action spectrum $\CS(H)$ is the union of the sets $a_i+\lambda\Z$
where $\lambda$ is the integral of $\omega$ over $\CP^1$.  (Here and
throughout we identify $\CP^n$ with the quotient of the unit sphere
$\sum|z_i|^2=1$.) The second spectrum is the mean index spectrum
$\CSI(\varphi)$ formed by the mean indices of capped one-periodic
orbits. Furthermore, every point in each of the spectra is marked or
labeled by an integer. For a non-degenerate pseudo-rotation $\varphi$,
a point in either of the spectra is marked by $l\in\Z$ when the
Conley--Zehnder index of the corresponding orbit is $2l-n$. Thus we
have the marked spectra $\cCS(H)$ and $\cCSI(\varphi)$; these are
functions $\Z\to \R$ sending $l$ to the action or the mean index of
the orbit of index $2l-n$, since all integers of the same parity as
$n$ occur as indices. This construction extends to degenerate
pseudo-rotations; see Section \ref{sec:spectra}.

The first key fact we need is that for a pseudo-rotation
$\varphi=\varphi_H$ the two spectra agree up to a factor and a shift:
$$
\cCS(H)=\frac{\lambda}{2(n+1)}\cCSI(\varphi)+\const,
$$
where \emph{const} depends on $H$. Thus by adding a constant to $H$ we
can ensure that
$$
\cCS(H)=\frac{\lambda}{2(n+1)}\cCSI(\varphi).
$$
When $\varphi_H$ is a rotation and $H$ is a quadratic form, this is
the condition that $\sum a_i=0$. Let us denote the points labeled by
$0,\ldots, n$ in $\cCSI(\varphi)$ by $\Delta_0,\ldots,\Delta_n$. One
can show that $\sum \Delta_i=0$ when $\varphi$ is a true rotation; see
Lemma \ref{lemma:new_sum}. In general, let us call a pseudo-rotation
meeting this requirement \emph{balanced}; see Definition
\ref{def:balanced}. This is a condition on the fixed-point data or, to
be more precise, on the mean indices of the one-periodic orbits of
$\varphi$. It was conjectured in \cite{GK} that every pseudo-rotation
is balanced. In Section \ref{sec:PR} we prove this conjecture for
strongly non-degenerate pseudo-rotations of $\CP^2$ which are not
ghost pseudo-rotations:

\begin{Theorem}
  \label{thm:balanced-4D-intro}
  Let $\varphi$ be a strongly non-degenerate pseudo-rotation of
  $\CP^2$. Then all fixed points of $\varphi$ are elliptic and, unless
  it is a ghost pseudo-rotation in the sense of Definition
  \ref{def:ghost}, $\varphi$ is balanced.
\end{Theorem}

To define ghost pseudo-rotations $\varphi$ of $\CP^2$, consider the
first Krein type eigenvalues of the linearization $D\varphi$ at its
three fixed points; see Section \ref{sec:index} or \cite{Ab,SZ,Lo} for
the definition.  We will assume that $\varphi$ is strongly
non-degenerate and therefore these eigenvalues are unit complex
numbers (not equal to 1) by Theorem \ref{thm:balanced-4D-intro}. Thus
to each of the three fixed points we have associated a pair of unit
complex numbers which we will call \emph{Floquet multipliers}.  By
definition, $\varphi$ is a ghost pseudo-rotation when its Floquet
multipliers satisfy certain very restrictive constraints:

\begin{Definition}
  \label{def:ghost}
  A strongly non-degenerate pseudo-rotation $\varphi$ of $\CP^2$ is
  said to be a \emph{ghost pseudo-rotation} if its fixed points have
  Floquet multipliers of the form $(\lambda,\lambda)$,
  $(\lambda,\eta)$ and $(\eta,\eta)$, where $\lambda$ and $\eta$ are
  unit complex numbers and $\lambda=\bar{\eta}^2$.
\end{Definition}

It is easy to see that no true rotation can have the same Floquet
multipliers as a ghost pseudo-rotation. Deferring a more detailed
discussion of ghost pseudo-rotations to Section \ref{sec:Ghosts}, here
we only mention that one can expect all Floquet multipliers to be
distinct for a generic pseudo-rotation. Thus ghost pseudo-rotations,
if they exist, are probably extremely non-generic even in the class of
pseudo-rotations. However, one has to be careful with this kind of
assertions, since, for instance, the Floquet multipliers of a
pseudo-rotation must satisfy a resonance relation (see Section
\ref{sec:spectra} or \cite{GK}), although this relation need not have
the specific form from Definition \ref{def:ghost}. It is also worth
keeping in mind that the Floquet multipliers do not fully determine
the entire set of the fixed-point data: e.g., the mean indices and
indices are determined only modulo 2.

The proof of Theorem \ref{thm:balanced-4D-intro} relies on symplectic
topological restrictions on the fixed-point data of pseudo-rotations
$\varphi$, established in \cite {GG:gaps,GK}. Then a purely
combinatorial argument shows that $\varphi$ must be balanced unless it
is a ghost pseudo-rotation. In the latter case, there exist
fixed-point data compatible with the Floquet multipliers and meeting
those restrictions such that $\varphi$ is not balanced.

Returning to the comparison of rotations and pseudo-rotations note
that for a rotation $\varphi$ the spectrum $\CSI(\varphi)$ completely
determines the map $\varphi$ as an element of $\tHam(\CP^n)$; see
Section \ref{sec:R}. Then, as is easy to see, for a balanced strongly
non-degenerate pseudo-rotation $\varphi$, there exists a unique
rotation $R_\varphi$, called the \emph{matching} rotation, such that
$\CSI(\varphi)=\CSI(R_\varphi)$ and, moreover,
$\cCSI(\varphi)=\cCSI(R_\varphi)$; see Section \ref{sec:PR}. Thus we
have a one-to-one correspondence between the capped one-periodic
orbits of $\varphi$ and $R_\varphi$, where the corresponding orbits
$\bx_i$ of $\varphi$ and $\by_i$ of $R_\varphi$ have equal
Conley--Zender indices, mean indices and, up to a shift, actions. It
turns out that under some additional conditions the corresponding
orbits $x_i$ of $\varphi$ and $y_i$ of $R_\varphi$ have equal unit
spectra $\sigma(x_i)$ and $\sigma(y_i)$ of the linearized return
maps. For instance, we have the following result proved in Section
\ref{sec:PR}:

\begin{Theorem}
\label{thm:matching2-intro}
Let $\varphi$ be a balanced strongly non-degenerate pseudo-rotation of
$\CP^n$ and let $R_\varphi$ be its matching rotation. Assume that for
every one-periodic orbit $y_i$ of $R_\varphi$ all unit eigenvalues at
$y_i$ (i.e., the elements of $\sigma(y_i)$) are distinct and
$\sigma(y_i)\cap\sigma(y_j)=\emptyset$ for any pair $i\neq j$. Then
$\sigma(x_i)=\sigma(y_i)$.
\end{Theorem}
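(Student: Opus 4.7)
The plan is to recover the multiset of unit eigenvalues at each fixed point of $\varphi$ from the sequence of Conley--Zehnder indices of its iterates, and to force those indices to coincide with the analogous indices for $R_\varphi$ by iterating the matching. The index divisibility theorem advertised in the introduction is what makes the recovery possible under the genericity hypotheses imposed on the $\sigma(y_i)$.

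First I would extend the matching to all iterates. Since $\varphi$ is strongly non-degenerate, every $\varphi^k$ is a non-degenerate pseudo-rotation. Since $R_\varphi$ is itself a pseudo-rotation, its rotation numbers are irrational, and hence $R_\varphi^k$ is a non-degenerate rotation for every $k$. Mean indices scale linearly under iteration and the balanced condition is preserved; so, once the cappings of $\bx_i^k$ and $\by_i^k$ are normalized to be the $k$-fold iterates of fixed cappings of $\bx_i$ and $\by_i$, the identity $\cCSI(\varphi)=\cCSI(R_\varphi)$ promotes to $\cCSI(\varphi^k)=\cCSI(R_\varphi^k)$ for every $k\geq 1$; equivalently, $R_\varphi^k$ is the matching rotation of $\varphi^k$.

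Next I would upgrade this spectrum-level identification to an orbit-level one. The disjointness hypothesis on the sets $\sigma(y_i)$ forces the mean indices $\Delta_i:=\Delta(\by_i)$ to be pairwise distinct, and because $R_\varphi$ is itself a pseudo-rotation these differ by irrational multiples of $2(n+1)$. Consequently, for every $k\geq 1$ the reductions $k\Delta_i \pmod{2(n+1)}$ remain pairwise distinct, so the matching of capped orbits of $\varphi^k$ with those of $R_\varphi^k$, which is determined by equality of mean indices, must send $\bx_i^k$ to $\by_i^k$. In particular,
$$
\MUCZ(\bx_i^k)=\MUCZ(\by_i^k) \qquad \text{for all } i \text{ and all } k\geq 1.
$$

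The heart of the argument is then the combinatorial step: given the sequence $k\mapsto \MUCZ(\bx_i^k)$ and the mean index $\Delta(\bx_i)$, reconstruct $\sigma(x_i)$. This is the role of the index divisibility theorem. The assumption that all elements of $\sigma(y_i)$ are distinct and that the sets $\sigma(y_i)$ are pairwise disjoint is precisely what ensures that the sequence $\MUCZ(\by_i^k)$ uniquely identifies the multiset of rotation angles at $y_i$, and hence $\sigma(y_i)$ itself. Applying the theorem to the equal sequence for $\bx_i$ then recovers the same multiset, yielding $\sigma(x_i)=\sigma(y_i)$. The main obstacle lies here: verifying that the hypotheses on $\sigma(y_i)$ are exactly the right genericity assumption for the index divisibility theorem to pin down the spectrum uniquely, with the first two steps above playing a primarily bookkeeping role that promote the identity $\cCSI(\varphi)=\cCSI(R_\varphi)$ into per-orbit CZ-index equalities at every iteration level.
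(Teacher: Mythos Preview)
Your argument has a genuine gap at the very first step. You assert that ``the balanced condition is preserved'' under iteration, and then that $\cCSI(\varphi^k)=\cCSI(R_\varphi^k)$ for all $k$. Neither is known. The paper explicitly observes (immediately after the definition of \emph{balanced}) that $\varphi$ being balanced does \emph{not} automatically imply that the iterates $\varphi^k$ are balanced; only the weaker congruence $\sum_i\hmu(x_i^k)\equiv 0 \pmod{2(n+1)}$ follows. Mean indices do scale linearly, so the iterated cappings $\bx_i^k$ satisfy $\sum_i\hmu(\bx_i^k)=0$; but these are not the cappings that define $\cCSI(\varphi^k)$, which requires $|\mu(\cdot)|\leq n$. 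After the necessary recapping the sum may fail to vanish. In effect, your outline reproduces the proof of Theorem~\ref{thm:matching1}, whose hypothesis is precisely that all iterates are balanced, and bypasses the difficulty that Theorem~\ref{thm:matching2-intro} is designed to overcome.

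Concretely, from $\CSI(\varphi^k)=\CSI(R_\varphi^k)$ and equality of mean indices $\hmu(\bx_i^k)=\hmu(\by_i^k)$ you cannot conclude $\mu(\bx_i^k)=\mu(\by_i^k)$: the two strictly monotone bijections $\Z\to\CSI$ may differ by a shift $s(k)$, and the distinctness of the $k\Delta_i$ modulo $2(n+1)$ does nothing to pin this shift down. The paper's proof accepts this shift, sets $\Phi_i=D\varphi^t|_{\bx_i}$, $\Psi_i=D\varphi_{R_\varphi}^t|_{\by_i}$, and observes that $\mu\big((\Phi_i\oplus\Psi_i^{-1})^k\big)=s(k)$ is \emph{independent of $i$}. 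Corollary~\ref{cor:index-spec} then yields $\hsi(\Phi_i\oplus\Psi_i^{-1})=\hsi(\Phi_j\oplus\Psi_j^{-1})$ for all $i,j$. The genericity hypotheses on the $\sigma(y_i)$ are used not to recover the spectrum from the index sequence as you suggest (zero-signature eigenvalues are invisible to that sequence; cf.\ Remark~\ref{rmk:determine}), but in a pigeonhole argument comparing the ``$\Psi$-only'' and ``$\Phi$-contributing'' parts of these common decorated spectra to force them to be empty. That is where the disjointness of the $\sigma(y_i)$ actually enters.
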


As a consequence, under the conditions of the theorem, the fixed
points of $\varphi$ are elliptic and all iterates $\varphi^k$ are
balanced. Moreover, for $\CP^2$, the assertion of the theorem holds
for any strongly non-degenerate pseudo-rotation without additional
assumptions on the spectra $\sigma(y_i)$; see Theorem
\ref{thm:matching1} and Corollary \ref{cor:matching-4D}. Proofs of
these results also rely on symplectic topological restrictions on the
fixed-point data of pseudo-rotations $\varphi$ and combinatorial
arguments. Higher dimensional analogues of ghost pseudo-rotations do
not arise in this case due to the assumption on the eigenvalues and/or
the condition that $\varphi$ is balanced.

\subsubsection{Index Analysis}
The key new ingredient in the proofs of these results is essentially
combinatorial. This is Theorem \ref{thm:index-div} (Index Divisibility
Theorem) relating the behavior of the Conley-Zehnder index under
iterations and the spectrum of a linear map. Although this result
plays a purely technical role in the paper, the theorem and its proof
are of independent interest. Referring the reader to Sections
\ref{sec:index} and \ref{sec:index-div-pf} for details, here we only
briefly outline the underlying idea.

Consider an element $\Phi\in\tSp(2n)$ which we require to be strongly
non-degenerate (i.e., all iterates $\Phi^k$, $k\in\N$, are
non-degenerate). Then we have the sequence of the Conley--Zehnder
indices $\mu_k:=\mu\big(\Phi^k\big)$ defined; see, e.g., \cite{Lo,
  SZ}. We denote by $\mu'_k$ the derivative or the index jump
sequence: $\mu'_k:=\mu_{k+1}-\mu_k$. Furthermore, let us decompose
$\Phi$ as the product of a loop $\phi$ and the direct sum of three
short paths: an elliptic path $\Phi_e$, a positive hyperbolic path
$\Phi_h$, and a negative hyperbolic path $\Phi_{-h}$. The requirement
that the paths are short, making this decomposition unique up to
homotopy, is explicitly spelled out in Section \ref{sec:index}. Denote
by $\sigma_+(\Phi)$ the part of the spectrum of $\Phi_e$ lying in the
upper half plane and, for $\lambda\in\sigma_+(\Phi)$, let
$\sgn_\lambda(\Phi)$ be its signature. Furthermore, set $\loop(\Phi)$
to be the mean index of $\phi$ and $\mult_{-1}(\Phi)$ to be half of
the dimension of the domain of $\Phi_{-h}$.

\begin{Theorem}[Index Divisibility]
  \label{thm:index-div-intro} Fix $l\in\N$.  The derivative sequence
  $\mu'_k$ is divisible by $2l$ if and only if the following two
  conditions are satisfied:
\begin{itemize}
\item[\rm(i)] $2l \mid \big( \loop(\Phi)+\mult_{-1}(\Phi) \big)$,
\item[\rm(ii)] $l \mid \sgn_{\lambda}(\Phi)$ for all
  $\lambda\in\sigma_+(\Phi)$.
\end{itemize}
\end{Theorem}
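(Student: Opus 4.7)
The plan is to reduce the divisibility question to an explicit arithmetic identity for the jump sequence $\mu'_k$ and then perform an equidistribution argument on the torus $(\R/\Z)^{|\sigma_+(\Phi)|}$.

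\emph{Arithmetic setup.} Using the decomposition $\Phi=\phi\cdot(\Phi_e\oplus\Phi_h\oplus\Phi_{-h})$ and the Bott-type iteration formula for the Conley--Zehnder index (Salamon--Zehnder, Long), and noting that strong non-degeneracy forces each angle $\theta_\lambda$ (defined by $\lambda=e^{2\pi i\theta_\lambda}\in\sigma_+(\Phi)$ with $\theta_\lambda\in(0,1)$) to be irrational, one obtains
\[
\mu_k \;=\; k\loop(\Phi) \;+\; k\mult_{-1}(\Phi) \;+\; \sum_{\lambda\in\sigma_+(\Phi)}\sgn_\lambda(\Phi)\bigl(2\lfloor k\theta_\lambda\rfloor+1\bigr).
\]
Taking the first difference kills the constants and turns each floor term into a $\{0,1\}$-valued jump indicator $\delta_\lambda(k)=\lfloor(k+1)\theta_\lambda\rfloor-\lfloor k\theta_\lambda\rfloor$, giving the master identity
\[
\mu'_k \;=\; \loop(\Phi)+\mult_{-1}(\Phi) \;+\; 2\sum_{\lambda\in\sigma_+(\Phi)}\sgn_\lambda(\Phi)\cdot\delta_\lambda(k).
\]

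\emph{Sufficiency} is then immediate: under (i) the constant part vanishes mod $2l$, and under (ii) each summand $2\sgn_\lambda(\Phi)\delta_\lambda(k)$ vanishes mod $2l$, so $2l\mid\mu'_k$ for every $k$. For \emph{necessity}, assume $2l\mid\mu'_k$ for all $k$ and extract (i) and (ii) by choosing the jump-vectors $(\delta_\lambda(k))_\lambda$ carefully. First, produce a single $k$ with $\delta_\lambda(k)=0$ for every $\lambda$ simultaneously (equivalently $\{k\theta_\lambda\}<1-\theta_\lambda$) using Weyl equidistribution of $(\theta_\lambda)_\lambda$ on its orbit closure; substituting this $k$ into the master identity yields (i). Then, fixing $\lambda_0$, subtract the congruences at two iterates $k,k'$ to obtain $l\mid\sum_\lambda\sgn_\lambda(\Phi)(\delta_\lambda(k')-\delta_\lambda(k))$, and choose $k,k'$ so that only the $\lambda_0$-coordinate of $(\delta_\lambda(\cdot))_\lambda$ differs, isolating $\pm\sgn_{\lambda_0}(\Phi)$ on the right to give (ii).

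\emph{Main obstacle.} The delicate step is rational resonance among the $\theta_\lambda$'s: when there are nontrivial $\Q$-linear relations, the orbit closure $T\subset(\R/\Z)^{|\sigma_+(\Phi)|}$ of $(\theta_\lambda)_\lambda$ is a proper subtorus and the coordinate-isolating jump patterns invoked above are not all realizable. To get around this, I plan to partition $\sigma_+(\Phi)$ into maximal clusters of rationally-related eigenvalues, derive cluster-level divisibility via the shift-comparison trick applied inside $T$, and then bootstrap to per-$\lambda$ divisibility through a dual-lattice (Kronecker-type) argument that exploits the symplectic pairing structure on the spectrum. I expect this lattice-theoretic step to be the real technical core of the proof.
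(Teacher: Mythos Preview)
Your master identity and the sufficiency direction are correct and match the paper exactly. For necessity of (i) you are working harder than needed: with $\lambda$ in the open upper half-circle one has $\delta_\lambda(1)=0$ for every $\lambda$, so $k=1$ already gives $\mu'_1=\loop(\Phi)+\mult_{-1}(\Phi)$ and hence (i) without any equidistribution.

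The genuine gap is your handling of (ii) in the resonant case. Two points:

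\emph{First}, ``partition into maximal clusters of rationally-related eigenvalues'' is not a well-posed reduction. Rational dependence among the $\theta_\lambda$ is encoded by a sublattice of $\Z^r$, not by an equivalence relation on the index set; three angles can satisfy a single relation like $\theta_1+\theta_2-\theta_3\in\Z$ with no two of them pairwise related, so there are no nontrivial clusters yet the orbit closure is still a proper subtorus and your isolating pairs $(k,k')$ need not exist.

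\emph{Second}, the ``dual-lattice (Kronecker-type) argument that exploits the symplectic pairing structure on the spectrum'' does not correspond to anything available here. The data on $\sigma_+(\Phi)$ is just a finite set of irrational points on the upper half-circle together with integer signatures; there is no additional pairing to exploit. In particular, nothing in the hypotheses prevents a one-dimensional orbit closure in which the crossing set $C_j$ for one eigenvalue is \emph{contained} in $C_i$ for another, so you cannot hope to isolate $\lambda_j$ directly by choosing $k,k'$.

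What the paper does instead is an \emph{inductive} argument: rather than isolating every $\lambda$, it isolates \emph{one} $\lambda_i$, proves $l\mid\sgn_{\lambda_i}(\Phi)$, splits off the $\lambda_i$-block (which preserves the hypothesis on $\mu'_k$ since (b)$\Rightarrow$(a) is already known), and recurses on fewer eigenvalues. The existence of such an isolatable $\lambda_i$ is the real content; it is proved geometrically. One shows (Lemma~\ref{lemma:intersect}) that the intersection number of \emph{any} path in the orbit-closure subgroup $\Gamma\subset\T^r$ with the weighted hyperplane cycle $\sum_i\sgn_{\lambda_i}(\Phi)\,\T^{r-1}_i$ is divisible by $l$, and then (Lemma~\ref{lemma:components}) that some coordinate hyperplane meets $\Gamma$ in a set not covered by the other hyperplane intersections. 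The latter lemma is where the hypotheses that the $\lambda_i$ are distinct \emph{and} lie in the open upper half-circle (so $\lambda_i\neq\bar\lambda_j$) are actually used, via a careful analysis of the one-dimensional case: one picks the $i$ for which the cyclic group $\Gamma_0\cap\T^{r-1}_i$ is maximal under inclusion. Your proposal does not contain this mechanism, and I do not see how to complete it along the lines you sketch.
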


One consequence of the index divisibility theorem (Corollary
\ref{cor:index-spec}) is that each of the sequences $\mu_k$ and
$\mu'_k$ completely determines the spectrum $\sigma_+(\Phi)$ together
with signatures except for the eigenvalues with zero signature and
that the jump sequence determines the index sequence. The proof of the
theorem, given in Section \ref{sec:index-div-pf}, is geometrical and
relies on the properties of a certain cycle associated with $\Phi$ in
the torus $\T^r$, where $r$ is the number of distinct elements in
$\sigma_+(\Phi)$.

\medskip

\noindent{\bf Acknowledgements.} The authors are grateful to the
referee for useful remarks and suggestions and for spotting an error
in the first version of this paper.  Parts of this work were carried
out during the first author's visit to NCTS (Taipei, Taiwan) and while
the second author was in residence at MSRI, Berkeley, CA, during the
Fall 2018 semester.  The authors would like to thank these institutes
for their warm hospitality and support.

\section{Preliminaries}
\label{sec:prelim}
The goal of this section is to set notation and conventions, give a
brief review of Hamiltonian Floer homology and several other notions
from symplectic geometry used in the paper. The reader may consider
consulting this section only as necessary.

\subsection{Conventions and notation}
\label{sec:conventions}
Throughout the paper the underlying symplectic manifold
$(M^{2n},\omega)$ will be $\CP^n$ equipped with the standard
Fubini--Study symplectic form $\omega$. It will be convenient however
to vary the normalization of this form and so we set
$\lambda=\left<[\omega],\CP^1\right>$. In other words, $\lambda$ is
the positive generator of $\left< [\omega],\pi_2(M)\right>\subset \R$,
the rationality constant. For the standard Fubini--Study normalization
$\lambda=\pi$. Recall also that the minimal Chern number $N$, i.e.,
the positive generator of $\left< c_1(TM),\pi_2(M)\right>$, is $n+1$
for $\CP^n$.

All Hamiltonians $H$ are assumed to be $k$-periodic in time, i.e.,
$H\colon S^1_k\times M\to\R$, where $S^1_k=\R/k\Z$ and $k\in\N$.  When
the period $k$ is not specified, it is equal to one and
$S^1=S^1_1=\R/\Z$. We set $H_t = H(t,\cdot)$ for $t\in S^1_k$. The
Hamiltonian vector field $X_H$ of $H$ is defined by
$i_{X_H}\omega=-dH$. The (time-dependent) flow of $X_H$ is denoted by
$\varphi_H^t$ and its time-one map by $\varphi_H$. Such time-one maps
are referred to as \emph{Hamiltonian diffeomorphisms}. A one-periodic
Hamiltonian $H$ can always be treated as $k$-periodic, which we will
then denote by $H^{\nat k}$ and, abusing terminology, call
$H^{\nat k}$ the $k$-th iteration of $H$. \emph{Throughout the paper,
  all Hamiltonian diffeomorphisms are viewed as elements of the
  universal covering $\tHam(\CP^n)$ of the group $\Ham(\CP^n)$ of
  Hamiltonian diffeomorphisms.}

Let $x\colon S^1_k\to M$ be a contractible loop. A \emph{capping} of
$x$ is an equivalence class of maps $A\colon D^2\to M$ such that
$A|_{S^1_k}=x$. (Here $D^2\subset \C$ is the unit disk and
$S^1_k=\R/k\Z$ is identified with $\p D^2=S^1$ via the map
$t\mapsto e^{2\pi\sqrt{-1}t/k}$.) Two cappings $A$ and $A'$ of $x$ are
equivalent if the integrals of $\omega$ (and hence of $c_1(TM)$) over
the sphere obtained by attaching $A$ to $A'$, where $A'$ is taken with
the opposite orientation, are equal to zero. A capped closed curve
$\bar{x}$ is, by definition, a closed curve $x$ equipped with an
equivalence class of cappings. In what follows, the presence of
capping is always indicated by a bar.

The action of a Hamiltonian $H$ on a capped closed curve
$\bar{x}=(x,A)$ is
$$
\CA_H(\bar{x})=-\int_A\omega+\int_{S^1} H_t(x(t))\,dt.
$$
The space of capped closed curves is a covering space of the space of
contractible loops, and the critical points of $\CA_H$ on this space
are exactly the capped one-periodic orbits of $X_H$. The \emph{action
  spectrum} $\CS(H)$ of $H$ is the set of critical values of
$\CA_H$. This is a zero measure set; see, e.g., \cite{HZ}. (Sometimes,
abusing notation, we will use the notation $\CS(\varphi)$, where
$\varphi=\varphi_H$, with the understanding that $H$ is fixed in the
background.)

These definitions extend to $k$-periodic orbits and Hamiltonians in an
obvious way. Clearly, the action functional is homogeneous with
respect to iteration:
$$
\CA_{H^{\nat k}}\big(\bx^k\big)=k\CA_H(\bx),
$$
where $\bx^k$ is the $k$-th iteration of the capped orbit $\bx$.  We
denote the set of $k$-periodic orbits of $H$ by $\PP_k(H)$. The set of
all periodic orbits will be denoted by $\PP(H)$. An un-iterated
periodic orbit is called \emph{simple}.

A periodic orbit $x$ of $H$ is said to be \emph{non-degenerate} if the
linearized return map $D\varphi_H \colon T_{x(0)}M\to T_{x(0)}M$ has
no eigenvalues equal to one. A Hamiltonian $H$ is non-degenerate if
all its one-periodic orbits are non-degenerate and $H$ is
\emph{strongly non-degenerate} if all periodic orbits of $H$ (of all
periods) are non-degenerate.

Let $\bar{x}=(x,A)$ be a non-degenerate capped periodic orbit.  The
\emph{Conley--Zehnder index} $\mu(\bar{x})\in\Z$ is defined, up to a
sign, as in \cite{Sa, SZ}. In this paper, we normalize $\mu$ so that
$\mu(\bar{x})=n$ when $x$ is a non-degenerate maximum (with trivial
capping) of an autonomous Hamiltonian with small Hessian. The
\emph{mean index} $\hmu(\bx)\in\R$ measures, roughly speaking, the
total angle swept by certain unit eigenvalues of the linearized flow
$D\varphi^t_H|_x$ with respect to the trivialization associated with
the capping; see \cite{Lo, SZ}. The mean index is defined even when
$x$ is degenerate and depends continuously on $H$ and $\bx$ in the
obvious sense.  Furthermore,
\begin{equation}
\label{eq:mean-cz}
\big|\hmu(\bx)-\mu(\bx)\big|\leq n
\end{equation} 
and the inequality is strict when $x$ is non-degenerate or even weakly
non-degenerate, i.e., at least one of the eigenvalues is different
from one, \cite{SZ}. To make sense of \eqref{eq:mean-cz} when $x$ is
degenerate, one has to use one of the extensions of $\mu$ to
degenerate orbits. This can be, for instance, the Robbin--Salamon
index, \cite{RS}, or the upper or lower semi-continuous extension;
cf.\ \cite{GG:convex}. However, the extension which is more useful for
us is what we call the LS-index defined in Section
\ref{sec:background}. It is specific to $\CP^n$ and non-local.  The
mean index is homogeneous with respect to iteration:
$\hmu\big(\bx^k\big)=k\hmu(\bx)$.

The index $\mu(x)$ of an un-capped orbit $x$ is well defined as an
element of $\Z/2N\Z$. Likewise, the mean index $\hmu(x)$ is well
defined as an element of $\R/2N\Z$.

Throughout the paper, by the \emph{fixed-point data} of $\varphi$ we
mean the map which assigns to a capped one-periodic orbit $\bx$ the
linearized flow along $\bx$, up to conjugation in $\tSp(2n)$, and the
action of $\bx$. We will usually only need the numerical parts of this
data set, e.g., the eigenvalues of the return map and the
Conley--Zehnder and mean indices of the orbits.

\subsection{Floer homology and spectral invariants}
\label{sec:FH}
In this subsection, we very briefly discuss, mainly to set notation,
spectral invariants and Floer homology. We refer the reader to, e.g.,
\cite{GG:gaps, MS, Sa, SZ} for detailed accounts and additional
references.

Fix a ground field $\F$. For a non-degenerate Hamiltonian $H$ on $M$
we denote the filtered Floer complex of $H$ by $\CF^{(a,\, b)}_m(H)$,
where $-\infty\leq a < b \leq \infty$ and $a$ and $b$ are not in
$\CS(H)$. As a vector space, $\CF^{(a,\, b)}_m(H)$ is formed by finite
linear combinations
$$ 
\sigma=\sum_{\bar{x}\in \bPP(H)} \sigma_{\bar{x}}\bar{x}, 
$$
where $\sigma_{\bar{x}}\in\F$ and $\mu(\bar{x})=m$ and
$a<\CA_H(\bx)<b$. (Since $M$ is monotone we do not need to take a
completion here.) The Floer differential ``counts'' the
$L^2$-anti-gradient trajectories of the action functional and the
resulting homology, the \emph{filtered Floer homology} of $H$, is
denoted by $\HF^{(a,\, b)}_*(H)$ and by $\HF_*(H)$ when
$(a,\,b)=(-\infty,\,\infty)$. The degree of a class
$\alpha\in \HF^{(a,\, b)}_*(H)$ is denoted by~$|\alpha|$.

The definition of the Floer homology extends by continuity to all, not
necessarily non-degenerate, Hamiltonians.  Namely, let $H$ be an
arbitrary (one-periodic in time) Hamiltonian on $M$ and let the
end-points $a$ and $b$ of the action interval be outside $\CS(H)$. By
definition, we set
$$
\HF^{(a,\, b)}_*(H)=\HF^{(a,\, b)}_*(\tH),
$$
where $\tH$ is a non-degenerate, small perturbation of $H$. It is easy
to see that the right-hand side is independent of $\tH$ once $\tH$ is
sufficiently $C^2$-close to $H$.

The notion of \emph{local Floer homology} goes back to the original
work of Floer and it has been revisited a number of times since
then. Here we only briefly recall the definition following mainly
\cite{Gi:CC, GG:gaps, GG:gap} where the reader can find a much more
thorough discussion and further references.

Let $x$ be an isolated one-periodic orbit of a Hamiltonian
$H\colon S^1\times M\to \R$. The local Floer homology $\HF(x)$ is the
homology of the Floer complex generated by the orbits $x$ splits into
under a $C^2$-small non-degenerate perturbation of $H$ near $x$. This
homology is well-defined, i.e., independent of the perturbation. The
homology $\HF(x)$ is only relatively graded and to fix an absolute
grading one can pick a trivialization of $TM$ along $x$. This can be
done by using, for instance, a capping of $x$ and in this case we
write $\HF_*(\bx)$.  For instance, if $x$ is non-degenerate and
$\MUCZ(\bx)=m$, we have $\HF_l(\bx)=\F$ when $l=m$ and $\HF_l(\bx)=0$
otherwise. Moreover, $\HF(\bx)$ is supported in the interval
$[\hmu(\bx)-n,\, \hmu(\bx)+n]\subset \R$, i.e., $\HF_l(\bx)=0$ when
$l\in\Z$ is outside this interval, \cite{GG:gap}. The construction of
$\HF(x)$ is local: it requires $H$ to be defined only in a
neighborhood of $x$.

The total Floer homology is independent of the Hamiltonian and, up to
a shift of the grading and the effect of recapping, is isomorphic to
the homology of $M$. More precisely, we have
$$
\HF_*(H)\cong \H_ {*+n}(M;\F)\otimes \Lambda
$$
as graded $\Lambda$-modules, where $\Lambda$ is a suitably defined
Novikov ring. For instance, let $H$ be a Hamiltonian on $\CP^n$. Then
$\HF_m(H)=\F$ when $m$ has the same parity as $n$ and $\HF_m(H)=0$
otherwise. To see this, one can just take a non-degenerate quadratic
Hamiltonian as $H$ and observe that all fixed points of $\varphi_H$
are elliptic and hence their indices have the same parity as $n$. (In
particular, the Floer differential vanishes.)

The machinery of Hamiltonian \emph{spectral invariants} was developed
in its present Floer--theoretic form in \cite{Oh:constr, Sc}, although
the first versions of the theory go back to \cite{HZ, Vi:gen}. Action
carriers were introduced in \cite{GG:gaps} and then studied in
\cite{CGG, GG:nm}.

Let $H$ be a Hamiltonian on a closed monotone (or even rational)
symplectic manifold $M^{2n}$. The \emph{spectral invariant} or
\emph{action selector} $\s_\alpha$ associated with a class
$\alpha\in \HF_*(H)$ is defined as
$$
\s_\alpha(H)= \inf\{ a\in \R\ssminus \CS(H)\mid \alpha \in \im(i^a)\}
=\inf\{ a\in \R\ssminus \CS(H)\mid j^a\left( \alpha \right)=0\},
$$
where $i^a\colon \HF_*^{(-\infty,\,a)}(H)\to \HF_*(H)$ and
$j^a\colon \HF_*(H)\to\HF_*^{(a,\, \infty)}(H)$ are the natural
``inclusion'' and ``quotient'' maps. It is easy to see that
$\s_\alpha(H)>-\infty$ when $\alpha\neq 0$ and one can show that
$\s_\alpha(H)\in S(H)$. In other words, there exists a capped
one-periodic orbit $\bx$ of $H$ such that
$\s_\alpha(H)=\CA_H(\bx)$. As an immediate consequence of the
definition,
$$
\s_\alpha(H+a(t))=\s_\alpha(H)+\int_{S^1}a(t)\,dt,
$$
where $a\colon S^1\to\R$.

Spectral invariants have several important properties.  For instance,
the function $\s_\alpha$ is homotopy invariant:
$\s_\alpha(H)=\s_\alpha(K)$ when $\varphi_H=\varphi_K$ in $\tHam(M)$
and $H$ and $K$ have the same mean value. Furthermore, it is
sub-additive, monotone and Lipschitz in the $C^0$-topology as a
function of $H$.

When $H$ is non-degenerate, the action selector can also be evaluated
as
$$
\s_\alpha(H)=\inf_{[\sigma]=\alpha}\s_\sigma(H),
$$
where for a Floer chain $\sigma$ (in this case a cycle), we set
\begin{equation}
\label{eq:cycle-action}
\s_\sigma(H)
=\max\big\{\CA_H(\bx)\,\big|\, \sigma_{\bx} \neq 0\big\}\text{ for }
\sigma=\sum\sigma_{\bx} \bx\in\CF_{|\alpha|}(H).
\end{equation}
The infimum here is obviously attained, since $M$ is rational and thus
$\CS(H)$ is closed.  Hence there exists a cycle
$\sigma=\sum\sigma_{\bx} \bx\in\CF_{|\alpha|}(H)$, representing the
class $\alpha$, such that $\s_\alpha(H)=\CA_H(\bx)$ for an orbit $\bx$
entering $\sigma$. In other words, $\bx$ maximizes the action on
$\sigma$ and the cycle $\sigma$ minimizes the action over all cycles
in the homology class $\alpha$. Such an orbit $\bx$ is called a
\emph{carrier} of the action selector. This is a stronger requirement
than just that $\s_\alpha(H)=\CA_H(\bx)$ and $\mu(\bx)=|\alpha|$. When
$H$ is possibly degenerate, a capped one-periodic orbit $\bx$ of $H$
is a carrier of the action selector if there exists a sequence of
$C^2$-small, non-degenerate perturbations $\tH_i\to H$ such that one
of the capped orbits $\bx$ splits into is a carrier for $\tH_i$. An
orbit (without capping) is said to be a carrier if it turns into one
for a suitable choice of capping.

It is easy to see that a carrier necessarily exists, but in general is
not unique. However, it becomes unique when all one-periodic orbits of
$H$ have distinct action values.

As consequence of the definition of the carrier and continuity of the
action and the mean index, we have
$$
\s_\alpha(H) =
\CA_H(\bx)\text{ and } \big|\hmu(\bx)-|\alpha|\big| \leq n,
$$
and the inequality is strict when $x$ is weakly non-degenerate.
Furthermore, a carrier $\bx$ for $\s_\alpha$ is in some sense
homologically essential. Namely, $\HF_{|\alpha|}(\bx)\neq 0$, provided
that all one-periodic orbits of $H$ are isolated; cf. \cite[Lemma
3.2]{GG:nm}.

\section{Background results on pseudo-rotations}
\label{sec:background}

\subsection{Action and index spectra}
\label{sec:spectra}
In this section, we briefly recall several symplectic topological
results on pseudo-rotations of projective spaces, essential for our
purposes. A much more detailed treatment can be found in \cite[Sect.\
3]{GG:PR}.

Let $\varphi=\varphi_H$ be a pseudo-rotation of $\CP^n$, which we do
not assume to be non-degenerate. Denote by $\alpha_l$ the generator in
$\HF_{2l-n}(H)=\F$, $l\in\Z$, and let $\bx_{l}\in\bPP_1(H)$ be an
action carrier for $\alpha_l$. We will write
$\s_l:=\s_{\alpha_l}$. Then, in particular,
$$
\s_{l}(H)=\CA_H(\bx_l)\textrm{ and } \HF_{2l-n}(\bx_l)\neq 0,
$$
and hence $\HF(x)\neq 0$ for all $x\in\PP(H)$. When $\varphi$ is
non-degenerate, we have
\begin{equation}
  \label{eq:index-bxi}
  \mu(\bx_l)=2l-n.
\end{equation}

\begin{Theorem}[Lusternik--Schnirelmann Inequalities, \cite{GG:gaps}]
\label{thm:bijection}
For every $l\in \Z$ the action carrier $\bx_l$ is unique and the
resulting map
\begin{equation}
\label{eq:map-x_i}
\Z\to\bPP_1(H), \quad l\mapsto\bx_l
\end{equation}
is a bijection. Furthermore, the map
\begin{equation}
\label{eq:map-s}
\Z\to\CS(H), \quad l\mapsto \s_{l}(H)=\CA_H(\bx_l)
\end{equation}
is strictly monotone, i.e., $l>l'$ if and only if
$\CA_H(\bx_l)>\CA_H(\bx_{l'})$.
\end{Theorem}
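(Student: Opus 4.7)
My approach uses the Lusternik--Schnirelmann-type structure of Hamiltonian spectral invariants on $\CP^n$ together with the finiteness of the orbit set built into the pseudo-rotation hypothesis. The plan has three phases: non-strict monotonicity of the selectors $\s_l(H)$, strict monotonicity together with uniqueness of the carrier, and finally surjectivity of the assignment $l\mapsto \bx_l$.

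For non-strict monotonicity $\s_l(H)\leq \s_{l+1}(H)$, I would invoke the triangle inequality $\s_{\alpha\star\beta}(H)\leq \s_\alpha(H)+\s_\beta(0)$ applied to the quantum relation $\alpha_{l+1}=u\star\alpha_l$ in the quantum homology of $\CP^n$, where $u$ is the hyperplane class. With the standard normalization $\s_u(0)=0$ and its counterpart involving $u^n$ going in the opposite direction (recovering $\alpha_l$ from $\alpha_{l+1}$ up to a recapping shift controlled by $\lambda/(n+1)$), one pins down the monotone direction. For non-degenerate $H$ one can also argue directly at the chain level: a cycle representative of $\alpha_{l+1}$ of minimal action forces the existence of a cycle representative of $\alpha_l$ of no larger action via the quantum cap product. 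The degenerate case is then handled by the standard continuity/approximation procedure described in Section~\ref{sec:FH}.

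The central step is strict monotonicity, simultaneously combined with uniqueness of the carrier. Assume $\s_l(H)=\s_{l+1}(H)=c$ for some $l$, or that $\s_l(H)$ admits two distinct capped carriers. In either case, equality in the L--S triangle inequality, combined with the quantum module structure on $\HF_*(H)$, forces a non-trivial Floer connecting cylinder at the critical level $c$ linking these carriers; a rigidity argument then produces an additional capped one-periodic orbit of $\CA_H$ at that level, which must project to a simple orbit of $\varphi_H$ outside the finite list. This contradicts the pseudo-rotation hypothesis that $\varphi_H$ has only $n+1$ fixed points. Thus $\s_l(H)<\s_{l+1}(H)$ and the carrier $\bx_l$ is unique for every $l$.

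Bijectivity of $l\mapsto \bx_l$ then follows by a counting argument. Injectivity is automatic, since $\mu(\bx_l)=2l-n$ depends on $l$. For surjectivity, note that $\bPP_1(H)$ decomposes into $n+1$ recapping families, one per simple fixed point, and that recapping by a sphere of minimal Chern number $N=n+1$ shifts the Conley--Zehnder index by $2(n+1)$ and the action by $\lambda$. Since the Floer homology of $\varphi_H$ is supported in degrees of parity $n$, all capped one-periodic orbits likewise carry indices of parity $n$; thus the values $2l-n$, $l\in\Z$, exhaust exactly the admissible indices. Together with the arithmetic progression $\s_{l+(n+1)}(H)=\s_l(H)+\lambda$ coming from recapping and the strict monotonicity just established, this forces $l\mapsto \bx_l$ to be a bijection and the action-ordered list of capped orbits to match the index-ordered list. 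The main obstacle is the strict monotonicity step: ruling out $\s_l(H)=\s_{l+1}(H)$ requires carefully extracting an extra critical point of $\CA_H$ from the degenerate case of the L--S inequality in the filtered Floer complex, which is where the pseudo-rotation hypothesis is genuinely used.
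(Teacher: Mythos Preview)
The paper does not actually prove Theorem~\ref{thm:bijection}: it is quoted as a background result from \cite{GG:gaps}, and the only comment on the argument is that it ``relies on a version of Lusternik--Schnirelmann theory for action selectors.'' So your overall strategy---sub-additivity of spectral invariants under the quantum product, followed by a strictness argument exploiting that a pseudo-rotation has only finitely many periodic points---is the intended one, and your Phase~1 and Phase~3 are essentially correct (in Phase~3 the crucial observation you use implicitly is that $\s_0<\s_1<\cdots<\s_n<\s_0+\lambda$ forces the $n{+}1$ carriers $\bx_0,\ldots,\bx_n$ to lie over $n{+}1$ \emph{distinct} simple fixed points, after which recapping gives surjectivity).

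The genuine gap is in Phase~2. Your description of the mechanism---``a non-trivial Floer connecting cylinder at the critical level $c$'' and ``a rigidity argument then produces an additional capped one-periodic orbit \ldots\ outside the finite list''---is not how the Lusternik--Schnirelmann argument runs, and as stated it cannot work: Floer cylinders strictly decrease action, so there are no nonconstant cylinders ``at level $c$'', and the contradiction is not that a \emph{new} simple orbit appears. What actually happens when $\s_l(H)=\s_{l+1}(H)$ is that the cap product by the hyperplane class $u$ must act nontrivially \emph{within} the action slice at level $c$; concretely, equality in the sub-additivity inequality forces the set of capped orbits with action $c$ to carry enough local Floer homology to support a nontrivial cap with $u$. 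For a Hamiltonian with isolated one-periodic orbits (which is automatic for a pseudo-rotation) this is impossible, because local Floer homology of an isolated orbit is a local invariant on which the global class $u$ acts trivially. That is the contradiction, and it simultaneously yields uniqueness of the carrier. You should replace the ``rigidity/extra orbit'' sentence by this statement (or by a direct citation of the strict LS inequality for spectral invariants under cap product from \cite{GG:gaps}); as written, the heart of the proof is missing.
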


One important consequence of the theorem is that distinct capped
one-periodic orbits of $\varphi_H$ necessarily have different actions.

The proof of Theorem \ref{thm:bijection} relies on a version of
Lusternik--Schnirelmann theory for action selectors. The theorem
allows us to extend the notion of the Conley--Zehnder index to capped
one-periodic orbits $\bx$ of degenerate pseudo-rotations by setting
$\mu(\bx)=2l-n$ for $\bx=\bx_l$. We will call $\mu(\bx)$ the
\emph{LS-index}. When $\bx$ is non-degenerate this is just the
ordinary Conley--Zehnder index. Without non-degeneracy, the LS-index
is a global rather than local invariant. However, it has many of the
expected properties of the Conley--Zehnder index, e.g.,
$\HF_{\mu(\bx)}(\bx)\neq 0$, the LS-index satisfies \eqref{eq:mean-cz}
and, when $n=1$, $\HF(\bx)$ is concentrated in only one degree which
is $\mu(\bx)$, \cite{GG:gap}. With this notion in mind, Theorem
\ref{thm:bijection} can be rephrased as that the ordering of
$\bPP_1(H)$ by the LS-index agrees with that by the action.

\begin{Theorem}[Action--Index Resonance Relations, \cite{GG:gaps}]
\label{thm:act-index}
For every $\bx\in\bPP_1(H)$, we have
\begin{equation}
\label{eq:act-index2}
\CA_H(\bx)=\frac{\lambda}{2(n+1)}\hmu(\bx)+\const,
\end{equation}
where $\const$ is independent of $x$.
\end{Theorem}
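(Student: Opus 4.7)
The plan is to introduce the recapping-invariant quantity
\[
f(\bx) := \CA_H(\bx) - \frac{\lambda}{2(n+1)}\hmu(\bx)
\]
and show that it is constant on $\bPP_1(H)$. First I would check recapping invariance: changing the capping by a class $B\in\pi_2(\CP^n)$ shifts the action by $-\omega(B)\in\lambda\Z$ and the mean index by $-2c_1(B)\in 2(n+1)\Z$, and these shifts cancel exactly thanks to the ratio $\lambda/(2(n+1))$. Thus $f$ descends to a function on $\PP_1(H)$, which is the set of $n+1$ fixed points $\{x_0,\ldots,x_n\}$ of the pseudo-rotation, and the task reduces to showing the $n+1$ numbers $f(x_i)$ all coincide.

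The key idea is to apply the same framework to every iterate $H^{\nat k}$, $k\in\N$, and extract a $k$-independent constraint. Since all periodic orbits of $\varphi_H$ are fixed points, $\varphi_H^k=\varphi_{H^{\nat k}}$ has exactly the same $n+1$ fixed points, so $H^{\nat k}$ is itself a pseudo-rotation and Theorem~\ref{thm:bijection} applies to it. Let $\bx_l^{(k)}$ denote the unique carrier of $c_l^{(k)}:=\s_l(H^{\nat k})$, and write $\bx_l^{(k)}=\overline{x_{i(l,k)}^k}$ with $i(l,k)\in\{0,\ldots,n\}$. Because $\CA_{H^{\nat k}}$ and $\hmu$ are homogeneous under iteration, while $f$ is recapping invariant, a direct computation (using that on $\CP^n$ one has $\omega(B)\in\lambda\Z$ and $c_1(B)\in(n+1)\Z$ paired together) yields
\[
c_l^{(k)}-\frac{\lambda}{2(n+1)}\hmu\big(\bx_l^{(k)}\big)=k\,f\big(x_{i(l,k)}\big).
\]

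The heart of the argument is a $k$-independent bound on the range of the right-hand side as $l$ varies over $\{0,1,\ldots,n\}$. Recapping periodicity gives $c_{l+n+1}^{(k)}=c_l^{(k)}+\lambda$, and the strict monotonicity part of Theorem~\ref{thm:bijection} then forces $c_0^{(k)}<c_1^{(k)}<\cdots<c_n^{(k)}<c_0^{(k)}+\lambda$, an interval of length strictly less than $\lambda$. Within such a range no two distinct carriers can be cappings of the same $x_i^k$ (they would differ in action by a nonzero integer multiple of $\lambda$), so $l\mapsto i(l,k)$ is a bijection from $\{0,\ldots,n\}$ onto itself and all $n+1$ values $k f(x_i)$ appear on the right. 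Combining this action bound with the mean index estimate $\hmu(\bx_l^{(k)})\in[2l-2n,2l]\subseteq[-2n,2n]$ from \eqref{eq:mean-cz}, the triangle inequality yields
\[
\max_i k f(x_i) - \min_i k f(x_i) \;\leq\; \lambda + \frac{2n\lambda}{n+1} \;=\; \frac{(3n+1)\lambda}{n+1},
\]
a bound independent of $k$. Dividing by $k$ and letting $k\to\infty$ forces all $f(x_i)$ to coincide, which is the desired resonance relation.

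The main obstacle, in my view, is not any individual calculation but the conceptual step of marshalling iteration against the LS-inequality: the mean-index bound $|\hmu-\mu|\leq n$ combined with recapping periodicity gives a ``pincer'' in which the same quantity is rigidly bounded (inside one $c_l^{(k)}$-fundamental domain) and linearly amplified by $k$, and the two can only be reconciled by exact equality of the $f(x_i)$. A secondary technical point is verifying that Theorem~\ref{thm:bijection} is genuinely available for every $H^{\nat k}$, which reduces to the observation above that pseudo-rotations of $\CP^n$ are closed under iteration.
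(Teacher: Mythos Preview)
The paper does not actually prove this theorem; it is quoted as a background result with attribution to \cite{GG:gaps}, so there is no in-paper argument to compare against.

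That said, your proof is correct and is essentially the argument from \cite{GG:gaps}. The recapping-invariant ``augmented action'' $f$, the iteration trick, the Lusternik--Schnirelmann inequalities confining $c_0^{(k)},\ldots,c_n^{(k)}$ to a window of width $<\lambda$, and the mean-index window from \eqref{eq:mean-cz} combine exactly as you describe, and dividing by $k$ forces $f$ to be constant. Two minor remarks. First, the bound $\big|\hmu(\bx_l^{(k)})-(2l-n)\big|\leq n$ is valid even when the iterate is degenerate: this is recorded in Section~\ref{sec:FH}, where it is noted that any carrier $\bx$ of $\s_\alpha$ satisfies $\big|\hmu(\bx)-|\alpha|\big|\leq n$ (via non-vanishing of local Floer homology), so you need not invoke \eqref{eq:mean-cz} directly. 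Second, the bijectivity of $l\mapsto i(l,k)$ is already part of Theorem~\ref{thm:bijection} applied to $H^{\nat k}$ (the map \eqref{eq:map-x_i} is a bijection onto $\bPP_1(H^{\nat k})$), so your separate derivation from the action range, while correct, is not needed.
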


This theorem has been extended to some other symplectic manifolds and
a broader class of Hamiltonian diffeomorphisms; see \cite{CGG}. It
also has an analog for Reeb flows, \cite[Sect.\ 6.1.2]{GG:convex}.

The \emph{marked action spectrum} $\cCS(H)$ is, by definition, the
monotone bijection
$$
\cCS\colon
\Z\stackrel{\eqref{eq:map-x_i}}{\longleftrightarrow}\bPP_1(H)
\stackrel{\eqref{eq:map-s}}{\longrightarrow}\CS(H),
$$
i.e., $\cCS(H)$ is simply the spectrum $\CS(H)$ with its points
labeled by $\Z$ (essentially the indices) or, equivalently, by
$\bPP_1(H)$. In a similar vein, the \emph{marked index spectrum}
$\cCSI(\varphi)$ is the map
$$
\cCSI\colon
\Z\stackrel{\eqref{eq:map-x_i}}{\longleftrightarrow}\bPP_1(H)
\longrightarrow\CSI(\varphi),
$$
where $\varphi=\varphi_H$, which is also a monotone bijection, and
$$
\CSI(\varphi)=\{\hmu(\bx)\mid \bx\in\bPP_1(H)\}
$$ 
is the \emph{mean index spectrum} of $H$ and the second arrow is the
map $\bx\mapsto \hmu(\bx)$.  Then \eqref{eq:act-index2} can be
rephrased as
$$
\cCS(H)=\frac{\lambda}{2(n+1)}\cCSI(\varphi)+\const,
$$
i.e., the action spectrum and the index spectrum agree up to a factor
and a shift. The factor can be made equal 1 by scaling $\omega$, and
the shift can be made zero by adding a constant to $H$. Then
\begin{equation}
\label{eq:equal-spectra}
\cCS(H)=\cCSI(\varphi).
\end{equation}

Finally, we have a different type of resonance relations involving
only the indices. To state the result, recall that for an un-capped
one-periodic orbit $x\in\PP_1(H)$ the mean index is well defined
modulo $2(n+1)=2N$, i.e., $\hmu(x)\in S_{2N}^1=\R/2(n+1)\Z$.  Let
$x_0,\ldots, x_n$ be the fixed points of a pseudo-rotation $\varphi_H$
of $\CP^n$. Then, as was shown in \cite{GK}, for some non-zero vector
$\vec{v}=(v_0,\ldots,v_n)\in \Z^{n+1}$
, we have
\begin{equation}
\label{eq:RR}
\sum v_i\hmu(x_i)= 0 \textrm{ in } \R/2(n+1)\Z.
\end{equation}
In other words, the closed subgroup $\Gamma\subset \T^{n+1}$
topologically generated by the \emph{mean index vector}
\begin{equation}
\label{eq:vDelta}
\vDelta=\vDelta(\varphi_H)
:=\big(\hmu(x_0),\ldots,\hmu(x_n)\big)\in
\T^{n+1}=\R^{n+1}/2(n+1)\Z^{n+1}
\end{equation}
has positive codimension. Moreover, the codimension is equal to the
number of linearly independent resonances, i.e., the rank of the
subgroup $\CR\subset\Z^{n+1}$ formed by all resonances $\vec{v}$; see
\cite{GK}.

Clearly, $\CR$ depends on $\varphi_H$. However, conjecturally, the
resonance relation
\begin{equation}
\label{eq:sum-RR}
\sum\hmu(x_i)=0 \textrm{ in } \R/2(n+1)\Z
\end{equation}
is universal, i.e., satisfied for all pseudo-rotations. (Up to a
factor this is the only possible universal resonance relation; for,
any other relation breaks down for a suitably chosen rotation; see
Section \ref{sec:R}.) For $\CP^1$ this conjecture is known to hold;
see the discussion in Section \ref{sec:2D} below. Theorem
\ref{thm:balanced-4D-intro} establishes this conjecture for strongly
non-degenerate pseudo-rotations of $\CP^2$.

\subsection{Pseudo-rotations of $S^2$}
\label{sec:2D}
To illustrate our approach, we will use now the results quoted in
Section \ref{sec:spectra} to study pseudo-rotations in dimension two.

\begin{Proposition}
\label{prop:S^2-nondeg}
Every pseudo-rotation $\varphi$ of $S^2$ is strongly non-degenerate
and its fixed points are elliptic. Furthermore, in the notation from
Section \ref{sec:spectra},
\begin{equation}
\label{eq:sum-RR-2D}
\hmu(\bx_0)+\hmu(\bx_1)=0.
\end{equation}
\end{Proposition}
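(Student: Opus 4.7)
The strategy is to deduce all three claims from classical results on area-preserving diffeomorphisms of $S^2$, together with the action--index resonance of Theorem \ref{thm:act-index} and a time-reversal symmetry. I would organize the argument in two main steps, non-degeneracy/ellipticity and the resonance \eqref{eq:sum-RR-2D}.

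\emph{Strong non-degeneracy and ellipticity.} The starting point is Franks' theorem: an area-preserving homeomorphism of $S^2$ with only finitely many periodic orbits has at most two, and both are fixed. Applied to every iterate $\varphi^k$, this forces $\varphi^k$ to have exactly the two fixed points $x_0,x_1$ of $\varphi$ and nothing else, so no bifurcation of periodic points can occur as $k$ varies. A hyperbolic fixed point of an area-preserving diffeomorphism of $S^2$ would produce additional periodic orbits via Franks--Handel / Le Calvez theory (or, in this symplectic setting, Bramham--Hofer's finite energy foliations), contradicting the pseudo-rotation property. An elliptic fixed point with rational rotation angle would yield a fully degenerate iterate, whose local Floer homology is spread across several degrees and, by a standard Poincar\'e--Birkhoff/bifurcation argument, gives extra fixed points of $\varphi^k$. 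Both scenarios are excluded, leaving $x_0$ and $x_1$ elliptic with irrational angles, hence $\varphi$ strongly non-degenerate.

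\emph{The relation $\hmu(\bx_0)+\hmu(\bx_1)=0$.} I would exploit time-reversal. Setting $\bar H(t,\cdot):=-H(1-t,\cdot)$, one has $\varphi_{\bar H}=\varphi^{-1}$ in $\tHam(\CP^1)$, and the involution $(x,A)\mapsto(x(1-\cdot),\bar A)$ is a bijection between capped one-periodic orbits of $H$ and of $\bar H$ reversing the signs of action, mean index, and Conley--Zehnder index. By the uniqueness of carriers (Theorem \ref{thm:bijection}), this involution sends the carrier $\bx_l^H$ of CZ-index $2l-1$ to the carrier $\bx_{1-l}^{\bar H}$ of CZ-index $-(2l-1)=2(1-l)-1$, so that $\hmu_{\bar H}(\bx_l^{\bar H})=-\hmu_H(\bx_{1-l}^H)$. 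Summing over $l\in\{0,1\}$ gives that the quantity $\hmu(\bx_0)+\hmu(\bx_1)$ changes sign under $\varphi\leftrightarrow\varphi^{-1}$. Applying Theorem \ref{thm:act-index} to both $H$ and $\bar H$ after fixing the mean-zero normalization $\int_{S^1\times S^2}H\,dt\wedge\omega=0$ (which is preserved by time-reversal), the additive constants in \eqref{eq:act-index2} vanish for both Hamiltonians; the relation $\CA_{\bar H}(\bar{\bx})=-\CA_H(\bx)$ then matches $\frac{\lambda}{4}\hmu_{\bar H}(\bar\bx)=-\frac{\lambda}{4}\hmu_H(\bx)$, and summing over the two carriers forces $\hmu(\bx_0^H)+\hmu(\bx_1^H)=0$.

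\emph{Main obstacle.} The delicate point is to verify that the additive constant in Theorem \ref{thm:act-index} is consistent between $H$ and $\bar H$ under the chosen normalization, so that the symmetry closes cleanly. If this direct bookkeeping proves fragile, a robust fallback is a deformation argument: connect $\varphi$ through strongly non-degenerate pseudo-rotations to a nearby true rotation $\varphi_Q$ in $\tHam(\CP^1)$, for which the sum $\hmu(\bx_0)+\hmu(\bx_1)$ is manifestly zero by direct computation (the two fixed points of a mean-zero quadratic rotation on $\CP^1$ have opposite mean indices). Since this sum is continuous in $\varphi$ and, by the general resonance relation \eqref{eq:RR} together with the index constraint $\mu(\bx_l)=2l-1$, takes values in a discrete subset of $\R$, it is constant along the path and therefore vanishes identically.
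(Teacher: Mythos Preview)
Your argument for strong non-degeneracy and ellipticity invokes deep external results from two-dimensional dynamics; this is the ``standard'' route the paper alludes to but does not follow. The paper instead gives a short self-contained symplectic proof: a hyperbolic or degenerate fixed point in dimension two has integral mean index, and after passing to an iterate and using the resonance relation \eqref{eq:RR} both mean indices become $0\bmod 4$; then Theorem~\ref{thm:act-index} forces the two capped orbits to have equal actions, contradicting Theorem~\ref{thm:bijection}. (Incidentally, your degenerate-case reasoning is imprecise: in dimension two the local Floer homology of an isolated fixed point is concentrated in a \emph{single} degree, so ``spread across several degrees'' is not what happens.)

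The genuine gap is in your proof of \eqref{eq:sum-RR-2D}. The time-reversal argument is circular. Time reversal negates both action and mean index simultaneously, so substituting $\CA_{\bar H}(\bar\bx)=-\CA_H(\bx)$ and $\hmu_{\bar H}(\bar\bx)=-\hmu_H(\bx)$ into the action--index relation for $\bar H$ simply reproduces the relation for $H$; no new constraint on $\hmu(\bx_0)+\hmu(\bx_1)$ emerges. The observation that this sum ``changes sign under $\varphi\leftrightarrow\varphi^{-1}$'' is a tautology about how mean indices transform, not an equation determining the value. Your claim that the additive constant in \eqref{eq:act-index2} vanishes under mean-zero normalization of $H$ is also unjustified (the paper only asserts the constant can be absorbed by \emph{some} shift of $H$). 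The fallback deformation argument assumes that $\varphi$ can be connected to a true rotation through strongly non-degenerate pseudo-rotations; nothing is known about the connectedness of the space of pseudo-rotations, so this is essentially begging the question.

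The paper's proof of \eqref{eq:sum-RR-2D} is elementary and combinatorial. By Theorem~\ref{thm:bijection} the indices $\mu(\bx_0^k)$ and $\mu(\bx_1^k)$ are distinct modulo $4$ for every $k$, hence $\mu(\bx_1^k)-\mu(\bx_0^k)\equiv 2\bmod 4$. Since $\hmu(\bx_0)\in(-2,0)$ and $\hmu(\bx_1)\in(0,2)$, the jumps $\mu(\bx_i^{k+1})-\mu(\bx_i^k)$ lie in $\{0,-2\}$ for $i=0$ and in $\{0,2\}$ for $i=1$; the $\bmod\,4$ constraint forces them to be exactly opposite at every step, so $\mu(\bx_1^k)=-\mu(\bx_0^k)$ for all $k$, and dividing by $k$ and passing to the limit yields \eqref{eq:sum-RR-2D}.
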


The resonance relation \eqref{eq:sum-RR-2D}, which holds in $\R$ and
not just modulo 4, asserts, roughly speaking, that $D\varphi$ rotates
the tangent spaces at the fixed points by the same angle but in
opposite directions. In particular, \eqref{eq:sum-RR} is satisfied and
there exists a unique (up to conjugation) true rotation $R_\varphi$ of
$S^2$ such that $\cCS(\varphi)=\cCS(R_\varphi)$ and
$\cCSI(\varphi)=\cCSI(R_\varphi)$. The first part of the proposition
is a standard, although ultimately highly non-trivial, result in
two-dimensional dynamics (see \cite{Fr92}) and \eqref{eq:sum-RR-2D}
readily follows from the Poincar\'e--Birkhoff theorem, \cite[Appendix
A.2]{Br:Annals}; see also \cite{CKRTZ} for a different approach based
on \eqref{eq:RR}. The proof of this part given below is taken from
\cite{GG:PR}.

\begin{proof} Arguing by contradiction, assume that $\varphi$ is a
  pseudo-rotation of $S^2$ and some iterate of $\varphi$ is degenerate
  or that one of its fixed points is hyperbolic. In dimension two, a
  hyperbolic or degenerate fixed point necessarily has integral mean
  index. Hence, replacing $\varphi$ by a sufficiently large iterate if
  necessary and using \eqref{eq:RR}, we may assume that both fixed
  points $x_0$ and $x_1$ have mean index equal to zero modulo
  $4=2(n+1)$. Let us scale the symplectic structure and adjust the
  Hamiltonian so that $\cCS(H)=\cCSI(\varphi)$. Then, by
  \eqref{eq:act-index2}, for suitable cappings of $x_0$ and $x_1$
  these orbits have equal actions, which is impossible by Theorem
  \ref{thm:bijection}.

  Let us now turn to \eqref{eq:sum-RR-2D}. By Theorem
  \ref{thm:bijection}, for any iteration $k$ the orbits $x_0^k$ and
  $x_1^k$ with any cappings have distinct Conley--Zehnder indices. In
  particular,
  \begin{equation}
    \label{eq:change}
\mu\big(\bx_1^k\big)-\mu\big(\bx_0^k\big)\equiv 2\mod 4=2(n+1).
\end{equation}
Note also that, by \eqref{eq:mean-cz}, $-2<\hmu(\bx_0)<0$ and
$2>\hmu(\bx_1)>0$, since $\mu(\bx_0)=-1$ and $\mu(\bx_1)=1$ by
\eqref{eq:index-bxi}.

Therefore, again by \eqref{eq:mean-cz},
$\mu\big(\bx_1^{k+1}\big)-\mu\big(\bx_1^k\big)$ is either 0 or 2 and
$\mu(\bx_0^{k+1})-\mu(\bx_0^k)$ is either 0 or -2. Combining these
facts it is not hard to see that
$$
\mu\big(\bx_1^{k+1}\big)-\mu\big(\bx_1^k\big)=
-\big(\mu\big(\bx_0^{k+1}\big)-\mu\big(\bx_0^k\big)\big)
$$
for all $k\in\N$. (Otherwise, at the first moment when one index
changes and the other does not, \eqref{eq:change} is violated.) As a
consequence,
$$
\mu\big(\bx_1^k\big)=-\mu\big(\bx_0^k\big)
$$
for all $k\in\N$. Since
$\hmu(\bx_i)=\lim_{k\to\infty}\mu\big(\bx_i^k\big)/k$, we conclude
that $\hmu(\bx_1)=-\hmu(\bx_0)$.
  \end{proof}

\section{Index divisibility}
\label{sec:index}
In this section we state the main index theory result used in our
comparison of true rotations and pseudo-rotations. Let
$\Phi\in\tSp(2m)$ be a path parametrized by $[0,1]$, starting at the
identity and taken up to homotopy with fixed endpoints. Denote by
$\sigma(\Phi)$ the unit spectrum (the collection of unit eigenvalues)
of $\Phi(1)$. In general, \emph{we will refer to the eigenvalues of
  $\Phi(1)$ as the eigenvalues of $\Phi$ unless specifically stated
  otherwise}. Throughout this section we assume that $\Phi$ is
strongly non-degenerate, i.e., all iterates $\Phi^k$ are
non-degenerate or, equivalently, that $\sigma(\Phi)$ does not contain
any root of unity. We denote the part of $\sigma(\Phi)$ lying in the
upper half circle by $\sigma_+(\Phi)$ and refer to it as the positive
unit spectrum.

Our goal is to associate to $\Phi$ several numerical invariants which
determine its index behavior under iterations. For the sake of
simplicity let us first assume that all unit eigenvalues of $\Phi$ are
semi-simple although not necessarily distinct. Let us write $\Phi$ as
a product of a loop $\phi$ with the direct sum
$\Phi_{h}\oplus \Phi_{-h}\oplus \Phi_e$. Here $\Phi_h$ is hyperbolic
with complex or positive real eigenvalues and zero mean index. (We
emphasize that by our convention these are in fact the eigenvalues of
$\Phi_h(1)$.) An equivalent condition is that $\Phi_h(t)$ is
hyperbolic with complex or positive real eigenvalues for all
$t\in (0,\,1]$.  The second term $\Phi_{-h}$ is hyperbolic with
negative real eigenvalues. As an element of the universal covering it
is specified by that its mean index is equal to half of the dimension
of its domain or equivalently by that it is connected to $I$ by the
counterclockwise rotation by $\pi$ and a hyperbolic transformation. We
set
\[
\mult_{-1}(\Phi):=\hmu(\Phi_{-h}) \qquad and \qquad
\loop(\Phi):=\hmu(\phi).
\]

The remaining term $\Phi_e$ is elliptic. Up to conjugation, it
decomposes as a direct sum of ``short'' rotations $R_{\theta}$ by an
angle $\pi\theta\in (-\pi,\pi)$. Thus, in particular,
$\sigma(\Phi)=\sigma(\Phi_e)$ is the collection of eigenvalues
$\exp\big(\pm\pi\sqrt{-1}\theta\big)$. For a rotation $R_{\theta}$ the
eigenvalue $\exp\big(\pi\sqrt{-1}\theta\big)$ is said to be of the
first (Krein) type. In other words, for a counter-clock-wise rotation
we pick up the eigenvalue in the upper half-plane as the first Krein
type, and the eigenvalue in the lower half-plane for a clock-wise
rotation. The first type eigenvalues are actually defined for linear
symplectic maps, i.e., elements of $\Sp(2n)$ rather than elements of
the universal covering $\tSp(2n)$. We will often refer to the first
Krein type eigenvalues of the linearized return map at a periodic
orbit as the \emph{Floquet multipliers} of that orbit.

It is not hard to see that the loop $\phi$ and the paths $\Phi_h$,
$\Phi_{-h}$ and $\Phi_e$ are uniquely determined by $\Phi$ as elements
of the universal covering of the group of linear symplectic
transformations.

The \emph{signature} $\sgn_{\lambda}(\Phi)$ of
$\lambda\in\sigma_+(\Phi)$ is by definition the difference $p-q$,
where $p$ is the number of times $\lambda$ enters $\sigma(\Phi)$ as
the first type eigenvalue and $q$ is the number of times $\bl$ occurs
as the first type eigenvalue. (This is indeed the Krein signature of
the corresponding complex eigenspaces of $\Phi$ or, to be more
precise, of $\Phi(1)$. We refer the reader to, e.g., \cite{Ab, SZ, Lo}
for the definition of the Krein signature.) It is convenient to define
$\sgn_\lambda(\Phi)$ for all $\lambda\in S^1$ by setting
$\sgn_{\lambda}(\Phi):=\sgn_{\bl}(\Phi)$ (no negative sign!)  when
$\lambda\in\sigma(\Phi)\setminus \sigma_+(\Phi)$ and
$\sgn_{\lambda}(\Phi)=0$ when
$\lambda \in S^1 \setminus \sigma(\Phi)$.  Thus we can think of
$\lambda\mapsto \sgn_\lambda(\Phi)$ as a function on $S^1$ which
depends on $\Phi$.

It is not hard to extend these definitions to transformations $\Phi$
with not necessarily semi-simple eigenvalues. To this end, we may, for
instance, connect $\Phi$ to a transformation $\Phi'$ with all unit
eigenvalues semi-simple and the same spectrum as $\Phi$ via a family
of isospectral tranformations, i.e., a family of tranformations with
constant spectrum. We set $\mult_{-1}(\Phi):=\mult_{-1}(\Phi')$ and
$\loop(\Phi):=\loop(\Phi')$ and
$\sgn_{\lambda}(\Phi):=\sgn_{\lambda}(\Phi')$. Then it is easy to show
that these invariants are independent of the choice of $\Phi'$.

\begin{Remark}
  \label{rmk:ss}
  In fact, in the analysis of index behavior under iterations one can
  always use this trick to assume that the map $\Phi(1)$ is
  semi-simple. The index sequences $\mu\big(\Phi^k\big)$ and
  $\mu\big((\Phi')^k\big)$, and likewise the sequences of the mean
  indices $\hmu\big(\Phi^k\big)$ and $\hmu\big((\Phi')^k\big)$, are
  identical.
\end{Remark}

It is clear that these invariants are additive with respect to direct
sum and that $\loop+\mult_{-1}$ and $\sgn_\lambda$ change sign when
the tranformation is replaced by its inverse.

Essentially by definition,
\begin{equation}
\label{eq:mult-index}
\mu(\Phi)=\loop(\Phi)+\mult_{-1}(\Phi)+
\sum_{\lambda\in\sigma_+(\Phi)}\sgn_{\lambda}(\Phi)
\end{equation}
and
\begin{equation}
\label{eq:mult-mean-index}
\hmu(\Phi)=\loop(\Phi)+\mult_{-1}(\Phi)+
\sum_{\lambda\in\sigma_+(\Phi)}\sgn_{\lambda}(\Phi)\theta,
\end{equation}
where $\lambda=\exp\big(\pi\sqrt{-1}\theta\big)$ with
$\theta\in (0,\,1)$.

Our key combinatorial result is the following theorem proved in
Section~\ref{sec:index-div-pf}.

\begin{Theorem}[Index Divisibility]
  \label{thm:index-div}
  Fix $l\in\N$. The following two conditions are equivalent:
  \begin{itemize}
  \item[\rm(a)]
    $2l\mid \big( \mu\big(\Phi^{k+1}\big)-\mu\big(\Phi^k\big) \big)$
    for all $k\in\N$;
  \item[\rm(b)] the following two divisibility requirements are met:
\begin{itemize}
\item[\rm(i)] $2l \mid \big( \loop(\Phi)+\mult_{-1}(\Phi) \big)$,
\item[\rm(ii)] $l \mid \sgn_{\lambda}(\Phi)$ for all
  $\lambda\in\sigma(\Phi)$.
\end{itemize}
\end{itemize}
\end{Theorem}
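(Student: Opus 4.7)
My plan is to translate the iteration behavior of $\mu$ into a combinatorial problem on the torus $\T^r$ and then use equidistribution of the irrational rotation by $\vec\theta=(\theta_1,\ldots,\theta_r)$ to separate the three ingredients of $\mu$ that appear in \eqref{eq:mult-index}. The preparatory step is the iteration formula
\[
\mu\bigl(\Phi^k\bigr) = kL + \sum_{j=1}^{r} s_j\bigl(2\eta_j(k)+1\bigr),
\]
where $L := \loop(\Phi) + \mult_{-1}(\Phi)$, $s_j := \sgn_{\lambda_j}(\Phi)$ for $\lambda_j = \exp(\pi\sqrt{-1}\,\theta_j) \in \sigma_+(\Phi)$ with $\theta_j \in (0,1)$, and $\eta_j(k) := \lfloor k\theta_j/2 \rfloor$. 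Strong non-degeneracy forces $\theta_j \notin \Q$, so $k\theta_j$ never lands on an even integer. The verification applies \eqref{eq:mult-index} factor by factor along $\Phi = \phi \cdot (\Phi_h \oplus \Phi_{-h} \oplus \Phi_e)$: $\phi^k$ contributes $k\loop(\Phi)$; $\Phi_h^k$ contributes nothing; the iterate $\Phi_{-h}^k$ contributes $k\mult_{-1}(\Phi)$ to $\loop + \mult_{-1}$ regardless of the parity of $k$ (for $k$ even the negative-hyperbolic part becomes a loop, for $k$ odd it retains $\mult_{-1}$ and the remainder passes to the loop); and each short rotation has $\mu\bigl(R_{\theta_j}^k\bigr) = 2\eta_j(k)+1$.

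Differencing yields
\[
\mu'_k = L + 2\sum_{j=1}^{r} s_j\, \delta_j(k), \qquad \delta_j(k) := \eta_j(k+1) - \eta_j(k) \in \{0,1\},
\]
so the direction (b) $\Rightarrow$ (a) is immediate: under $2l \mid L$ and $l \mid s_j$ every term on the right is divisible by $2l$. The quantity $\delta_j(k)$ equals $1$ precisely when $k\theta_j$ crosses an even integer on its way to $(k+1)\theta_j$, equivalently when the $j$-th coordinate of $k\vec\theta \in \T^r = \R^r/2\Z^r$ lies in the open arc $(2 - \theta_j,\,2)$; thus $\{\delta_j(k) = 1\}$ is an open condition on $k\vec\theta \in \T^r$.

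For (a) $\Rightarrow$ (b) I would first treat the \emph{generic case} in which $\theta_1,\ldots,\theta_r$ are linearly independent over $\Q$. Then Weyl equidistribution places $\{k\vec\theta\}_{k\in\N}$ equidistributed in $\T^r$. Choosing $k$ with $\delta_j(k) = 0$ for every $j$ gives $\mu'_k = L$, hence $2l \mid L$; fixing $j_0$ and choosing $k$ with $\delta_{j_0}(k) = 1$ and $\delta_j(k) = 0$ for $j \ne j_0$---again an open condition by independence---yields $\mu'_k = L + 2s_{j_0}$, and therefore $l \mid s_{j_0}$.

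The remaining case, in which the $\theta_j$ satisfy nontrivial $\Q$-linear relations, is the main obstacle and the geometric heart of the theorem. Here the orbit $\{k\vec\theta\}$ is confined to a proper closed subgroup $\Gamma \subset \T^r$, and some jumps $\delta_j$ are forced to occur simultaneously, so no single $s_{j_0}$ can be isolated by a one-step jump as above. My plan is to package the per-step contribution into the signed codimension-one cycle $C := \sum_{j=1}^r s_j\bigl[\{x_j = 0\}\bigr] \subset \T^r$, so that the $s_j$-weighted count of jumps $\delta_j(k)=1$ equals the intersection number of the orbit segment $[k\vec\theta,\,(k+1)\vec\theta]$ with $C$. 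Condition (a) then translates, on $\Gamma$, into divisibility by $2l$ of the total $s_j$-weight picked up across each connected component of $C \cap \Gamma$. The final step---which I expect to be the technical core of the argument and the content of the ``cycle in $\T^r$'' announced in the introduction---is to extract from these component-wise divisibilities the individual conditions $l \mid s_j$; this should reduce to a linear-algebra computation in the character lattice of $\Gamma$ using a maximal $\Q$-independent subset of $\{\theta_j\}$ realising the rank of $\Gamma$, after which a no-jump step gives $2l \mid L$ exactly as before.
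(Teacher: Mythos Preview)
Your setup is correct and matches the paper: the difference formula $\mu'_k = L + 2\sum_j s_j\delta_j(k)$ is exactly \eqref{eq:difference}, and (b)$\Rightarrow$(a) follows at once. For (i) you do not even need equidistribution: $k=1$ is always a no-jump step since $\theta_j\in(0,1)$ forces $\delta_j(1)=0$, so $\mu'_1=L$ and $2l\mid L$ immediately---this is also what the paper does. In the $\Q$-independent case your isolation-of-one-jump argument is valid and is the special case $\Gamma=\T^r$ of the paper's scheme.

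The gap is in the dependent case. You propose to obtain \emph{all} the conditions $l\mid s_j$ in one stroke by ``a linear-algebra computation in the character lattice of $\Gamma$ using a maximal $\Q$-independent subset of $\{\theta_j\}$.'' The paper does \emph{not} do this, and there is a concrete obstruction: when $\Gamma\subsetneq\T^r$, the hyperplane trace $C_i=\Gamma\cap\{z_i=1\}$ can sit entirely inside some other $C_j$ (see the remark following Lemma~\ref{lemma:components}), so inside $\Gamma$ the $i$th and $j$th jumps are inseparable and no amount of intersection-counting along paths in $\Gamma$ will isolate $s_i$ from $s_j$. The paper therefore argues \emph{inductively}: it first shows (Lemma~\ref{lemma:intersect}) that $l$ divides the intersection index of \emph{any} path in $\Gamma$ with the weighted cycle $T=\sum s_j\T^{r-1}_j$, then proves the delicate combinatorial Lemma~\ref{lemma:components} asserting only that \emph{some} $C_i$ is not covered by $\bigcup_{j\neq i}C_j$. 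A short path crossing that $C_i$ alone gives $l\mid s_i$ for that single index; one then splits off $\Phi(\lambda_i)$ and reduces $r$. The proof of Lemma~\ref{lemma:components} is where the real work lies: it passes to a one-dimensional subgroup of $\Gamma$ and there uses, in an essential way, the hypothesis $\theta_j\in(0,1)$ (equivalently $\lambda_i\neq\bar\lambda_j$) to rule out the bad coincidence $\alpha_s=-\alpha_r$. Your plan mentions neither the induction nor this geometric input, and the ``linear algebra in the character lattice'' you sketch will not by itself distinguish $s_i$ from $s_j$ when $C_i\subset C_j$; so the dependent case is not merely unfinished but aimed in a direction that is unlikely to close.
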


This is Theorem \ref{thm:index-div-intro} from the introduction. Note
that Condition (a) is satisfied whenever $2l\mid \mu\big(\Phi^k\big)$
for all $k\in \N$. The latter requirement is stronger than and not
equivalent to (a) or (b) as simple examples show. (By
\eqref{eq:mult-index} and \eqref{eq:difference}, what follows from (a)
or (b) is only that $l\mid \mu\big(\Phi^k\big)$.)  However, we can
infer from the theorem that the sequence of indices or the sequence of
index jumps determines the eigenvalues with signature, except for the
zero signature eigenvalues:

\begin{Corollary} 
\label{cor:index-spec}
Let $\Phi$ and $\Psi$ be strongly non-degenerate. Then the following
three conditions are equivalent:
\begin{itemize}
\item[\rm(a)] $\mu(\Phi^k)=\mu(\Psi^k)$ for all $k\in\N$;
\item[\rm(b)]
  $\mu\big(\Phi^{k+1}\big)-\mu\big(\Phi^k\big)
  =\mu\big(\Psi^{k+1}\big)-\mu\big(\Psi^k\big)$
  for all $k\in\N$;
\item[\rm(c)] $\sgn_\lambda(\Phi)=\sgn_\lambda(\Psi)$ for all
  $\lambda\in S^1$ and
$$
\loop(\Phi)+\mult_{-1}(\Phi)=\loop(\Psi)+\mult_{-1}(\Psi).
$$
\end{itemize}
\end{Corollary}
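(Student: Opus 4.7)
My strategy is to reduce all three equivalences to a single application of the Index Divisibility Theorem, applied to the direct sum $\Xi:=\Phi\oplus\Psi^{-1}$, where $\Psi^{-1}\in\tSp$ denotes the inverse of $\Psi$ in the universal covering. Since $\Phi$ and $\Psi$ are both strongly non-degenerate, so is $\Xi$. The key observation is that the invariants entering Theorem \ref{thm:index-div} (as well as the indices themselves) are additive under direct sum and change sign under inversion; hence the quantities associated with $\Xi$ are precisely the differences of those associated with $\Phi$ and $\Psi$:
\[
\mu(\Xi^k)=\mu(\Phi^k)-\mu(\Psi^k),\qquad
(\loop+\mult_{-1})(\Xi)=(\loop+\mult_{-1})(\Phi)-(\loop+\mult_{-1})(\Psi),
\]
\[
\sgn_\lambda(\Xi)=\sgn_\lambda(\Phi)-\sgn_\lambda(\Psi)\ \text{ for all }\lambda\in S^1.
\]

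\textbf{Step 1: $\mathrm{(b)}\Leftrightarrow\mathrm{(c)}$.} I would first note that condition $\mathrm{(b)}$ of the Corollary says exactly that the index jump sequence $\mu(\Xi^{k+1})-\mu(\Xi^k)$ vanishes identically, which is equivalent to being divisible by $2l$ for \emph{every} $l\in\N$. Similarly, condition $\mathrm{(c)}$ says that $(\loop+\mult_{-1})(\Xi)=0$ and $\sgn_\lambda(\Xi)=0$ for all $\lambda$, which is equivalent to divisibility by $2l$ and $l$ respectively for every $l\in\N$. Applying Theorem \ref{thm:index-div} to $\Xi$ for each $l\in\N$ then yields the desired equivalence in one stroke.

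\textbf{Step 2: $\mathrm{(a)}\Leftrightarrow\mathrm{(b)}$.} The implication $\mathrm{(a)}\Rightarrow\mathrm{(b)}$ is immediate by subtraction. For the converse, I would use Step 1 to pass from $\mathrm{(b)}$ to $\mathrm{(c)}$, and then invoke the formula \eqref{eq:mult-index} for the Conley--Zehnder index of $\Phi$ (and of $\Psi$) in terms of $\loop+\mult_{-1}$ and $\sgn_\lambda$ to conclude $\mu(\Phi)=\mu(\Psi)$. Combined with the telescoping identity
\[
\mu(\Phi^k)=\mu(\Phi)+\sum_{j=1}^{k-1}\bigl(\mu(\Phi^{j+1})-\mu(\Phi^j)\bigr)
\]
(and the same for $\Psi$), this upgrades $\mathrm{(b)}$ to $\mathrm{(a)}$.

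\textbf{Expected main difficulty.} There is no deep obstacle beyond the Index Divisibility Theorem itself: the bookkeeping on additivity and sign-change of $\loop+\mult_{-1}$ and $\sgn_\lambda$ under $\oplus$ and inversion is already established in Section \ref{sec:index}, including the isospectral-deformation extension to the non-semisimple case. The only subtle point to double-check is that the sign reversal of $\sgn_\lambda$ under inversion is compatible with the convention $\sgn_\lambda=\sgn_{\bar\lambda}$ and with the fact that $\sigma_+$ is preserved (as a set) by inversion; this follows directly from the definitions given in Section \ref{sec:index}. Once that is confirmed, the entire corollary follows from one application of Theorem \ref{thm:index-div} (for all $l$) plus the formula \eqref{eq:mult-index}.
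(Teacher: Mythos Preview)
Your proposal is correct and follows essentially the same route as the paper: both apply Theorem~\ref{thm:index-div} to $\Xi=\Phi\oplus\Psi^{-1}$, use additivity and sign-reversal of $\loop+\mult_{-1}$ and $\sgn_\lambda$ to translate the conditions into vanishing statements for $\Xi$, and recover (a) from (b) via \eqref{eq:mult-index} (giving $\mu(\Phi)=\mu(\Psi)$) together with telescoping. The only cosmetic difference is that the paper organizes the argument as the cycle $\mathrm{(a)}\Rightarrow\mathrm{(b)}\Rightarrow\mathrm{(c)}\Rightarrow\mathrm{(a)}$ rather than your two-block $\mathrm{(b)}\Leftrightarrow\mathrm{(c)}$, $\mathrm{(a)}\Leftrightarrow\mathrm{(b)}$.
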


\begin{Remark}
  \label{rmk:determine}
  In general, neither the index sequence $\mu\big(\Phi^k\big)$ nor the
  jump sequence $\mu\big(\Phi^{k+1}\big)-\mu\big(\Phi^k\big)$
  completely determines $\sigma(\Phi)$. For instance, zero signature
  eigenvalues are not detected in Theorem \ref{thm:index-div} and
  Corollary \ref{cor:index-spec}. In Corollary
  \ref{cor:index-spec}(c), it is possible for $\lambda $ to be a zero
  signature eigenvalue of $\Phi$ but not an eigenvalue of $\Psi$.
  \end{Remark}

  \begin{proof}[Proof of Corollary \ref{cor:index-spec}]
  Clearly, (a) implies (b). Assume that (b) holds. Then
  $$
  \mu\big((\Phi\oplus\Psi^{-1})^{k+1}\big)-
  \mu\big((\Phi\oplus\Psi^{-1})^k\big)
  =0
  $$
  and thus, by Theorem \ref{thm:index-div} applied to this
  tranformation,
$$
\sgn_\lambda \big(\Phi\oplus\Psi^{-1}\big)
=0
$$
for all $\lambda \in S^1$ and 
$$
\loop\big(\Phi\oplus\Psi^{-1}\big)
+\mult_{-1}\big(\Phi\oplus\Psi^{-1}\big)=0.
$$
Here we are using the fact that 0 is the only integer divisible by
infinitely many integers. Now, by additivity, we see that the
signatures and $\loop+\mult_{-1}$ for $\Phi$ and $\Psi$ are equal.

To prove that (c) implies (a) we need to show that
$\loop(\Phi)+\mult_{-1}(\Phi)$ and the signatures for $\Phi$ determine
these invariants for all iterations $\Phi^k$. This is easy to prove
directly. Alternatively, by Theorem \ref{thm:index-div},
$\loop(\Phi)+\mult_{-1}(\Phi)$ and the signatures for $\Phi$ determine
the jump sequence $\mu\big(\Phi^{k+1}\big)-\mu\big(\Phi^k\big)$ and
also, by \eqref{eq:mult-index}, the initial condition
$\mu(\Phi)$. Hence they also determine the sequence
$\mu\big(\Phi^k\big)$.
\end{proof}

\section{Pseudo-rotations vs.\  rotations}
\label{sec:PRvsR}

\subsection{True rotations}
\label{sec:R}
Consider a true rotation of $\CP^n$, i.e., a Hamiltonian
diffeomorphism $\varphi_Q$ of $\CP^n$ generated by a quadratic
Hamiltonian
\begin{equation}
\label{eq:Q}
Q(z)=\sum_{i=0}^n a_i |z_i|^2,
\end{equation}
where we have identified $\CP^n$ with the quotient of the unit sphere
$S^{2n+1}\subset \C^{n+1}$ and renormalized the standard symplectic
form $\omega$ on $\CP^n$ so that
$$
\int_{\CP^1}\omega=1.
$$
(For the standard normalization this integral is $\pi$.)

Most of the time it will be convenient to order the eigenvalues $a_i$
of $Q$ so that
\begin{equation}
\label{eq:order_lambda}
a_0\leq \ldots\leq a_n.
\end{equation}
Furthermore, since the Hamiltonian $\sum|z_i|^2$ reduces to a constant
Hamiltonian on $\CP^n$, we can assume without loss of generality that
\begin{equation}
\label{eq:sum_lambda}
\sum a_i=0,
\end{equation}
which is equivalent to the condition that $Q$ is normalized, i.e.,
$$
\int_{\CP^n} Q\,\omega^n=0.
$$
Finally, $\varphi_Q$ is non-degenerate if and only if
$a_i-a_j\not\in\Z$ and strongly non-degenerate if and only if
$a_i-a_j\not\in\Q$. Among the periodic orbits of $\varphi_Q$ are the
coordinate axes $x_0,\ldots,x_n$, all of which are one-periodic, and
these are the only periodic orbits when $\varphi_Q$ is strongly
non-degenerate. Thus $\varphi_Q$ is then a pseudo-rotation. We will
assume this to be the case from now on unless specifically stated
otherwise.

The first type eigenvalues of $D\varphi_Q$ at $x_i$ are
$\exp\big(2\pi\sqrt{-1}(a_i-a_j)\big)$ where $j\neq i$. Viewing
$\varphi_Q$ as an element in the universal covering
$\widetilde{\mathit{Ham}}(\CP^n,\omega)$ generated by the flow of $Q$,
we have the mean index of $\varphi_Q$ at $x_i$ defined once $x_i$ is
equipped with a capping. The orbit $x_i$ is constant and, in
particular, it can be given a trivial capping. We denote the resulting
trivially capped orbit by $\mathring{x}_i$. It is easy to see that
\begin{equation}
\label{eq:Delta}
\hmu(\rx_i)=2\sum_{j\neq
  i}(a_i-a_j)=-2\sum_j a_j+2(n+1)a_i
=2(n+1)a_i,
\end{equation}
where in the last equality we used \eqref{eq:sum_lambda}.  With $a_i$
arranged in an increasing order as in \eqref{eq:order_lambda}, the
Conley--Zehnder index of $\rx_i$ is $-n+2i$ when $Q$ is small. Without
the latter requirement, $\mu(\rx_i)$ can be any integer of the same
parity as $n$. The action of $\rx_i$ is $a_i$, and hence we have
\begin{equation}
\label{eq:rot-spec}
\CS(Q)=\coprod_i( a_i+\Z),
\end{equation}
where as above we assumed $\varphi_Q$ to be non-degenerate. 

For our purposes, however, it is more useful to cap $x_i$ so that its
Conley--Zehnder index is in the range from $-n$ to $n$ even when $Q$
is large. There exists exactly one such capping of $x_i$ and, as in
Sections \ref{sec:spectra} and \ref{sec:R}, we denote the resulting
capped orbit by $\bx_i$. Then the indices $\mu(\bx_i)$ for
$i=0,\ldots,n$ comprise the (un-ordered) collection of integers
$-n,\, -n+2,\ldots, n-2,\, n$. This recapping also affects the mean
indices.

\begin{Lemma}
\label{lemma:new_sum}
We have 
$$
\sum \hmu(\bx_i)=0.
$$
\end{Lemma}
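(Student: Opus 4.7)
The plan is to first evaluate both the mean index and the Conley--Zehnder index of the \emph{trivially} capped orbits $\rx_i$, verify that both sums vanish, and then transfer the conclusion to the $\bx_i$ by observing that any recapping shifts $\hmu$ and $\mu$ by the same integer multiple of $2N = 2(n+1)$, so that the difference $\hmu-\mu$ is recapping-invariant.

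\emph{Step 1: the mean index sum for $\rx_i$.} This is immediate from \eqref{eq:Delta} together with the normalization \eqref{eq:sum_lambda}:
$$
\sum_{i=0}^{n} \hmu(\rx_i) \;=\; 2(n+1)\sum_i a_i \;=\; 0.
$$

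\emph{Step 2: the Conley--Zehnder index sum for $\rx_i$.} The linearized flow $D\varphi_Q^t|_{x_i}$ on $T_{x_i}\CP^n$ splits as the direct sum, over $j\neq i$, of planar rotations with first-type eigenvalue $\exp\!\big(2\sqrt{-1}(a_i-a_j)\big)$. Each such $\C$-factor contributes $2(a_i-a_j)$ to the mean index and, by the standard formula for the CZ index of a rotation path $t\mapsto e^{2\pi\sqrt{-1}\nu t}$ with $\nu=a_i-a_j\notin\Z$, contributes $2\lfloor a_i-a_j\rfloor+1$ to the Conley--Zehnder index. (Strong non-degeneracy of $\varphi_Q$ forces $a_i-a_j\notin\Q$, hence in particular $a_i-a_j\notin\Z$.) Hence
$$
\mu(\rx_i) \;=\; n \,+\, 2\sum_{j\neq i}\lfloor a_i-a_j\rfloor.
$$
Summing over $i$, grouping the ordered pairs $(i,j)$ and $(j,i)$ and using $\lfloor t\rfloor+\lfloor -t\rfloor=-1$ for $t\notin\Z$,
$$
\sum_{i}\mu(\rx_i) \;=\; n(n+1)\,+\,2\sum_{i\neq j}\lfloor a_i-a_j\rfloor \;=\; n(n+1) \,+\, 2\cdot\Big(\!-\!\binom{n+1}{2}\Big) \;=\; 0.
$$

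\emph{Step 3: recapping and conclusion.} For each $i$ there is a unique $k_i\in\Z$ such that $\bx_i$ is obtained from $\rx_i$ by attaching a sphere of Chern number $k_i$. Both indices transform in the same way under recapping:
$$
\hmu(\bx_i)-\hmu(\rx_i) \;=\; -2(n+1)k_i \;=\; \mu(\bx_i)-\mu(\rx_i),
$$
so the quantity $\hmu(\bx_i)-\mu(\bx_i)$ coincides with $\hmu(\rx_i)-\mu(\rx_i)$. Summing the latter identity over $i$ and invoking Steps 1--2,
$$
\sum_i\hmu(\bx_i)-\sum_i\mu(\bx_i)\;=\;\sum_i\hmu(\rx_i)-\sum_i\mu(\rx_i)\;=\;0.
$$
By construction (as noted in the paragraph preceding the lemma), the integers $\mu(\bx_i)$ run through the unordered set $\{-n,-n+2,\ldots,n-2,n\}$, so $\sum_i\mu(\bx_i)=0$, and therefore $\sum_i\hmu(\bx_i)=0$, as claimed.

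The only genuinely non-routine point is Step 2, where one must use the correct formula for the CZ index of a long rotation path; the small-$Q$ case is easy because each rotation has $|\nu|<1$ and contributes $\pm 1$, but for large $a_i$ the floor term is essential to guarantee cancellation.
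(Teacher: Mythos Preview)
Your proof is correct and follows essentially the same route as the paper's: both establish $\sum\hmu(\rx_i)=0$, $\sum\mu(\rx_i)=0$, and $\sum\mu(\bx_i)=0$, and then combine these via the recapping identity $\hmu(\bx_i)-\hmu(\rx_i)=\mu(\bx_i)-\mu(\rx_i)$. The only cosmetic difference is in Step~2: where you compute $\sum\mu(\rx_i)$ explicitly via the floor formula and the identity $\lfloor t\rfloor+\lfloor -t\rfloor=-1$, the paper instead observes that the direct sum $\Phi(t)=\bigoplus_i D\varphi_Q^t|_{\rx_i}\in\Sp\!\big(2n(n+1)\big)$ has its first-type eigenvalues $\exp\!\big(2\sqrt{-1}(a_i-a_j)\big)$ occurring in complex-conjugate pairs of equal multiplicity (via $(i,j)\leftrightarrow(j,i)$), so $\mu(\Phi)=0$---which is the same pairing argument phrased more conceptually.
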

It might be worth emphasizing that this equality is in $\R$ and not
just modulo $2(n+1)$.

\begin{proof}
  The total recapping from the orbits $\rx_i$ to the orbits $\bx_i$ is
$$
\sum\hmu(\bx_i)-\sum\hmu(\rx_i)=\sum\mu(\bx_i)-\sum\mu(\rx_i),
$$
where we identified $\pi_2(\CP^n)$ with $2(n+1)\Z$ via
$2c_1(T\CP^n)$. By \eqref{eq:sum_lambda} and \eqref{eq:Delta}, we have
$$
\sum\hmu(\rx_i)=0,
$$
and hence 
$$
\sum\hmu(\bx_i)=\sum\mu(\bx_i)-\sum\mu(\rx_i).
$$
The first sum on the right-hand side is zero. For, after if necessary
rearranging the terms,
$$
\sum\mu(\bx_i)=-n+ (-n+2)+\cdots+(n-2)+n=0.
$$
We also have 
$$
\sum\mu(\rx_i)=0.
$$
To see this, note that this sum is the Conley--Zehnder index of the
path
$$
\Phi(t)=\bigoplus\, D\varphi^t|_{x_i}, \quad t\in [0,1],
$$
in $\Sp\big(2n(n+1)\big)$. The first type eigenvalues of $\Phi$ break
down into complex conjugate pairs
$\exp\big(2\pi\sqrt{-1}(a_i-a_j)\big)$ where $j\neq i$ and the
eigenvalues within each pair have the same multiplicity. Hence
$\mu(\Phi)=0$, which concludes the proof of the lemma.
\end{proof}

A rotation $\varphi_Q$, viewed as an element of $\tHam(\CP^n,\omega)$
and not necessarily non-degenerate, lies in
$\SU(n+1)=\widetilde{\PU}(n+1)$ and conversely every element of
$\SU(n+1)$ can be generated by a diagonal quadratic Hamiltonian $Q$ as
in \eqref{eq:Q} for a suitable choice of coordinates.  Let us next
examine the question when $\varphi_Q$ is trivial as an element of this
universal covering, i.e., $\varphi_Q=\id$ in $\tHam(\CP^n,\omega)$.

Clearly, $\varphi_Q$ is trivial if and only if it is a contractible
loop.  For this, first of all, the path $\varphi^t_Q$, $t\in [0,1]$,
must be a loop, which is equivalent to that $a_i-a_j\in\Z$ for all $i$
and $j$. Next, this loop must be contractible, i.e., it must represent
the zero class in $\pi_1\big(\Ham(\CP^n,\omega)\big)$.  Note that we
have the ``Maslov index'' homomorphism
$\pi_1\big(\Ham(\CP^n,\omega)\big)\to \Z_{n+1}=\Z/(n+1)\Z$ given by
the evaluation of one-half of the mean index of a loop modulo $n+1$ on
any orbit. In particular, for the loop $\varphi_Q^t$ we can evaluate
the mean index at any of the fixed points. The composition
$$
\Z_{n+1}
=\pi_1\big(\PU(n+1)\big)\to \pi_1\big(\Ham(\CP^n,\omega)\big)\to
\Z_{n+1}
$$
is an isomorphism. Hence the loop $\varphi_Q$, which actually lies in
$\PU(n+1)$, is trivial in $\tHam(\CP^n,\omega)$ if and only if it is
trivial in $\widetilde{\PU}(n+1)$ and if and only if $a_i-a_j\in\Z$
for all $i$ and $j$ and $(n+1)a_i\in \Z$ for one eigenvalue $a_i$ or
equivalently for all eigenvalues. (The argument goes back to
Weinstein, \cite{We}, and comprises a linear algebra counterpart of
the Seidel representation, \cite{Sei}). This observation, of course,
readily translates into a criterion in terms of the eigenvalues for
two simultaneously diagonalizable quadratic Hamiltonians to generate
the same rotation. For our purposes, however, the following condition
expressed via action spectra is more useful.

\begin{Lemma}
\label{lemma:rot-spectra}
Two non-degenerate rotations $\varphi_Q$ and $\varphi_{Q'}$ with
simultaneously diagonalizable $Q$ and $Q'$ are equal as elements of
$\widetilde{Ham}(\CP^n,\omega)$ if and only if they have the same
action spectrum:
$$
\CS(Q)=\CS(Q').
$$
\end{Lemma}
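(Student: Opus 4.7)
\medskip

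\noindent\emph{Plan.} The plan is to prove the two directions separately. The forward direction follows from a standard homotopy invariance argument: for Hamiltonians on $\CP^n$ normalized to have zero mean, the action spectrum $\CS(H)$ depends only on the class $\varphi_H\in\tHam(\CP^n,\omega)$. Since both $Q$ and $Q'$ are normalized (i.e., $\sum a_i=\sum a'_i=0$), the assumption $\varphi_Q=\varphi_{Q'}$ in $\tHam$ immediately yields $\CS(Q)=\CS(Q')$.

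For the reverse direction, the first step is to fix a common orthonormal basis diagonalizing both Hamiltonians, writing $Q=\sum_i a_i|z_i|^2$ and $Q'=\sum_i a'_i|z_i|^2$. Since $\{Q,Q'\}=0$, the Hamiltonian flows commute, and a direct computation gives $\varphi_Q^t\circ(\varphi_{Q'}^t)^{-1}=\varphi_{Q-Q'}^t$. Consequently $\varphi_Q=\varphi_{Q'}$ in $\tHam$ is equivalent to the path $t\mapsto\varphi_{Q-Q'}^t$ being a contractible loop in $\Ham(\CP^n,\omega)$, and the task reduces to checking the latter.

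To finish, the plan is to use the spectral hypothesis to pin down $Q-Q'$. By \eqref{eq:rot-spec} together with non-degeneracy, each of $\CS(Q)$ and $\CS(Q')$ decomposes as a disjoint union of $n+1$ distinct cosets in $\R/\Z$, indexed respectively by the multisets $\{a_i\bmod\Z\}$ and $\{a'_i\bmod\Z\}$. The equality $\CS(Q)=\CS(Q')$ then provides a bijection between these multisets, and after relabeling the common eigenbasis by a suitable permutation one may take $c_i:=a_i-a'_i\in\Z$ for every $i$. With $Q-Q'=\sum c_i|z_i|^2$ having integer eigenvalues, the two requirements of the Weinstein-type criterion recalled immediately before the lemma---namely $c_i-c_j\in\Z$ (the loop condition) and $(n+1)c_i\in\Z$ (null-homotopy of the loop in $\Ham$)---are both automatic. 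Hence $\varphi_{Q-Q'}$ is trivial in $\tHam$, completing the argument.

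The most delicate step will be the reindexing of the common eigenbasis. One must verify that relabeling leaves the Hamiltonians (viewed as functions on $\C^{n+1}$), and hence the corresponding elements of $\tHam$, unchanged, while simultaneously realizing the integer-difference matching required by the Weinstein criterion. The compatibility with the normalization $\sum a_i=\sum a'_i=0$ is immediate since a permutation preserves sums. If this reindexing argument cannot be made rigorous as stated, the natural fallback is to strengthen the hypothesis to an equality of marked spectra $\cCS(Q)=\cCS(Q')$, which would fix the matching intrinsically via the LS bijection of Theorem~\ref{thm:bijection}.
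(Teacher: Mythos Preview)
Your reverse implication takes a genuinely different route from the paper's, and the step you flag as ``most delicate'' is in fact a real gap.

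The paper does not pass through the Weinstein-type criterion for $Q-Q'$ at all. Instead it argues directly that the \emph{sorted} eigenvalue sequences coincide, not merely their residues mod~$\Z$. With both sequences normalized by \eqref{eq:sum_lambda}, the point is that $a_0<\cdots<a_n$ are $n+1$ \emph{consecutive} elements of $\CS$ whose sum is zero; this consecutiveness uses Theorem~\ref{thm:bijection} (the action ordering on $\CS$ agrees with the Conley--Zehnder index ordering, so the actions of $\bx_0,\ldots,\bx_n$ are adjacent in $\CS$). A subset of $\R$ contains at most one block of $n+1$ consecutive elements summing to zero, so from $\CS(Q)=\CS(Q')$ one gets $a_i=a'_i$ on the nose. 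This is strictly stronger than the congruence $a_i\equiv a'_{\sigma(i)}\pmod\Z$ that the unmarked spectral hypothesis gives you, and it is precisely the extra input needed.

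Your relabeling step cannot be made to work as written. Since $\varphi_Q$ is non-degenerate, the $a_i$ are pairwise distinct, so the eigenlines of $Q$ already determine the common diagonalizing basis up to scaling each vector. The pairing $(a_i,a'_i)$ is therefore intrinsic: $a'_i$ is the eigenvalue of $Q'$ on the $a_i$-eigenline of $Q$. Permuting the basis permutes the \emph{pairs} $(a_i,a'_i)$ simultaneously; it does not alter the mismatch permutation $\sigma$ and cannot force $\sigma=\id$. Thus from $\CS(Q)=\CS(Q')$ alone you only obtain $a_i-a'_{\sigma(i)}\in\Z$ for some $\sigma$, which is not enough to conclude that $Q-Q'$ has integral eigenvalues in the common basis. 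Your proposed fallback to marked spectra would indeed supply the missing rigidity, and the paper's ``consecutive points with sum zero'' observation is exactly how one recovers that rigidity from the unmarked spectrum together with the normalization~\eqref{eq:sum_lambda}.
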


\begin{proof}
  The action spectrum is completely determined by an element of
  $\tHam(\CP^n)$ and hence we only need to show that two rotations
  with equal action spectra are the same.

  Let $a_i$ and $a'_i$ be the eigenvalues of $Q$ and, respectively,
  $Q'$ normalized to satisfy \eqref{eq:order_lambda} and
  \eqref{eq:sum_lambda}. By \eqref{eq:rot-spec}, we have
$$
 \coprod_i a_i+\Z=\coprod_i a'_i+\Z=:\CS.
$$
Our goal is to show that $a_i=a'_i$ for all $i$. The actions $a_i$ or
$a'_i$ for $i=0,\ldots, n$ are $n+1$ consequent points in $\CS$ with
sum equal to zero.  (This follows from the fact that the ordering of
$\CS$ by the Conley--Zehnder index agrees with the ordering of $\CS$
by the action, i.e., as a subset of $\R$; see Theorem
\ref{thm:bijection}.) There is at most one way to pick up such $n+1$
consequent points, and hence $a_i=a'_i$.
\end{proof}

\subsection{Pseudo-rotations}
\label{sec:PR}
Consider a non-degenerate pseudo-rotation $\varphi$ of $\CP^n$ and
let, as in Section \ref{sec:spectra}, $\bx_0,\ldots,\bx_n$ be its
fixed points uniquely capped so that $\big|\mu(\bx_i)\big|\leq
n$. Furthermore, it will often be convenient to order the fixed points
by requiring the Conley--Zehnder index (or equivalently the action) to
increase:
$$
\mu(\bx_0)=-n,\,\mu(\bx_1)=-n+2,\,\ldots,\, \mu(\bx_n)=n.
$$

\begin{Definition}
\label{def:balanced}
A non-degenerate pseudo-rotation $\varphi$ is \emph{balanced} if
\begin{equation}
\label{eq:balanced}
\sum_i\hmu(\bx_i)=0.
\end{equation}
\end{Definition}

\begin{Example}
  By Lemma \ref{lemma:new_sum}, every strongly non-degenerate (true)
  rotation is balanced.
\end{Example}

\begin{Remark}
  Replacing the Conley--Zehnder index by the LS-index (see Section
  \ref{sec:spectra}) we can extend this definition to all, not
  necessarily non-degenerate, pseudo-rotations.
\end{Remark}

Note that the condition that $\varphi$ is balanced does not
automatically imply that all iterates $\varphi^k$ are
balanced. However, as is easy to see, these iterates are balanced
modulo $2(n+1)$, i.e., \eqref{eq:sum-RR} which is a slightly weaker
condition holds.  Hypothetically, every pseudo-rotation is balanced at
least under suitable non-degeneracy conditions. Then \eqref{eq:sum-RR}
is also satisfied and thus \eqref{eq:sum-RR} would indeed be a
universal resonance relation; see Section \ref{sec:background}. By
Proposition \ref{prop:S^2-nondeg} or the Poincar\'e--Birkhoff theorem
(cf.\ \cite[Appendix A.2]{Br:Annals}), every pseudo-rotation of $S^2$
is balanced. Furthermore, this conjecture, which in a somewhat
different form is already stated in \cite{GK}, is supported by the
following result (Theorem \ref{thm:balanced-4D-intro} from the
introduction):

\begin{Theorem}
  \label{thm:balanced-4D}
  Let $\varphi$ be a strongly non-degenerate pseudo-rotation of
  $\CP^2$. Then all fixed points of $\varphi$ are elliptic and, unless
  it is a ghost pseudo-rotation in the sense of Definition
  \ref{def:ghost}, $\varphi$ is balanced.
\end{Theorem}

We discuss ghost pseudo-rotations in more detail in Section
\ref{sec:Ghosts}.

\begin{proof}
  Let $\varphi\colon \CP^2\to\CP^2$ be a strongly non-degenerate
  pseudo-rotation. As above, we denote its capped fixed points with
  Conley--Zehnder indices $-2$, $0$ and $2$ by, respectively, $\bx_0$,
  $\bx_1$ and $\bx_2$.

  To see that all three fixed points of $\varphi$ are necessarily
  elliptic, assume otherwise. Then, since the dimension is four, at
  least one of the points, say, $x$ is either hyperbolic or hyperbolic
  elliptic. In the latter case, $\mu(x)$ is odd if the hyperbolic part
  of $x$ is positive, and $\mu\big(x^2\big)$ is odd if the hyperbolic
  part of $x$ is negative. Since all periodic points of $\varphi$ have
  even index, this is impossible. Hence at least one fixed point of
  $\varphi$ is hyperbolic.  However, in this case, $\varphi$ must have
  infinitely many periodic orbits; see \cite {GG:hyperbolic} or
  \cite{GG:PR}. This shows that all fixed points of $\varphi$ are
  elliptic.

  The key to the proof of the theorem is the fact that, by Theorem
  \ref{thm:bijection}, for all iterations $k\in \N$ the orbits $x_i^k$
  have distinct even indices modulo $2(n+1)=6$. (Recall that the
  indices modulo $2(n+1)$ are well-defined without capping.)

  As in the proof of Lemma \ref{lemma:new_sum}, consider
$$
\Phi(t)=\bigoplus_i D\varphi^t|_{\bx_i}, \quad t\in [0,\,1],
$$
which we will treat as an element of
$\tSp\!\big(2n(n+1)\big)=\tSp(12)$. Note that here, as the notation
indicates, we use the capping of $\bx_i$ to turn $D\varphi^t|_{x_i}$
into an element of $\tSp(2n)=\tSp(4)$.  As in Remark \ref{rmk:ss}, we
can assume without loss of generality throughout the proof that the
linearized maps $D\varphi|_{x_i}$ and the sum $\Phi(1)$ are
semisimple. Clearly,
\begin{equation}
\label{eq:mu(Phi)}
\mu(\Phi)=\sum\mu(\bx_i)=0
\end{equation}
and 
$$
\mu\big(\Phi^k\big)=\sum\mu\big(\bx_i^k\big)=0\mod 2(n+1)=6
$$
due to the fact that for every $k$ the indices $\mu(\bx_i^k)$ assume
only the values $-2$, $0$ and $2$ modulo 6, and are all distinct.

Furthermore, $\mult_{-1}(\Phi)=0$ since the orbits $x_i$, and hence
the transformation $\Phi$, are elliptic. Applying Theorem
\ref{thm:index-div} to $\Phi$, we see that $6\mid \loop(\Phi)$ and
$3\mid\sgn_\l(\Phi)$ for all $\l\in\sigma_+(\Phi)$. Since
$\sigma_+(\Phi)$ comprises exactly six eigenvalues counting with
multiplicity, there are now three possibilities:
\begin{itemize}
\item[\emph{Case 1}:] There are three eigenvalues (not necessarily
  distinct) in $\sigma_+(\Phi)$ and all eigenvalues have multiplicity
  2 and signature 0.
\item[\emph{Case 2}:] All eigenvalues in $\sigma_+(\Phi)$ are equal
  and the signature is $\pm 6$.
\item[\emph{Case 3}:] There are two distinct eigenvalues in
  $\sigma_+(\Phi)$ and their signatures are $\pm 3$.
\end{itemize}

Note that these three cases are completely determined by
$\Phi(1)\in \Sp(12)$ rather than $\Phi\in \tSp(12)$. Case 1 includes
the situation where two of the eigenvalues or even all three are
equal, although the latter, as we will see, is impossible.  In any
event, in Case 1 we also have $\loop(\Phi)=0$ due to
\eqref{eq:mult-index} and \eqref{eq:mu(Phi)}. Therefore, by
\eqref{eq:mult-mean-index},
$$
\sum\hmu(\bx_i)=\hmu(\Phi)=\loop(\Phi)=0,
$$
and the proof is finished.

In the rest of the proof we will rule out Case 2 and, assuming that
$\varphi$ is not a ghost pseudo-rotation, Case 3. Before turning to
that, let us point out that $\Phi_{\varphi^k}\neq \Phi_{\varphi}^k$,
where we included the dependence of $\Phi$ on $\varphi$ in the
notation. However, $\Phi_{\varphi^k}(1)= \Phi_{\varphi}^k(1)$ or, in
other words, the two elements of $\tSp(12)$ only differ by a loop. In
particular, they have the same spectra and signatures. Hence, in what
follows, when only this information is concerned we can replace
$\Phi_{\varphi^k}$ by $\Phi_{\varphi}^k$.

Let us rule out Case 2. Then $\Phi$ is the direct sum of six equal
clockwise (when $\sgn_\lambda(\Phi)=-6$) or counter-clockwise (when
$\sgn_\lambda(\Phi)=6$) rotations and possibly some loop.  Observe
that this case is invariant under iteration, although the signature
value may change sign. Denote by $\lambda$ the only eigenvalue in
$\sigma_+(\Phi)$. The positive semi-orbit $\{\lambda^k\mid k\in\N\}$
contains $1\in S^1$ in its closure. (In fact, by non-degeneracy, this
orbit is dense in $S^1$.) Hence, by passing to an iterate of
$\varphi$, we may assume that $\lambda$ is arbitrarily close to
$1$. (It suffices to ensure that the distance in $S^1$ from $\lambda$
to 1 is less than $\pi/6$.)  Then, passing to the 6-th iteration of
$\varphi$ we can guarantee that $\loop\big(\bx_i^6\big)=0$ modulo 6
for all three capped fixed points of $\varphi$. Next, let us redenote
$\varphi^6$ by $\varphi$ and $\lambda^6$ by $\lambda$, etc. After
recapping and relabeling the original orbits $\bx_i$, we obtain three
capped one-periodic orbits of $\varphi$ (again denoted by $\bx_i$)
with zero loop part, and hence
$\mu(\bx_i)=\sgn_\lambda\big(D\varphi^t|_{\bx_i}\big)$. Moreover,
these indices are distinct and equal to $\pm 2$ and 0. Furthermore,
the sum of the signatures is still equal to
$\sgn_\lambda(\Phi)=\pm 6$, which is impossible -- all three indices
would then have to be either $+2$ or $-2$.

The goal of the rest of the proof is to deal with Case 3. Denote the
distinct eigenvalues of $\Phi(1)$ in $\sigma_+(\Phi)$ by $\l$ and
$\eta$. Then it is not hard to see that $\Phi$ is the direct sum of
three copies of a path $\psi_0$ in $\U(1)$ and three copies of a path
$\psi_1$ in $\U(1)$. The end point $\psi_1(1)$ is either $\lambda$ or
$\bar{\l}$ depending on whether $\sgn_{\l}(\Phi)$ is positive and thus
equal to 3 or negative and equal to $-3$. Likewise, the end point of
$\psi_0$ is either $\eta$ or $\bar{\eta}$ depending on
$\sgn_{\eta}(\Phi)$.

Let $G:=G(\Phi)$ be the subgroup in $\T^2=S^1\times S^1$ topologically
generated by $(\lambda,\eta)$ and let $G_0$ be the connected component
of the identity in $G$. We necessarily have $\dim G=1$; for, by
\cite{GK}, these eigenvalues must satisfy a resonance relation of the
form $\lambda^m\eta^l=1$ for some integers $m$ and $l$, which are not
both equal to zero, and also since $\dim G\geq 1$ by
non-degeneracy. Furthermore, note that $G/G_0$ is cyclic since $G$ is
monothetic.

The group $G$ depends on $\Phi$ and it is essential for what follows
to understand how it changes under iterations of $\Phi$ (or
equivalently $\varphi$). To avoid ambiguity in labeling the
eigenvalues, we will fix a decomposition of $\Phi$ into the sum of the
three copies of $\psi_0$ and $\psi_1$ and use it to order the
eigenvalues of $\Phi^k$. Let $\lambda_k$ and $\eta_k$ be the
eigenvalues of $\psi_1(1)^k$ and, respectively, $\psi_0(1)^k$ in the
open upper half-circle $S^1_+\subset S^1$. Set
$\Pi=S^1_+\times S^1_+\subset \T^2$. Since $(\lambda, \eta)\in \Pi$,
we necessarily have $G\cap\Pi\neq \emptyset$. For instance, $G$ cannot
be the anti-diagonal $\bar{\Delta}$ in $\T^2$.

When $\Phi$ is replaced by $\Phi^k$, the pair $(\lambda,\eta)$ gets
replaced by $(\lambda_k,\eta_k)$ which is equal to
$(\lambda^k,\eta^k)$, but only up to complex conjugation. Indeed, if
$\Imm\lambda^k<0$, we need to replace $\lambda^k$ by $\bar{\l}^k$ and
similarly for $\eta$. The reason is that by construction we take the
eigenvalues in $S^1_+$. We call this instance a \emph{flip}. In other
words, the generator $(\lambda_k,\eta_k)$ of $G(\Phi^k)$ is obtained
from $(\lambda^k, \eta^k)$ by applying, if necessary, complex
conjugations to its coordinates to bring it into $\Pi$.

It follows that, in the obvious notation, $G_0\big(\Phi^k\big)$ is
either $G_0(\Phi)$ or is obtained from it by conjugating one of the
coordinates. Also, when passing from $\Phi$ to $\Phi^k$, the quotient
$G/G_0$ can become smaller: $G\big(\Phi^k\big)/G_0\big(\Phi^k\big)$ is
only a subgroup of $G(\Phi)/G_0(\Phi)$. For instance, one can find a
sequence of iterations $k\to\infty$ such that this quotient is
trivial. Indeed, $G$ contains points $(\lambda^k,\eta^k)$ arbitrarily
close to the unit $(1,1)$, and then
$(\l_k,\eta_k)\in G_0\big(\Phi^k\big)$ topologically generates
$G_0(\Phi^k)=G\big(\Phi^k\big)$.

Flips also determine the behavior of the signature under iterations
and it is not hard to see that
\begin{eqnarray}
  \sgn_{\l_k}\big(\Phi^k\big)& = & \sgn_\l(\Phi) \textrm{ if }\l^k\in S^1_+, \textrm{
                                   and } \label{eq:sgn-iter1}\\
  \sgn_{\l_k}\big(\Phi^k\big) &= &-\sgn_\l(\Phi) \textrm{ if } \bar{\l}^k\in S^1_+, \label{eq:sgn-iter2}
\end{eqnarray}
and likewise for $\eta$.

Before proceeding with the proof, we need to clear one terminological
point. Namely, in contrast with Cases 1 and 2, Case 3 is not closed
under iterations: it might happen that $\lambda_k=\eta_k$ while
$\lambda\neq \eta$. This is exactly the situation where $G_0$ is the
diagonal $\Delta$ or the anti-diagonal $\bar{\Delta}$. By passing to
an iterate of $\varphi$, we can make $G=\Delta$ or, equivalently,
$\sigma_+(\Phi)$ comprises exactly one point, denoted by $\lambda$,
either reducing Case 3 to Case 2 if $\lambda$ has signature $\pm 6$,
which we have proved to be impossible, or to a subcase of Case 1.

This is the subcase where $\sigma_+(\Phi)=\{\lambda\}$ and $\lambda$
has multiplicity six and zero signature, which, as we have already
mentioned, is also impossible. Indeed, by passing to an iterate, we
may ensure that as in Case 2 the loop parts of all fixed points are
zero modulo 6 and indices are $\pm 2$ and $0$. We can still assume
that the only eigenvalue in $\sigma_+(\Phi)$ is close to $1\in
S^1$. Then, after sufficiently many iterations, once the eigenvalue
makes its first ``jump'' the indices $\pm 2$ will change to $\pm 6=0$
modulo 6 which contradicts the fact that distinct periodic orbits must
have distinct indices modulo 6.

To summarize, we have shown that $G_0$ is not $\Delta$ or
$\bar{\Delta}$, and $\lambda_k\neq \eta_k$ for all $k\in \N$.

Since $G_0$ is connected, it can be parametrized as
\begin{equation}
  \label{eq:paramG}
\gamma(t)=\big(e^{2\pi\sqrt{-1} p t}, e^{2\pi\sqrt{-1} q t}\big)
\end{equation}
with $t\in [0,\, 1)$. Here, $p$ and $q$ are relatively prime integers,
and $p\neq 0$ and $q\neq 0$ due to non-degeneracy. (Furthermore, we
cannot have $p=1=q$ or $p=\pm q$.) We call $G_0$ a \emph{first
  quadrant subgroup} if $p>0$ and $q>0$.

We claim that
\begin{eqnarray*}
  \sgn_{\l}(\Phi) &=& -\sgn_\eta(\Phi)\textrm{ when } G_0 \textrm{ is
                      first quadrant, and }\\
  \sgn_{\l}(\Phi) &=& \sgn_\eta(\Phi) \textrm{ otherwise}.
\end{eqnarray*}
Let us show the former. By passing to an iterate and, by
\eqref{eq:sgn-iter1}, without changing the signatures we can reduce
the question to the case where $G$ is a connected first quadrant
subgroup and $(\lambda,\eta)$ is arbitrarily close to the identity. As
usual denote by $\bx_i$ the capped one-periodic orbits of $\varphi$
with indices $\pm 2$ and 0. Since $\l$ and $\eta$ are close to 1,
without changing the signatures of $\l$ and $\eta$ we can ensure that
$\loop(\bx_i)=0$ for all three fixed points by passing to
$\varphi^6$. Now if $\l$ and $\eta$ have the same signature, either 3
or $-3$, we arrive at a contradiction exactly as in Case 2. Namely,
then all three points $x_i^6$ have the same index modulo 6. (The index
is 2 if the signature is positive and $-2$ if the signature is
negative.) The case where $G_0$ is not a first quadrant subgroup is
treated similarly, but now, by \eqref{eq:sgn-iter2}, the signature of
exactly one of the eigenvalues changes sign when we bring $\l$ and
$\eta$ close to 1 and make $G$ connected.

Next, let us examine more closely the case where $G_0$ is a first
quadrant subgroup. Then the signature of one of the eigenvalues, say
$\l$, is 3 and the other, $\eta$, has signature $-3$. Consider the
intersection $G_0\cap\bar{\Delta}\subset
G\cap\bar{\Delta}$. Parametrizing $G_0$ as in \eqref{eq:paramG} we see
that the first of these is a subgroup of $\Delta$ of order $p+q$,
where $p\geq 1$ and $q\geq 1$ and $\gcd(p,q)=1$. Thus
$$
|G_0\cap\bar{\Delta}|=p+q\geq 3
$$
and, as a consequence,
$$
|G\cap\bar{\Delta}|\geq 3.
$$
When $|G\cap\bar{\Delta}|=3$, by definition $\varphi$ is a ghost
pseudo-rotation by Definition \ref{def:ghost}. Indeed, in this case,
the first type eigenvalues are $\lambda_1=\lambda$ and
$\eta_1=\bar\eta$ and $\lambda_1=\bar{\eta}_1^2$, after if necessary
swapping $\eta$ and $\lambda$. Alternatively, the first type
eigenvalues are $\lambda_1=\bar{\lambda}$ and $\eta_1=\eta$ and again
$\lambda_1=\bar{\eta}_1^2$. (We also note that $G$ is connected, i.e.,
$G=G_0$, when $|G\cap \Delta|=3$.) Hence, we need to rule out the case
$|G\cap\bar{\Delta}|\geq 4$.

The group $G\cap\bar{\Delta}\subset \bar{\Delta}\cong S^1$ is cyclic,
as any subgroup of $S^1$. Let $(\zeta,\bar{\zeta})$ be the generator
of this group uniquely determined by the conditions that it is closest
to $(1,1)$ in $\bar{\Delta}$ and such that $\Imm\zeta>0$, and hence
$\Imm \bar{\zeta}<0$. Thus $\zeta=e^{2\pi\sqrt{-1}/\ell}$, where
$\ell=|G\cap \bar{\Delta}|\geq 4$.

The positive semi-orbit of any topological generator is dense in
$G$. Therefore, there exists a sequence of iterations $k$ bringing
$(\lambda^k,\eta^k)$ arbitrarily close to $(\zeta,\bar{\zeta})$ in
$\T^2$. For such an iteration, $\eta$ flips but $\l$ does
not. Therefore, $\eta_k=\bar{\eta}^k$ and $\l_k=\l^k$ are both close
to $\zeta$ and, by \eqref{eq:sgn-iter1} and \eqref{eq:sgn-iter2},
$$
\sgn_{\l_k}\big(\Phi^k\big)=\sgn_\l(\Phi)=3
$$
and
$$
\sgn_{\eta_k}\big(\Phi^k\big)=-\sgn_\eta(\Phi)=3.
$$
In other words, the first Krein type eigenvalues of $\Phi^k(1)$ are
exactly $\l_k$ and $\eta_k$, each occurring with multiplicity 3. Both
of the eigenvalues are close to $\zeta$.

Denote the three capped one-periodic orbits of $\varphi^k$ of indices
$\pm 2$ and 0 by $\bx_i$ with $i=0,1,2$. Then $\Phi^k(1)$ is the
direct sum of the return maps $D\varphi^k|_{x_i}$ and all three return
maps must have only first type eigenvalues, which are then
$(\lambda_k,\lambda_k)$, $(\lambda_k,\eta_k)$ and
$(\eta_k,\eta_k)$. We do not know how these pairs of eigenvalues are
assigned to the orbits $x_i$, but, for all $i$,
$$
\mu(\bx_i)=2+\loop(\bx_i),
$$
where $\loop(\bx_i)$ is even.

Let us iterate $\varphi^k$ three more times. Then, since $\ell>3$ and
$\l_k$ and $\eta_k$ are close to $\zeta$, none of the eigenvalues
``jumps'' over 1, and
$$
\mu\big(\bx_i^3\big)=2+3\loop(\bx_i)
$$
for all $i$. It follows that $\mu\big(\bx_i^3\big)=2$ modulo $6$ for
all $i$ which is impossible.

When $G_0$ is not a fist quadrant subgroup. The argument is similar,
but we intersect $G$ with the diagonal $\Delta$. The integers $p$ and
$q$ from the parametrization \eqref{eq:paramG} of $G_0$ have now
opposite signs, but are still relatively prime, and $|p|\geq 1$ and
$|q|\geq 1$. Then
$$
|G\cap\Delta|\geq |G_0\cap\Delta|=|p-q|\geq 3.
$$
When $|G\cap \Delta|=3$, the map $\varphi$ is again a ghost
pseudo-rotation by Definition \ref{def:ghost}. Indeed, in this case
both $\lambda$ and $\eta$ are in $S^1_+$ and are first type
eigenvalues, and $\lambda=\bar{\eta}^2$, where we labeled by $\lambda$
the eigenvalue closest to $1\in S^1$. Alternatively, $\lambda$ and
$\eta$ are in $S^1_+$ but $\bar{\lambda}$ and $\bar{\eta}$ are first
type eigenvalues, and again $\lambda=\bar{\eta}^2$. (As before, $G$ is
connected.)  The rest of the argument is exactly as above with the
only difference that now no eigenvalue flips.
\end{proof}

One important feature of balanced pseudo-rotations $\varphi=\varphi_H$
is that the mean index spectrum $\CSI(\varphi)$ (or equivalently the
action spectrum) completely determines the marked spectrum
$\cCSI(\varphi)$; see Section \ref{sec:spectra}. This is an immediate
consequence of the observation that $\CSI(\varphi)$ contains at most
one collection of $n+1$ consecutive points $a_0,\ldots,a_n$ with
$\sum a_i=0$. These points are then assigned indices
$-n, -n+2,\ldots, n$ and the rest of the spectrum is labeled
accordingly.

Let $\varphi=\varphi_H$ be a balanced strongly non-degenerate
pseudo-rotation of $\CP^n$ and let, as above, $\bx_0,\ldots,\bx_n$ be
its fixed points capped so that $\mu(\bx_i)=2i-n$. Then there exists a
unique true rotation $R_\varphi$, called the \emph{matching rotation},
such that
$$
\CSI(R_\varphi)=\CSI(\varphi)
$$
or, equivalently,
$$
\CS(Q)=\CS(H)
$$
up to a shift, where $Q$ is the quadratic form generating $R_\varphi$.
Then, since for both $R_\varphi$ and $\varphi$ the index spectrum
determines the marked spectrum,
$$
\cCSI(R_\varphi)=\cCSI(\varphi).
$$
The rotation $R_\varphi$ can be given, for instance, by the
Hamiltonian
$$
Q(z)= \frac{1}{2(n+1)} \sum \hmu(\bx_i)|z_i|^2.
$$
The uniqueness follows from Lemma \ref{lemma:rot-spectra}. Denoting by
$\by_i$ the capped fixed points of $R_\varphi$ with cappings again
chosen so that $\mu(\by_i)=2i-n$, for all $i=0,\ldots,n$ we have
\begin{equation}
  \label{eq:equal-ind}
\hmu(\by_i)=\hmu(\bx_i).
\end{equation}

Our next two results show that in many instances the points $x_i$ and
$y_i$ have, roughly speaking, the same Floquet multipliers. To state
the results, it is convenient to introduce the notion of the
\emph{decorated spectrum} $\hsi(\Phi)$ of an element
$\Phi\in\tSp(2n)$. Namely, this is the collection of pairs
$\big(\lambda,\sgn_\l(\Phi)\big)\in S^1\times \Z$, where
$\l\in\sigma_+(\Phi)$ and $\sgn_\l(\Phi)\neq 0$. (Thus the eigenvalues
with zero signature do not register in the decorated spectrum, and
Corollary \ref{cor:index-spec} can be rephrased as that the sequence
of indices $\mu(\Phi^k)$ determines $\hsi(\Phi)$.)  For a capped
one-periodic orbit $\bx$, we set
$\hsi(\bx):=\hsi\big(D\varphi^t|_{\bx}\big)$, where as always we used
the capping of $\bx$ to turn the linearized flow into an element of
$\tSp(2n)$. Likewise, $\loop(\bx):=\loop\big(D\varphi^t|_{\bx}\big)$,
etc.

\begin{Theorem}
\label{thm:matching1}
Let $\varphi$ be a strongly non-degenerate pseudo-rotation of $\CP^n$
such that all pseudo-rotations $\varphi^k$, $k\in\N$, are balanced and
let $R_\varphi$ be its matching rotation. Then, in the above notation,
for all $i=0,\ldots,n$
\begin{equation}
\label{eq:spec-matching}
\hsi(\bx_i)=\hsi(\by_i)\textrm{ and } \loop(\bx_i)+\mult_{-1}(\bx_i)
=\loop(\by_i).
\end{equation}

\end{Theorem}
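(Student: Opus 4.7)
The strategy is to reduce to Corollary \ref{cor:index-spec} applied separately to each pair $\Phi_i:=D\varphi^t|_{\bx_i}$ and $\Psi_i:=DR_\varphi^t|_{\by_i}$ in $\tSp(2n)$. The corollary produces $\hsi(\bx_i)=\hsi(\by_i)$ together with $\loop(\bx_i)+\mult_{-1}(\bx_i)=\loop(\by_i)+\mult_{-1}(\by_i)$ as soon as one verifies that the Conley--Zehnder index sequences of iterates coincide, i.e.\ $\mu(\bx_i^k)=\mu(\by_i^k)$ for all $k\in\N$. Since $\Psi_i$ is elliptic (fixed point of a rotation), $\mult_{-1}(\by_i)=0$; the remaining task is then to show $\mult_{-1}(\bx_i)=0$ so that the $\loop$'s also match.

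To secure the iterate-wise index identities I would first prove the matching-commutes-with-iteration statement
\[
R(\varphi^k)=R_\varphi^k\quad\text{in}\quad\tHam(\CP^n)
\]
for every $k$. This splits into two sub-steps. (1) Show that $R_\varphi$ is itself strongly non-degenerate, so that each $R_\varphi^k$ is a bona fide balanced pseudo-rotation (by Lemma \ref{lemma:new_sum}); the crucial input here is the hypothesis that every $\varphi^k$ is balanced, since a rational relation $\hmu(\bx_i)-\hmu(\bx_j)\in 2(n+1)\Q$ would, after passing to a suitable iterate $\varphi^q$, yield two distinct canonically capped fixed points whose mean indices differ by a nonzero multiple of $2(n+1)$, and the balance-cum-distinctness constraints of Theorem \ref{thm:bijection} together with the strict inequality \eqref{eq:mean-cz} should rule out this rigid configuration at arbitrarily high iterates. (2) With $R_\varphi$ strongly non-degenerate, $\varphi^k$ and $R_\varphi^k$ are balanced pseudo-rotations with identical mean index spectra as subsets of $\R$; the balanced-spectrum-determines-marking principle then matches their $\cCSI$, and Lemma \ref{lemma:rot-spectra} promotes this to equality in $\tHam(\CP^n)$.

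Granted the matching at each iterate, the labeled bijection $\bar x_{i,k}\leftrightarrow\bar y_{i,k}$ on canonically capped fixed points pulls back through the iterated cappings to yield $\mu(\bx_i^k)=\mu(\by_i^k)$ individually: the common value of $\hmu(\bx_i^k)=\hmu(\by_i^k)=k\hmu(\bx_i)$ forces the same integer recapping to carry both iterates to their canonical representatives, and within the window of length $2n$ around $k\hmu(\bx_i)$ the residue mod $2(n+1)$ pins down the Conley--Zehnder index uniquely. Applying Corollary \ref{cor:index-spec} to $\Phi_i$ and $\Psi_i$ delivers $\hsi(\bx_i)=\hsi(\by_i)$ and $\loop(\bx_i)+\mult_{-1}(\bx_i)=\loop(\by_i)$.

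Finally, to show $\mult_{-1}(\bx_i)=0$ I would use a dimension count. Since $R_\varphi$ is strongly non-degenerate, the unit spectrum $\sigma(\Psi_i)$ consists of $n$ conjugate pairs lying off the real axis, and each pair has signature $\pm 1$; thus $\sum_{\lambda}|\sgn_\lambda(\Psi_i)|=n$. The equality $\hsi(\bx_i)=\hsi(\by_i)$ transfers this to $\Phi_i$, so $\sum_{\lambda}|\sgn_\lambda(\Phi_i)|=n$ as well. The standard Krein-theoretic bound, that the real dimension of the elliptic part of $\Phi_i$ is at least $2\sum_\lambda|\sgn_\lambda(\Phi_i)|=2n$, then forces the elliptic part to exhaust the full $2n$-dimensional symplectic space, so $\Phi_i$ is elliptic and $\mult_{-1}(\bx_i)=0$, completing the proof. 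The principal obstacle is the strong non-degeneracy of $R_\varphi$ in step (1): it is the place where the balance-at-every-iteration hypothesis must be used in essential form, and without it neither the matching of marked spectra at higher iterates nor the final ellipticity argument goes through.
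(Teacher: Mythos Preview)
Your overall strategy---establish $\mu(\bx_i^k)=\mu(\by_i^k)$ for all $k$ and then invoke Corollary~\ref{cor:index-spec}---is exactly the paper's, but you take an unnecessarily circuitous route to the index equalities and the final step contains a real gap.

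For the index equalities the paper is direct: since $\CSI(\varphi)=\CSI(R_\varphi)$ and the mean index spectrum of an iterate is determined by that of the original map (multiply by $k$ and adjoin recappings), one has $\CSI(\varphi^k)=\CSI(R_\varphi^k)$ immediately; the balanced hypothesis on $\varphi^k$ together with Lemma~\ref{lemma:new_sum} for $R_\varphi^k$ then upgrades this to $\cCSI(\varphi^k)=\cCSI(R_\varphi^k)$, whence $\mu(\bx_i^k)=\mu(\by_i^k)$. There is no need to first prove $R(\varphi^k)=R_\varphi^k$ as an identity in $\tHam(\CP^n)$. More to the point, your identification of the strong non-degeneracy of $R_\varphi$ as the ``principal obstacle'' requiring the balanced hypothesis is mistaken: if $a_i-a_j\in\Q$ then for a suitable $q$ the pseudo-rotation $\varphi^q$ has two distinct capped one-periodic orbits with equal mean index, contradicting Theorems~\ref{thm:bijection} and~\ref{thm:act-index} outright---balance plays no role. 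The balanced-at-every-iterate assumption is used \emph{only} at the step where unmarked spectra are promoted to marked ones.

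Your dimension count for $\mult_{-1}(\bx_i)=0$ does not work. The assertion that $\sum_{\lambda}|\sgn_\lambda(\Psi_i)|=n$ fails whenever two first-type eigenvalues at $y_i$ are complex conjugates of one another---for instance at the middle fixed point when $n=2$ and the $a_j$ lie in arithmetic progression---since those two contributions cancel and the corresponding point of $\sigma_+(\Psi_i)$ has signature zero. In that case your Krein bound yields only that the elliptic part of $\Phi_i$ has dimension at least $0$, and the conclusion that $\Phi_i$ is elliptic collapses. The paper instead observes that strong non-degeneracy (of both $\varphi$ and $R_\varphi$) forbids $-1$ in $\sigma(x_i)$ and $\sigma(y_i)$, so that $\mult_{-1}$ vanishes on both sides and the $\loop$ equality follows directly from Corollary~\ref{cor:index-spec}.
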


This theorem together with Theorem \ref{thm:balanced-4D} yields

\begin{Corollary}
\label{cor:matching-4D}
Let $\varphi$ be a strongly non-degenerate pseudo-rotation of $\CP^2$,
which is not a ghost pseudo-rotation, and let $R_\varphi$ be its
matching rotation. Then \eqref{eq:spec-matching} holds.
\end{Corollary}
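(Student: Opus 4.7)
The plan is to reduce Corollary \ref{cor:matching-4D} directly to Theorem \ref{thm:matching1}, with Theorem \ref{thm:balanced-4D} supplying the hypothesis of that theorem for every iterate of $\varphi$. In other words, I would verify that the class of strongly non-degenerate pseudo-rotations of $\CP^2$ is closed under iteration, so that Theorem \ref{thm:balanced-4D} can be applied not only to $\varphi$ but to each $\varphi^k$.

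First, I would check that for every $k\in\N$ the map $\varphi^k$ is itself a strongly non-degenerate pseudo-rotation of $\CP^2$. By Definition \ref{def:PR}, $\varphi$ has exactly $n+1=3$ periodic points, and these are automatically its fixed points. Consequently every periodic point of $\varphi^k$ is already a periodic point of $\varphi$, hence a fixed point of $\varphi$, hence a fixed point of $\varphi^k$; conversely, each of the three fixed points of $\varphi$ is fixed by $\varphi^k$. Thus $\varphi^k$ has exactly three periodic points and is a pseudo-rotation. Moreover, strong non-degeneracy of $\varphi$ gives non-degeneracy of $\varphi^{km}=(\varphi^k)^m$ for all $m$, so $\varphi^k$ is strongly non-degenerate as well.

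Next, I would apply Theorem \ref{thm:balanced-4D} to each $\varphi^k$ separately. That theorem asserts that every strongly non-degenerate pseudo-rotation of $\CP^2$ is balanced (and has only elliptic fixed points), so we obtain that $\varphi^k$ is balanced for every $k\in\N$. This is precisely the hypothesis of Theorem \ref{thm:matching1} applied to $\varphi$ on $\CP^n$ with $n=2$. Invoking Theorem \ref{thm:matching1} then yields $\hsi(\bx_i)=\hsi(\by_i)$ and $\loop(\bx_i)=\loop(\by_i)$ for $i=0,1,2$, which is the content of \eqref{eq:spec-matching}.

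In this deduction there is essentially no obstacle: the only point where one could worry is whether Theorem \ref{thm:balanced-4D} indeed applies uniformly across all iterates, but this is automatic because that theorem is proved for the whole class of strongly non-degenerate pseudo-rotations of $\CP^2$, and membership in that class is preserved under iteration as verified above. All real work has already been carried out in the proofs of Theorems \ref{thm:balanced-4D} and \ref{thm:matching1}; the corollary is their straightforward combination.
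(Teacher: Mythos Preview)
Your proposal is correct and matches the paper's approach exactly: the paper simply states that the corollary follows from combining Theorem~\ref{thm:matching1} with Theorem~\ref{thm:balanced-4D}, and your verification that iterates of a strongly non-degenerate pseudo-rotation of $\CP^2$ remain in that class is precisely the routine step needed to make that combination work.
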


\begin{proof}[Proof of Theorem \ref{thm:matching1}]
First, observe that 
\begin{equation}
  \label{eq:CSI0}
\CSI(\varphi^k)=\CSI(R^k_\varphi)
\end{equation}
since the left-hand side is determined by $\CSI(\varphi)$ and the
right-hand side by $\CSI(R_\varphi)$. Indeed, by \eqref{eq:equal-ind},
$$
\hmu\big(\by_i^k\big)=k\hmu(\by_i)=k\hmu(\bx_i)=\hmu\big(\bx_i^k\big)
$$
and the rest of the spectra is obtained by recapping. (Note however
that this does not automatically imply that $\varphi^k$ is balanced
but only that it is ``balanced modulo $2(n+1)$''.) Combining this with
the condition that the iterates $\varphi^k$ are balanced and the
observation mentioned above that for a balanced pseudo-rotation the
index or action spectrum determines the marked spectrum, we see that
\begin{equation}
\label{eq:CSI}
\cCSI(\varphi^k)=\cCSI(R^k_\varphi).
\end{equation}
As a consequence, recalling that
$\hmu\big(\by_i^k\big)=\hmu\big(\bx_i^k\big)$, we have
$\mu(\bx_i^k)=\mu(\by_i^k)$ for all $k\in\N$, and
\eqref{eq:spec-matching} now follows from Corollary
\ref{cor:index-spec}.
\end{proof}

The requirement that all iterations $\varphi^k$ are balanced can be
eliminated if we assume that all Floquet multipliers are
distinct. Namely, we have the following result.

\begin{Theorem}
\label{thm:matching2}
Let $\varphi$ be a balanced strongly non-degenerate pseudo-rotation
and let $R_\varphi$ be its matching rotation. Assume that for every
one-periodic orbit $y_i$ of $R_\varphi$ all unit eigenvalues at $y_i$
(i.e., the elements of $\sigma(y_i)$) are distinct and
$\sigma(y_i)\cap\sigma(y_j)=\emptyset$ for any pair $i\neq j$. Then
\eqref{eq:spec-matching} holds and, in particular,
$\sigma(x_i)=\sigma(y_i)$.  Furthermore, the fixed points of $\varphi$
are elliptic and all iterates $\varphi^k$ are balanced.
\end{Theorem}

This implies Theorem \ref{thm:matching2-intro} from the introduction.

\begin{Remark}
  The spectra $\sigma(y_i)$ are determined by the collection of the
  mean indices $\hmu(\by_j)=\hmu(\bx_j)$; see Section
  \ref{sec:R}. Namely, the assumption of the theorem can be explicitly
  rephrased as that all eigenvalues
  $\exp\big(2\pi\sqrt{-1}(\hmu(\bx_i)-\hmu(\bx_j))/2(n+1)\big)$, where
  $i\neq j$, are distinct.
\end{Remark}

\begin{proof}[Proof of Theorem \ref{thm:matching2}]
  As in the proof of Theorem \ref{thm:matching1}, \eqref{eq:CSI0}
  holds. However, we do not know if the equality extends to the marked
  spectra, i.e., if \eqref{eq:CSI} holds, for $k\geq 2$. The markings
  of the spectra by indices may differ by a shift of degree
$$
s(k)=\mu(\bx_i^k)-\mu(\by_i^k).
$$
As the notation suggests, $s(k)$ is independent of $i$.  For,
otherwise, the natural ordering of $\CSI(\varphi^k)\subset \R$ would
not agree with its ordering by the Conley--Zehnder index; see Section
\ref{sec:spectra}.  Furthermore, $\varphi^k$ is balanced if and only
if $s(k)=0$, for $R_\varphi^k$ is balanced; cf.\ \eqref{eq:CSI}.

Thus $s(1)=0$ since both $\varphi$ and $R_\varphi$ are
balanced. Denote by $\Phi_i$ the linearized flow at $\bx_i$ and by
$\Psi_i$ the linearized flow at $\by_i$. Then
$$
\mu\big(\big(\Phi_i\oplus\Psi_i^{-1}\big)^k\big)=s(k)
=\mu\big(\big(\Phi_j\oplus\Psi_j^{-1}\big)^k\big)
$$
for any $i$ and $j$, and, by Corollary \ref{cor:index-spec}, we have
\begin{equation}
\label{eq:spec-equal}
\hsi\big(\Phi_i\oplus\Psi_i^{-1}\big)
=\hsi\big(\Phi_j\oplus\Psi_j^{-1}\big).
\end{equation}
The conditions of the theorem translate as that the elements in
$\sigma_+(\Psi_i)=\sigma_+(\Phi_i)$ are distinct for all $i$ (and
hence have signature $\pm 1$) and that for any pair $i\neq j$
  \begin{equation}
    \label{eq:distinct-again}
    \sigma_+(\Psi_i)\cap\sigma_+(\Psi_j)=\emptyset.
  \end{equation}

  To prove the theorem, by Theorem \ref{thm:matching1} it is enough to
  show that the decorated spectra \eqref{eq:spec-equal} are empty,
  i.e., $\sgn_\lambda\big(\Phi_i\oplus\Psi_i^{-1}\big)=0$ for all
  $\lambda\in\sigma_+\big(\Phi_i\oplus\Psi_i^{-1}\big)$, and that
  $\loop\big(\Phi_i\oplus\Psi_i^{-1}\big)=0$ and the iterates
  $\varphi^k$ are balanced for all $k$. (Since the fixed points $y_i$
  are elliptic and all eigenvalues are distinct, this will imply that
  $x_i$ are also elliptic and $\mult_{-1}(x_i)=0$.)

  Let us first prove that the decorated spectra \eqref{eq:spec-equal}
  are empty. By \eqref{eq:spec-equal}, we only need to do this for one
  of them. For the sake of brevity, set
  $\Gamma_i=\hsi\big(\Phi_i\oplus\Psi_i^{-1}\big)$ and denote by
  $\pi\colon S^1\times \Z\to S^1$ the projection to the first
  coordinate. Then $\pi|_{\Gamma_i}$ is a one-to-one map and
  $\pi(\Gamma_i)\subset \sigma_+\big(\Phi_j\oplus\Psi_j^{-1}\big)$ is
  the set of eigenvalues with non-zero signature.

  Let $A_i\subset \Gamma_i$ be the subset comprising all pairs
  $(a,m)\in\Gamma_i$, where $a\in\sigma_+\big(\Psi_i^{-1}\big)$ but
  $a\not\in\sigma_+(\Phi_i)$. Furthermore, we automatically have
  $m=\sgn_a\big(\Psi_i^{-1}\big)\neq 0$ since $(a,m)\in A_i$.  Our
  first goal is to show that at least two of the sets $A_i$ are empty.

  To this end, denote by $B_i$ the complement to $A_i$ in
  $\Gamma_i$. The set $B_i$ consists of pairs $(a,m)\in\Gamma_i$ with
  $a\in\sigma_+(\Phi_i)$. In other words, $(a,m)\in B_i$ if and only
  if $a\in \sigma_+(\Phi_i)$ and
  $m=\sgn_a(\Phi_i)-\sgn_a(\Psi_i)\neq 0$.  Observe that $|B_i|\leq n$
  and $|A_i|\leq n$ and $\pi(A_i)\cap \pi(A_j)=\emptyset$ for
  $i\neq j$ by \eqref{eq:distinct-again}, and, as a consequence, the
  sets $A_i$ are disjoint. (Henceforth, $|\cdot |$ stands for the
  cardinality.)

  We claim that in fact $|B_i|<n$ for all $i$. Without loss of
  generality we may set $i=0$ and note that
  $|B_0|\leq |\sigma_+(\Phi_0)|\leq n$. It suffices to show that
  $|B_0|\neq n$. Arguing by contradiction assume the contrary:
  $|B_0|=n$. Then $\sgn_a(\Phi_i)-\sgn_a(\Psi_i)\neq 0$ for every
  $a\in \sigma_+(\Phi_0)$. Thus $\sigma_+(\Psi_0)$ is contained in
  $\pi(\Gamma_0)$ -- none of its points gets cancelled by
  $\sigma_+(\Phi_0)$. Therefore, we have a one-to-one map
  $\hsi(\Psi_0)\cong\sigma_+(\Psi_0)\to \Gamma_j$ sending a point $a$
  to the unique pair $(a,m)\in
  \hsi\big(\Phi_j\oplus\Psi_j^{-1}\big)$. (This map need not preserve
  the signature and its image may overlap with $B_0$.) Pick $l$ such
  that $0<l\leq n$. By \eqref{eq:spec-equal} and
  \eqref{eq:distinct-again}, the image of this map is contained in
  $B_l$ and, since $|B_l|\leq n$, we have $|B_l|=|\hsi(\Psi_0)|=n$ and
  $\pi(B_l)$ is exactly the spectrum $\sigma_+(\Psi_0)$. Hence, we
  also have $|A_l|=n$ since
  $\sigma_+(\Psi_0)\cap\sigma_+(\Psi_l)=\emptyset$ by
  \eqref{eq:distinct-again}. Without loss of generality we may assume
  that $n\geq 2$. (Otherwise, the assertion of the theorem is
  obvious.)  Applying \eqref{eq:spec-equal} (with $i=1$ and $j=2$) we
  see that $A_1=A_2$ and thus $\sigma_+(\Psi_1)=\sigma_+(\Psi_2)$,
  which contradicts the assumptions of the theorem.

  Next, recall that by \eqref{eq:distinct-again} the sets $A_i$ are
  disjoint for all $i$. Fix $j$.  By \eqref{eq:spec-equal}, each set
  $A_i$ with $i\neq j$ is mapped one-to-one into $B_j$ and the images
  are also disjoint. Since $|B_j|<n$ and the number of the sets $A_i$
  with $i\neq j$ is $n$ we see that one of the sets $A_i$ must be
  empty.  Moreover, since this is true for every $j$, there must be at
  least two such sets. Indeed, assume that there is only one such set,
  say, $A_{0}=\emptyset$ but $|A_i|\geq 1$ when $i= 1,\ldots, n$. Then
$$
|B_0|\geq |A_1|+\ldots + |A_n|\geq n,
$$
which is impossible.

Thus we may assume without loss of generality that $A_0=\emptyset$ and
$A_1=\emptyset$. Then, by the definition of $A_0$, every point of
$\sigma_+(\Psi_0)$ is also a point of $\sigma_+(\Phi_0)$, i.e.,
$\sigma_+(\Psi_0)\subset \sigma_+(\Phi_0)$. Furthermore
$|\sigma_+(\Psi_0)|=n\geq |\sigma_+(\Phi_0)|$ since the elements of
$\sigma(\Psi_0)$ are distinct. It follows that
$\sigma_+(\Phi_0)=\sigma_+(\Psi_0)$ and, similarly,
$\sigma_+(\Phi_1)=\sigma_+(\Psi_1)$.  Furthermore, by
\eqref{eq:spec-equal}, $B_0=B_1$. Assume that this set is non-empty
and denote by $X\subset \sigma_+(\Phi_0)\cap\sigma_+(\Phi_1)$ the set
of the first components of $B_0=B_1$. Clearly, we also have
$X\neq \emptyset$.  Hence,
$\sigma_+(\Psi_0)\cap\sigma_+(\Psi_1)\supset X\neq \emptyset$ which is
impossible. Therefore, $B_0=\emptyset$ and
$\hsi\big(\Phi_0\oplus\Psi_0^{-1}\big)=\emptyset$.

To summarize, we have shown that the decorated spectra
\eqref{eq:spec-equal} are empty. Since all fixed points $y_i$ of
$R_\varphi$ are elliptic and the eigenvalues are distinct; the fixed
points $x_i$ of $\varphi$ are also elliptic. Therefore,
$\mult_{-1}(\Phi_i)=0$ and, by \eqref{eq:mult-index},
  $$
   \loop\big(\Phi_i\oplus\Psi_i^{-1}\big)=s(1)=0,
  $$
  where $s(1)=0$ due to the assumption that $\varphi$ is balanced. In
  a similar vein,
  $$
   s(k)=\loop\big(\Phi_i^k\oplus\Psi_i^{-k}\big)=ks(1)=0.
  $$
  Hence, $\varphi^k$ is balanced since $R_\varphi^k$ is balanced.
  This concludes the proof of the theorem.
\end{proof}

\subsection{Ghost pseudo-rotations of $\CP^2$}
\label{sec:Ghosts}
Recall from Definition \ref{def:ghost} that a strongly non-degenerate
pseudo-rotation $\varphi$ of $\CP^2$ is said to be a ghost
pseudo-rotation if its fixed points have Floquet multipliers, i.e.,
the first Krein type eigenvalues, of the form $(\lambda,\lambda)$,
$(\lambda,\eta)$ and $(\eta,\eta)$, where $\lambda$ and $\eta$ are
unit complex numbers and $\lambda=\bar{\eta}^2$. Also recall that the
Floquet multipliers do not fully determine the fixed-point data and
even the indices of the fixed points.

In this section, without fully exploring all possibilities, we will
show that for a suitable choice of fixed-point data a ghost
pseudo-rotation can satisfy all the symplectic topological conditions
from Section \ref{sec:background}. To be more specific, we prove that
for all iterates $\varphi^k$ the indices of the fixed points are
distinct and all integers of the same parity as $n$ occur as indices,
and that the marked mean index map $\cCSI$, assigning the mean index
of the point with Conley--Zehnder index $2l-n$ to $l\in \Z$, is
monotone. As a consequence, the existence of ghost pseudo-rotations
cannot be ruled out by a combinatorial argument as in the proof of
Theorem \ref{thm:balanced-4D}. We will also see that these ghost
pseudo-rotations are not balanced.

The fixed-point data also include the actions of the fixed points. For
pseudo-rotations, by Theorem \ref{thm:act-index}, the actions are
equal to the mean indices up to the factor $\pi/2(n+1)$ and a
simultaneous shift. Thus, by forcing this condition to hold, we can
have the entire fixed-point data satisfy all requirements from Section
\ref{sec:background}. Of course, this does not prove that ghost
pseudo-rotations really exist, but only that their existence does not
contradict the conditions spelled out there.

Throughout this section it is convenient to use the notation from the
proof of Theorem \ref{thm:balanced-4D}, which is slightly different
than what is used in Definition \ref{def:ghost}. Namely, we denote by
$\lambda$ and $\eta$ the eigenvalues of $D\varphi$ at the fixed points
lying in $S^1_+$ rather than the Floquet multipliers which we denote
by $\lambda_1$ and $\eta_1$. As in Remark \ref{rmk:ss} we will assume
throughout this section that all maps are semi-simple. In the proof of
Theorem \ref{thm:balanced-4D}, ghost pseudo-rotations arise in two
cases.

The first case is where one of the Floquet multipliers, say
$\lambda_1=\lambda$, is in the upper half circle $S^1_+$ and the
other, $\eta_1=\bar\eta$, is in the lower half circle, and either
$\lambda_1=\bar{\eta}_1^2$ or $\eta_1=\bar{\lambda}_1^2$. In both
situations the resonance condition from Definition \ref{def:ghost} is
satisfied with the Floquet multipliers swapped in the latter
case. Here, for the sake of brevity, we will focus on the former.

To summarize, the three fixed points of $\varphi$ have Floquet
multipliers $(\lambda,\lambda)$, $(\lambda,\bar{\eta})$ and
$(\bar{\eta},\bar{\eta})$, where $\lambda$ and $\eta$ are in $S^1_+$
and $\lambda=\eta^2$. Denote the fixed points, exactly in that order,
by $y_2$, $y_1$ and $y_0$, and set $\eta=e^{2\pi\sqrt{-1}\theta}$,
where $0<\theta<1/4$ is irrational. Hence,
$\lambda=e^{4\pi\sqrt{-1}\theta}$. Consider the short rotations
$\psi_0(t)=e^{-2\pi\sqrt{-1}\theta t}$, $t\in [0,1]$, and
$\psi_1(t)=\psi_0^{-2}(t)=e^{4\pi\sqrt{-1}\theta t}$. Then
\begin{equation}
\label{eq:ghost10}
D\varphi|_{y_i}=
\begin{cases}
\psi_0(1)\oplus\psi_0(1) &\textrm{ for } i=0,\\
\psi_0(1)\oplus\psi_1(1) &\textrm{ for } i=1,\\
\psi_1(1)\oplus\psi_1(1) &\textrm{ for } i=2,
\end{cases}
\end{equation}
where $D\varphi|_{y_i}$ on the left are viewed as time-one maps. For a
given capping of $y_i$, these conditions determine the linearized flow
only up to a loop. To specify the fixed-point data, let us assume that
for some cappings $\by_i$, we have
\begin{equation}
\label{eq:ghost11}
D\varphi|_{\by_i}=
\begin{cases}
\psi_0\oplus\psi_0 &\textrm{ for } i=0,\\
\psi_0\oplus\psi_1 &\textrm{ for } i=1,\\
\psi_1\oplus\psi_1 &\textrm{ for } i=2,
\end{cases}
\end{equation}
as elements of the universal covering of the linear symplectic
group. This assumption determines the fixed-point data (except for the
actions at $\by_i$). In our previous notation, $\by_i=\bx_i$. We will
show that this fixed-point data set does not contradict the conditions
from Section \ref{sec:background}.

The second situation where ghost pseudo-rotations arise in the proof
is when both of the Floquet multipliers are either in the upper half
circle $S^1_+$ or in the lower half circle. We will focus on the
former case. Thus $\lambda_1=\lambda$ and $\eta_1=\eta$, and
$\lambda=\bar{\eta}^2$ where $\lambda$ is the closest of the two
points to $1\in S^1$. The three fixed points $y_2$, $y_1$ and $y_0$ of
$\varphi$ have Floquet multipliers $(\lambda,\lambda)$,
$(\lambda,\eta)$ and $\eta,\eta)$. As above set
$\eta=e^{2\pi\sqrt{-1}\theta}$, where now $1/4<\theta<1/2$ and
$\theta$ is again irrational. Then
$\lambda=e^{4\pi\sqrt{-1}(1-\theta)}$. Consider the short
counter-clockwise rotations $\psi_0(t)=e^{2\pi\sqrt{-1}\theta t}$ and
$\psi_1(t)=e^{4\pi\sqrt{-1}(1-\theta) t}$, $t\in [0,1]$.

We still have \eqref{eq:ghost10}. However, \eqref{eq:ghost11} cannot
hold for any cappings of $y_i$, for the maps on the right-hand side
have now index 2 while their indices must be distinct modulo
6. Denoting by $\xi_m\in \tSp(4)$ the loop with Maslov index $2m$, we
modify \eqref{eq:ghost11} by assuming that for some cappings $\by_i$,
\begin{equation}
\label{eq:ghost21}
D\varphi|_{\by_i}=
\begin{cases}
(\psi_0\oplus\psi_0)\cdot\xi_{-2} &\textrm{ for } i=0,\\
\psi_0\oplus\psi_1 &\textrm{ for } i=1,\\
(\psi_1\oplus\psi_1)\cdot\xi_{-1} &\textrm{ for } i=2,
\end{cases}
\end{equation}
where the dot stands for the product in $\tSp(4)$. Now
$\mu(\by_0)=-2$, $\mu(\by_1)=2$ and $\mu(\by_2)=0$, i.e., in our
previous notation $\by_0=\bx_0$, $\by_1=\bx_2$ and $\by_2=\bx_1$. This
specifies a set of the fixed-point data which also turns out to be
compatible with the conditions from Section \ref{sec:background}.

\begin{Proposition}
  \label{prop:ghost1}
  With the set of the fixed-point data \eqref{eq:ghost11} and
  \eqref{eq:ghost21} for $\varphi$, the indices of the capped
  $k$-periodic orbits are in one-to-one correspondence with $2\Z$ and
  the orbits with higher index have larger action for all iterates
  $\varphi^k$, i.e., the marked mean index map $\cCSI$ is defined and
  monotone. The maps $\varphi$, if exist, are not balanced: the sum of
  the mean indices is $6\theta$ for \eqref{eq:ghost11} and $-6\theta$
  for \eqref{eq:ghost21}.
\end{Proposition}

This shows, in particular, that as stated above the fixed-point data
\eqref{eq:ghost11} and \eqref{eq:ghost21} do not contradict the
conditions from Section \ref{sec:background}. (We note that not every
modification in \eqref{eq:ghost21} produces such fixed-point data.)

\begin{proof} The argument is essentially a rather long calculation
  and here we only briefly outline the key steps starting with
  \eqref{eq:ghost11}. By the definition and additivity of the
  Conley--Zehnder index (see, e.g., \cite{SZ}), we have
\begin{eqnarray*}
  \mu\big(\by^k_0\big) &=& -2-4\lfloor k\theta\rfloor,\\
  \mu\big(\by^k_1\big) &=&
  2\lfloor 2k\theta\rfloor-2\lfloor k\theta\rfloor,\\
  \mu\big(\by^k_2\big) &=& 2+ 4\lfloor 2k\theta\rfloor
\end{eqnarray*}
for \eqref{eq:ghost11}, and
\begin{eqnarray*}
\hmu\big(\by^k_0\big) &=& -4k\theta,\\
\hmu\big(\by^k_1\big) &=& 2k\theta,\\
\hmu\big(\by^k_2\big) &=& 8k\theta.
\end{eqnarray*}

Likewise, for \eqref{eq:ghost21} the iterated indices are
\begin{eqnarray*}
\mu\big(\by^k_0\big) &=& 4\lfloor k\theta\rfloor+2-4k,\\
  \mu\big(\by^k_1\big) &=&
  2\lfloor k\theta\rfloor+2\lfloor 2k(1-\theta)\rfloor,\\
\mu\big(\by^k_2\big) &=& 4\lfloor 2k(1-\theta)\rfloor+2-2k,
\end{eqnarray*}
and 
\begin{eqnarray*}
\hmu\big(\by^k_0\big) &=& 4k(\theta-2),\\
\hmu\big(\by^k_1\big) &=& 2k(2-\theta),\\
\hmu\big(\by^k_2\big) &=& 4k(1-2\theta).
\end{eqnarray*}

Directly examining these expressions, it is not hard to see that in
both cases for every $k$ all three indices $\mu\big(\by^k_i\big)$ are
distinct modulo 6 and that $\mu\big(\by^k_i\big)>\mu\big(\by^k_j\big)$
if and only if $\hmu\big(\by^k_i\big)>\hmu\big(\by^k_j\big)$.
Recapping an orbit $\by_i^k$ results in adding an integer $6l$ to both
the index and the mean index. Again, examining the above formulas, one
can show that capped orbits in $\bPP_k(\varphi)$ with higher index
have larger mean index, i.e., the marked mean index map $\cCSI$ is
monotone. (These calculations are straightforward but tedious and we
omit them.)  Setting $k=1$ in the expressions for the mean indices, we
conclude that the sum of the mean indices of the orbits $\by_i$ is
$6\theta$ for \eqref{eq:ghost11} and $-6\theta$ for
\eqref{eq:ghost21}, i.e., the maps $\varphi$, if exist, are not
balanced.
\end{proof}

\begin{Remark}
  \label{rmk:SQ} There is a slight chance that the existence of ghost
  pseudo-rotations can be ruled out by using the equivariant
  pair-of-pants product $\wp$ introduced in \cite{Se:Eq} and the
  quantum Steenrod square $\QS$ from \cite{Wi1,Wi2}. Namely one can
  try to compare $\QS$ for $\CP^2$ with the behavior of the fixed
  points under iterations, via $\wp$. However, there is no
  contradiction in the first approximation and this approach is
  unlikely to work because it is unclear how to explicitly determine
  higher order terms (in the generator $\hh$ of
  $\H^1(\RP^{\infty};\Z_2)$) in the equivariant PSS-map without very
  specific information about $\varphi$; cf.\ \cite[Example
  3.12]{CiGG:IMRN}. (In contrast, calculating the non-equivariant
  PSS-map is straightforward for pseudo-rotations of $\CP^n$ because a
  cohomology class is completely determined by its degree.)
\end{Remark}

\section{Proof of the index divisibility theorem}
\label{sec:index-div-pf}

Our goal in this section is to prove the index divisibility theorem,
i.e., Theorem \ref{thm:index-div} (or, Theorem
\ref{thm:index-div-intro} from the introduction). Throughout the proof
we keep the notation and conventions from Section \ref{sec:index}.
Furthermore, set $\mu_k:=\mu\big(\Phi^k\big)$ and
$\mu'_k=\mu_{k+1}-\mu_k$. These sequences are additive with respect to
the direct sum. When it is essential to emphasize the role of the map
$\Phi$, we will write $\mu_k(\Phi)$ and $\mu'_k(\Phi)$.  For every
$\lambda\in\sigma_+(\Phi)$, we define the \emph{logarithmic
  eigenvalue} $\theta\in (0,\,1)$ by
$$
\lambda=\exp(\pi\sqrt{-1}\theta).
$$
Since $\Phi$ is strongly non-degenerate, $\theta\not\in\Q$.

Essentially by the definition of the Conley--Zehnder index (see, e.g.,
\cite{SZ}),
\begin{equation}
\label{eq:difference}
\mu'_k=\loop(\Phi)+\mult_{-1}(\Phi)+2\sum_{\lambda\in\sigma_+(\Phi)}
a_\lambda(k)\sgn_\lambda(\Phi),
\end{equation}
where 
\begin{equation}
\label{eq:a(k)}
a_\lambda(k)=
\begin{cases}
  0 &\textrm{ when } \lfloor
  (k+1)\theta/2\rfloor=\left\lfloor k\theta/2\right\rfloor,\\
  1 &\textrm{ when } \left\lfloor
    (k+1)\theta/2\right\rfloor=\left\lfloor k\theta/2\right\rfloor+1.
\end{cases}
\end{equation}
In the latter case we say that the eigenvalue $\lambda$ \emph{jumps}
at $k$.

The implication (b)$\Rightarrow$(a) from Theorem \ref{thm:index-div}
is an immediate consequence of \eqref{eq:difference}. The rest of this
section comprises the proof of the main assertion of the theorem, the
implication (a)$\Rightarrow$(b): the fact that (i) and (ii) hold
whenever $2l\mid \mu'_k$ for all $k\in\N$.

First, note that by \eqref{eq:difference} and \eqref{eq:a(k)} we have
$$
\mu'_1(\Phi) = \loop(\Phi)+\mult_{-1}(\Phi)
$$
which proves (i) and, as a consequence, the assertion in the case
where the map $\Phi(1)$ has no elliptic part, i.e., $\Phi_e=0$ in the
decomposition $\Phi=\phi\circ(\Phi_h\oplus \Phi_{-h}\oplus \Phi_e)$
from Section \ref{sec:index}. This also shows that the assertion holds
for $\Phi$ if and only if it holds for $\Phi_e$.  Thus in what follows
we can assume that $\Phi$ has no hyperbolic and loop parts, i.e.,
$\Phi=\Phi_e$, and, in particular,
\[
  \loop(\Phi)+\mult_{-1}(\Phi)=0.
\]
Then the iterations $\Phi^k$ also have no hyperbolic part, although
the loop part of the iterated map may be (and usually is) non-trivial.

The idea of the proof is that in this situation there exists an
eigenvalue $\lambda$ and an iteration $k$ such that only $\lambda$
jumps at $k$, and hence only $\lambda$ contributes to the change of
the index between $k$ and $k+1$. Then, $\mu'_k = \sgn_\lambda(\Phi)$
by \eqref{eq:difference}. (In fact, this is true for an infinite
sequence of iterations.) Finding such an eigenvalue is the central
part of the proof, and we believe that the statement is not true for a
particular fixed eigenvalue. The difficulty in the proof of this
assertion, which is in essence a statement about the geometry of
$\T^r$, lies in the fact that the eigenvalues might satisfy resonance
relations and in our setting they necessarily do.

The map $\Phi =\Phi_e$ decomposes into the direct sum of maps
$\Phi(\lambda)$ with eigenvalues $\lambda$ and $\bar{\lambda}$, where
$\lambda\in\sigma_+(\Phi)$:
\[
    \Phi=\bigoplus_{\lambda\in\sigma_+(\Phi)}\Phi(\lambda).
\]
Clearly, $\sgn_\lambda(\Phi(\lambda))=\sgn_\lambda(\Phi)$ and
$\sgn_{\lambda'}(\Phi(\lambda))=0$ when $\lambda'\neq \lambda$.

The implication (a)$\Rightarrow$(b) holds for $\Phi$ and one of the
maps $\Phi(\lambda)$ if and only if it holds for $\Phi(\lambda)$ and
the sum
$$ \Psi:=\bigoplus_{\lambda'\neq
  \lambda}\Phi(\lambda')
$$
of the remaining maps. The idea of the proof is to show that
\begin{equation}
\label{eq:div0}
l\mid\sgn_\lambda(\Phi)
\end{equation}
for some eigenvalue $\lambda$.  Once \eqref{eq:div0} is established,
$l\mid \mu'_k\big(\Phi(\lambda)\big)$ because (b) implies (a). Since
$$
\mu'_k(\Phi) =\mu'_k(\Psi)+\mu'_k\big(\Phi(\lambda)\big),
$$
where as above $\Phi=\Psi\oplus\Phi(\lambda)$, to prove the assertion
for $\Phi$ it is enough to establish it for $\Psi$. Now the result
follows by induction on dimension.

Turning to the actual proof, we first need to introduce some
terminology and notation.  Let us order the positive unit spectrum
$\sigma_+(\Phi)$ in an arbitrary way:
$$
\sigma_+(\Phi)=\{\lambda_1,\dotsc,\lambda_r\},
$$
and consider the eigenvalue ``vector''
\begin{equation}
\label{eq:lambda-vector}
\vec{\lambda}=\vec{\lambda}(\Phi)=\big(\lambda_1,\dotsc,
\lambda_r\big)\in\T^{r}:=(S^1)^{r}.
\end{equation}
Denote by $\Gamma(\Phi)$ or just $\Gamma$ the closure of the positive
orbit
$$
\Lambda=\{\vec{\lambda}^k\mid k\in\N\}\subset \T^r.
$$
This is a subgroup of $\T^{r}$. Let $\Gamma_0$ be the connected
component of the identity of $\Gamma$.

Denoting the coordinates on $\T^r$ by $(z_1,\dotsc,z_r)$, consider the
codimension one sub-tori $\T^{r-1}_i$ given by the condition
$z_i=1$. It is easy to see from the requirement that $\Phi$ is
strongly non-degenerate (i.e., all $\theta_i\not\in\Q$) that the group
$\Gamma$ intersects the submanifolds $\T^{r-1}_i$ transversely. We
co-orient $\T^{r-1}_i$ via the positive (clockwise) orientation of
$z_i$. Furthermore, let us call a formal linear combination of closed
co-oriented submanifolds of $\T^r$ with integer coefficients a
\emph{cycle}. For a codimension-one cycle $T$ and an oriented path
$\eta$ in $\T^r$ with end-points outside $T$ the intersection index
$\left<\eta,\,T\right>\in\Z$ is defined in an obvious way.

To $\Phi$, we associate the cycle
$$
T=\sum T_i,\textrm{ where } T_i=\sgn_{\lambda_i}(\Phi)\T^{r-1}_i,
$$
which we call the \emph{index cycle} of $\Phi$.

Finally, consider the oriented path
$$
A=\big\{\big(e^{\pi\sqrt{-1}\theta_1 t},\ldots,
e^{\pi\sqrt{-1}\theta_r t}\big)\mid t\in [0,\,1]\big\},
$$
which we will refer to as the \emph{generating arc}.  Then the arc
$\vec{\lambda}^k A$, where we use multiplicative notation for the
group operation in $\T^r$, connects $\vec{\lambda}^k$ to
$\vec{\lambda}^{k+1}$. The points $\vec{\lambda}^k$, $k\in\N$, are
never in $T$ due to the strong non-degeneracy of $\Phi$ and thus the
intersection index $\big<\vec{\lambda}^k A,\,T\big>\in\Z$ is
defined. Then \eqref{eq:difference} is simply the fact that
\begin{equation}
  \label{eq:mu-intersect}
2\big<\vec{\lambda}^k A,\,T\big>=\mu'_k,
\end{equation}
where we have assumed that $\Phi$ does not have the loop and
hyperbolic parts. The individual terms in \eqref{eq:difference} can be
interpreted in a similar vein. Namely, with $a_{\lambda_i}(k)$ defined
by \eqref{eq:a(k)}, we have
$$
\big<\vec{\lambda}^k
A,\,T_i\big>=a_{\lambda_i}(k)\sgn_{\lambda_i}(\Phi).
$$

The proof of the theorem hinges on two lemmas.

\begin{Lemma}
  \label{lemma:intersect}
  Assume that $2l\mid \mu'_k$ for all $k\in\N$. Then for every
  oriented path $\eta$ in $\Gamma$ with end points outside $T$, the
  intersection index $\left<\eta,\,T\right>$ is divisible by $l$:
\begin{equation}
  \label{eq:div-by-l}
l\mid \left<\eta,\,T\right>.
\end{equation}
\end{Lemma}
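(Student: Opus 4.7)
The plan is to pass to the universal cover $\pi\colon \R^r\to \T^r$ and exploit the explicit formula
$$
\langle \eta,\,T\rangle\,=\,\sum_{i=1}^{r}\sgn_{\lambda_i}(\Phi)\bigl(\lfloor\tilde\eta_i(1)\rfloor-\lfloor\tilde\eta_i(0)\rfloor\bigr),
$$
valid for any path $\eta$ in $\T^r$ with endpoints outside $T$ and any continuous lift $\tilde\eta\colon[0,1]\to\R^r$, where we identify $\T^r$ with $\R^r/\Z^r$ via $z_j=e^{2\pi\sqrt{-1}x_j}$. This follows immediately from the identification of $\T^{r-1}_i$ with the image of the integer hyperplane $\{x_i\in \Z\}$ together with the chosen co-orientations. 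In particular the intersection number depends only on the lifted endpoints and is additive under concatenation sharing a lifted endpoint.

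Let $\vec\phi=(\phi_1,\ldots,\phi_r)\in(0,1/2)^r$ denote the vector with $\lambda_i=e^{2\pi\sqrt{-1}\phi_i}$, so that $A$ lifts to the straight segment from $0$ to $\vec\phi$, and $\vec\lambda^kA$ lifts to the segment from $k\vec\phi$ to $(k+1)\vec\phi$. Combining the hypothesis $2l\mid \mu'_k$ with \eqref{eq:mu-intersect} gives $l\mid \langle \vec\lambda^kA,\,T\rangle$ for every $k\in\N$, which after telescoping translates to
$$
l\,\Big|\,f(\lfloor k\vec\phi\rfloor)\qquad\text{for every }k\in \N,
$$
where $f\colon \Z^r\to \Z$ is the linear functional $f(\vec n)=\sum_i\sgn_{\lambda_i}(\Phi)n_i$.

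Now let $\eta$ be an arbitrary path in $\Gamma$ from $p$ to $q$ with $p,q\notin T$. Using the density of $\Lambda=\{\vec\lambda^k\}$ in each connected component of $\Gamma$, I would pick $k_0,k_1\in \N$ with $\vec\lambda^{k_0}$ arbitrarily close to $p$ and $\vec\lambda^{k_1}$ arbitrarily close to $q$ in the same components of $\Gamma$, and join them to $p$ and $q$ respectively by short arcs in $\Gamma$ that stay in small neighborhoods of $p$ and $q$ and thus avoid $T$; these short arcs contribute zero to the intersection. Having produced a path $\eta'$ in $\Gamma$ from $\vec\lambda^{k_0}$ to $\vec\lambda^{k_1}$ with $\langle \eta',T\rangle=\langle \eta,T\rangle$, I would next lift $\eta'$ to $\R^r$ starting at $k_0\vec\phi$. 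Since $\eta'$ stays in $\Gamma$, the lift stays in the connected component of $\tilde\Gamma=\pi^{-1}(\Gamma)=V+\Z^r$ containing $k_0\vec\phi$, where $V\subset\R^r$ is the subspace corresponding to $\Gamma_0$. Hence the endpoint lift equals $k_1\vec\phi+\vec m$ for some $\vec m\in V\cap\Z^r=:L$, the lattice image of $\pi_1(\Gamma_0)$ in $\pi_1(\T^r)=\Z^r$. Comparing with the orbit concatenation $\eta^\ast=\vec\lambda^{k_0}A\cdots\vec\lambda^{k_1-1}A$ then gives
$$
\langle \eta,T\rangle=\langle \eta^\ast,T\rangle+f(\vec m),
$$
and $\langle \eta^\ast,T\rangle$ is divisible by $l$ by the previous paragraph. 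The lemma therefore reduces to the claim $l\mid f(\vec m)$ for every $\vec m\in L$.

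The proof of this last claim is where the main technical work lies. The plan is to invoke Kronecker--Weyl inside the subspace $V$: for $\vec m\in L\subset V$, find $k\in\N$ with $k\vec\phi-\vec m-\vec n\in(0,\epsilon)^r$ for some $\vec n\in L$ and arbitrarily small $\epsilon>0$; then $\lfloor k\vec\phi\rfloor=\vec m+\vec n$, so the previous divisibility relation forces $l\mid f(\vec m)+f(\vec n)$. A parallel approximation at another index $k'$ chosen so that the analogous auxiliary vector can be matched to $\vec n$ (via a telescoping argument along a generating set of $L$) gives $l\mid f(\vec n)$ separately, and subtracting yields $l\mid f(\vec m)$. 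The obstacle will be to arrange these multiple Kronecker approximations consistently so that the auxiliary lattice vectors cancel; this exploits the density of $\{k\vec\phi\bmod L\}$ in $V/L=\Gamma_0$ together with the linearity of $f$.
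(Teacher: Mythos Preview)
Your lift-and-floor formula for $\langle\eta,T\rangle$ and the telescoped consequence $l\mid f(\lfloor k\vec\phi\rfloor)$ are correct and make a clean start. The argument breaks, however, at the claim that the endpoint of the lift of $\eta'$ is $k_1\vec\phi+\vec m$ with $\vec m\in L=V\cap\Z^r$. (Note first that $\pi^{-1}(\Gamma)=V+\Z^r$ only when $\Gamma$ is connected.) Even in the connected case, from the fact that the lift stays in $k_0\vec\phi+V$ you only obtain $(k_1-k_0)\vec\phi+\vec m\in V$; since $(k_1-k_0)\vec\phi$ lies in $V+\Z^r$ but generally not in $V$, the vector $\vec m$ need not lie in $V$. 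A concrete counterexample: take $r=3$, $V=\{x_1+x_2+x_3=0\}$, and $\phi_i\in(0,1/2)$ irrational and generic with $\phi_1+\phi_2+\phi_3=1$ (e.g.\ small perturbations of $1/3$). Then $\Gamma=\Gamma_0$ is the connected $2$-torus $\{z_1z_2z_3=1\}$, yet the coordinate sum of any admissible $\vec m$ equals $k_0-k_1$, so $\vec m\notin L$ whenever $k_0\neq k_1$. The same example defeats the Kronecker plan in your final paragraph: you ask for $k\vec\phi$ to lie within $(0,\epsilon)^r$ of a point of $L\subset V$, but the coordinate sum of $k\vec\phi$ is $k$, placing it at distance $k/\sqrt3$ from $V$; no such $k\geq1$ exists once $\epsilon<1/3$.

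The paper's proof avoids these lattice issues by a different reduction. Rather than moving only the endpoints of $\eta$ onto the orbit, it first approximates $\eta$ by a broken geodesic whose segments $\xi$ are \emph{short} (length $<\epsilon$) geodesics in $\Gamma$ between orbit points $\vec\lambda^k$ and $\vec\lambda^{k'}$. Closing such a $\xi$ with the iterated arc $\alpha'$ gives a loop $\gamma'$; since $\langle\gamma',T\rangle$ depends only on the homology class in $\T^r$, one may translate by $\vec\lambda^{-k+1}$ to a loop $\gamma$ based at $\vec\lambda$. After translation the $\xi$-piece lies in an $\epsilon$-ball around $\vec\lambda$ disjoint from $T$, so $\langle\gamma,T\rangle$ equals the intersection of an iterated arc starting at $\vec\lambda$, which is divisible by $l$ by what you already proved. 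The two ingredients you are missing are precisely this preliminary reduction to short segments and the use of translation invariance of the pairing with $T$; together they replace your attempted algebraic identification of~$\vec m$.
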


\begin{Remark} Observe that this lemma must hold if the theorem is
  true: the cycles $\T_i^{r-1}$ enter $T$ with coefficients divisible
  by $l$. Moreover, \eqref{eq:div-by-l} must be satisfied for all
  paths $\eta$ in $\T^r$, but not just in $\Gamma$, with end points
  outside $T$.
\end{Remark}

Postponing the proof of the lemma, set
$$
C_i=\Gamma\cap \T^{r-1}_i.
$$
As has been pointed out above, this intersection is transverse. (Note
that the subgroup $C_i$ can have several connected components even
when $\Gamma$ is connected.)

\begin{Lemma}
  \label{lemma:components}
  There exists $i$ such that $C_i$ is not entirely contained in the
  union of the subgroups $C_j$ with $j\neq i$:
  \begin{equation}
    \label{eq:distinct}
  C_i\not\subset\bigcup_{j\neq i}C_j.
\end{equation}
\end{Lemma}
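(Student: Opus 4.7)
My plan is to argue by contradiction. Suppose $C_i\subset\bigcup_{j\neq i}C_j$ for every $i$. Since each coordinate projection $\pi_i\colon\Gamma\to S^1$ is surjective (as $\theta_i$ is irrational and $\{\lambda_i^k\}$ is dense in $S^1$), the group $\Gamma$ has positive dimension $d$, and each $C_i$ is a codimension-one closed subgroup. A Haar-measure argument on $C_i$ then shows that, were $C_i$ covered by the intersections $C_i\cap C_j$ with $j\neq i$, at least one of these intersections must itself have codimension one in $\Gamma$, forcing $C_i^0\subset C_j$ and in fact $C_i^0=C_j^0$. This partitions $\{1,\ldots,r\}$ into equivalence classes under $i\sim j\iff C_i^0=C_j^0$, each of size at least two.

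I next fix such a class $I$, set $K:=C_i^0$ for $i\in I$, and pass to the quotient $T:=\Gamma/K$. Being a monothetic compact abelian Lie group of dimension one, $T$ is isomorphic to $S^1\times\Z/N$ for some $N\geq 1$. The characters $p_i$ for $i\in I$ descend to distinct characters $\bar p_i$ of $T$, non-trivial on the identity component $T^0\cong S^1$. Writing $\bar p_i=(a_i,\phi_i)\in\Z\times\widehat{\Z/N}$ with $a_i\neq 0$, let $G_i:=\ker\bar p_i$ be the associated finite subgroup of $T$. A direct computation identifies $G_i\subsetneq G_j$ with the condition $\lambda_j=\lambda_i^k$ for some integer $k$ with $|k|\geq 2$; the equality $G_i=G_j$ for distinct $i,j\in I$ is excluded since it would force $\lambda_j=\lambda_i$ or $\lambda_j=\bar\lambda_i$, both impossible under our hypotheses (the $\lambda_i$'s are distinct and lie in the open upper half circle). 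The strict partial order $\subsetneq$ on the finite family $\{G_i\}_{i\in I}$ therefore has a maximal element $i^*$.

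To finish, I aim to exhibit an element $g\in G_{i^*}$ lying in no other $G_j$, so that lifting $g$ back to $\Gamma$ produces the required element of $C_{i^*}\setminus\bigcup_{j\neq i^*}C_j$. The natural candidate is $g=(\zeta,0)\in G_{i^*}$ with $\zeta$ a primitive $|a_{i^*}|$-th root of unity in $T^0$. If $g$ were to belong to $G_j$ for some $j\neq i^*$, a short divisibility calculation gives $|a_{i^*}|\mid a_j$; combined with the maximality of $i^*$ in the partial order, this should yield a contradiction with the distinctness of the $\bar p_i$'s. The principal obstacle, which I expect to handle by refining the choice of $g$, is that when $N>1$ the kernels $G_i$ need not be cyclic, so a single element of $G_{i^*}$ belonging to $G_j$ does not immediately imply $G_{i^*}\subset G_j$. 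I plan to resolve this either by choosing $g$ in a suitable non-identity coset of $T^0$ to leverage the non-trivial characters $\phi_j$, or by a secondary quotient reducing to the case $N=1$, where every kernel is cyclic of order $|a_i|$ and the chain argument on $|a_i|$ applies directly.
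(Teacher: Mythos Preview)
Your reduction is largely sound, but the argument stops precisely where the real work begins. The obstacle you flag --- that for $N>1$ an element $(\zeta,0)\in G_{i^*}\cap G_j$ does not force $G_{i^*}\subset G_j$, so maximality of $G_{i^*}$ yields no contradiction --- is the heart of the matter, not a technicality. Neither proposed fix is executed. The ``secondary quotient reducing to $N=1$'' is unlikely to succeed: any further quotient of $T$ that collapses the $\Z/N$ factor will identify $\bar p_i$ with $\bar p_j$ whenever $a_i=a_j$, and it is exactly the indices with $|a_j|=|a_{i^*}|$ that you must separate. The first fix, choosing $g$ in a non-identity coset of $T^0$, points in the right direction but \emph{is} the substance of the proof rather than a refinement of it. In effect your quotient construction reduces the general lemma to its one-dimensional instance (for the indices in $I$), and then does not prove that instance.

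The paper handles this as follows. The general $\Gamma$ is reduced to a one-dimensional \emph{subgroup} $\Gamma'\subset\Gamma$ (rather than a quotient) topologically generated by a point $\vec\lambda'$ close to $\vec\lambda$, so the hypotheses --- distinct $\lambda'_i$ in the open upper half-circle --- persist. In the one-dimensional case one chooses $i^*$ so that $C_{i^*}\cap\Gamma_0$ (cyclic of order $|\alpha_{i^*}|$) is maximal among all $C_j\cap\Gamma_0$. Indices $j$ with $|\alpha_j|\neq|\alpha_{i^*}|$ are handled by a generator of $C_{i^*}\cap\Gamma_0$, essentially your computation. For $j$ with $|\alpha_j|=|\alpha_{i^*}|$ one passes to the component $\Gamma_1$ containing the generator $\vec\lambda$ and shows directly that $(C_{i^*}\cap\Gamma_1)\cap(C_j\cap\Gamma_1)=\emptyset$: if this intersection were nonempty, the projection of $\Gamma_1$ to the $(z_j,z_{i^*})$-coordinate torus would contain the origin, hence coincide with the projection of $\Gamma_0$, forcing $(\lambda_j,\lambda_{i^*})$ onto that one-parameter subgroup and therefore $\lambda_j=\lambda_{i^*}^{\pm1}$. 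This step genuinely uses the distinguished coset singled out by $\vec\lambda$ and does not follow from the abstract group structure of the $G_i$; it is what your sketch is missing.

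There is also a smaller gap: having fixed the class $I$, you only arrange the element of $G_{i^*}$ to avoid $G_j$ for $j\in I$. The lift to $C_{i^*}$ must also avoid $C_j$ for $j\notin I$. This is routine --- since $C_j^0\neq K$ for such $j$, each $C_{i^*}\cap C_j$ has dimension strictly less than $\dim K$, so a generic point of the relevant $K$-coset works --- but it should be stated.
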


\begin{Remark}
  This lemma depends only on the assumptions that the components
  $\lambda_i$ of $\vec{\lambda}$ are distinct and that
  $\theta_i\in (0,\,1)$ for all $i$. (However, the eigenvalues of
  $\Phi$ need not be distinct.) Note also that, as will be clear from
  the proof, while those $i$ for which \eqref{eq:distinct} holds can
  be explicitly described, \eqref{eq:distinct} does not need to be
  satisfied for all $i$. In fact, we can have $C_i\subset C_j$ for
  some $i\neq j$, and then of course \eqref{eq:distinct} does not hold
  for this $i$.
\end{Remark}

The theorem readily follows from these two lemmas. Namely, let $i$ be
as in Lemma \ref{lemma:components}. Pick a short path $\eta$ in
$\Gamma$ transverse to $C_i$, intersecting $C_i$ at one point and not
intersecting any $C_j$ with $j\neq i$. Such a path exists since the
complement to the union of $C_j$, $j\neq i$, in $C_i$ is non-empty.

Then, by Lemma
\ref{lemma:intersect},
$$
l\mid \left<\eta,\,T\right>=\left<\eta,\,T_i\right>=\pm
\sgn_{\lambda_i}(\Phi).
$$
Splitting off $\Phi(\lambda_i)$ as described above, we reduce the
dimension and the theorem follows by induction.  To complete the
proof, it remains to prove the lemmas.

\begin{proof}[Proof of Lemma \ref{lemma:intersect}] Let us equip
  $\Gamma$ with the metric induced by the standard flat metric on
  $\T^r$.  It suffices to prove the lemma for short geodesics $\xi$
  (of length less than some $\eps>0$ to be specified later) connecting
  points of the orbit $\Lambda$. Indeed, any path $\eta$ can be
  arbitrarily well approximated by broken geodesics with segments
  $\xi_q$ of this type. Thus
  $\left<\eta,\,T\right>=\sum \left<\xi_q,\,T\right>$, where every
  term on the right is divisible by $l$. (In fact, proving the lemma
  for such short geodesics $\xi$ would be sufficient for our
  purposes.)

  We show that $l\mid \left<\xi,\,T\right>$ in several steps. First
  consider a path $\alpha$ in $\T^r$ -- an iterated arc -- obtained by
  concatenating several adjacent copies of the generating arc $A$:
  \begin{equation}
    \label{eq:iterated_arc}
    \alpha=\vec{\lambda}^k A\cup \vec{\lambda}^{k+1}A\cup\ldots\cup
    \vec{\lambda}^{m} A.
\end{equation}
Then, by \eqref{eq:mu-intersect},
  $$
  2\left<\alpha,\,T\right>=\mu'_k+\mu'_{k+1}+\ldots+\mu'_{m},
  $$
  and hence
  \begin{equation}
    \label{eq:div-alpha}
    l\mid \left<\alpha,\,T\right>.
\end{equation}

Next, fix a small neighborhood $V$ of $\vec{\lambda}$ disjoint from
$T$ and set $\eps>0$ to be the radius of $V$, i.e., the supremum of
the length of a geodesic in $V$ starting at $\vec{\lambda}$.  Pick a
point $\vec{\lambda}^k\in V\cap\Lambda$ and let $\alpha$ be a path as
above connecting $\vec{\lambda}$ to $\vec{\lambda}^k$. We complete
$\alpha$ to a loop $\gamma$ by concatenating it with a geodesic
$\zeta$ in $V$ connecting its end points. Then
  $$
  \left<\gamma,\,T\right>=\left<\alpha,\,T\right>,
  $$
  since $V$ is disjoint from $T$ and thus
  $\left<\zeta,\,T\right>=0$. Therefore, by \eqref{eq:div-alpha},
  \begin{equation}
    \label{eq:div-gamma}
  l\mid \left<\gamma,\,T\right>.
\end{equation}
Moreover, since the intersection index $\left<\gamma,\,T\right>$
depends only on the homology class of $\gamma$, the same is true for
any loop $\gamma'$ obtained from $\gamma$ by parallel transport in
$\T^{r}$.

Let $\xi$ be a geodesic in $\Gamma$ of length less than $\eps$
connecting $\vec{\lambda}^k$ to $\vec{\lambda}^{k'}$ for some $k'$,
where without loss of generality we can assume that $k<k'$. Let us
connect $\vec{\lambda}^k$ to $\vec{\lambda}^{k'}$ by the iterated arc
$\alpha'$ defined by \eqref{eq:iterated_arc} with
$m=k'$. Concatenating $\alpha'$ with the geodesic $-\xi$ (the reversed
orientation) we obtain a loop $\gamma'$ in $\T^r$.

Consider the loop $\gamma=\vec{\lambda}^{-k+1}\gamma'$. Then
$\left<\gamma',\,T\right>=\left<\gamma,\,T\right>$ and thus
  $$
  \left<\xi,\,T\right>=
  \left<\gamma',\,T\right>-\left<\alpha',\,T\right>=
  \left<\gamma,\,T\right>-\left<\alpha',\,T\right>.
  $$
  By \eqref{eq:div-alpha} and \eqref{eq:div-gamma}, both terms on the
  right-hand side are divisible by $l$. Therefore,
  $l\mid \left<\xi,\,T\right>$, which concludes the proof of the
  lemma.
  \end{proof}

  \begin{proof}[Proof of Lemma \ref{lemma:components}] The argument is
    carried out in three steps.

    \emph{Step I.} To set the stage for dealing with more complicated
    situations, let us first consider the case where $\Gamma$ is
    connected. Then $\Gamma$ contains a dense one-parameter subgroup
    \begin{equation}
      \label{eq:1-param}
      \vec{\alpha}^t:=\big\{\big(e^{\pi\alpha_1\sqrt{-1}t},\dotsc, 
      e^{\pi\alpha_r\sqrt{-1}t}\big)\mid t\in\R\big\}.
  \end{equation}
  Observe that all coefficients $\alpha_i$ are necessarily
  distinct. Indeed, assume otherwise: e.g., $\alpha_1=\alpha_2$. Then,
  since the one-parameter subgroup is dense in $\Gamma$, the
  coordinates $z_1$ and $z_2$ agree for all points in $\Gamma$, which
  is impossible because $\lambda_1\neq \lambda_2$.

  Moreover, we claim that the absolute values $|\alpha_i|$ are also
  distinct. Again, arguing by contradiction, assume that, e.g.,
  $\alpha_1=-\alpha_2$. Then $z_1=\bar{z}_2$ at every point of
  $\Gamma$. However, we have
  $\lambda_i=\exp\big(\pi\sqrt{-1}\theta_i\big)$, where $0<\theta_i<1$
  for all $i$, and hence $\lambda_i\neq \bar{\lambda}_j$ for any $i$
  and $j$.

  Pick $\alpha_i$ with the largest absolute value and set
  $t=2/\alpha_i$. Then, as is easy to see,
    \[
      \vec{\alpha}^t\in C_i,\textrm{ but }\vec{\alpha}^t\not\in
      C_j\textrm{ when } j\neq i.
    \]

    \emph{Step II.} Here we focus on the case where $\Gamma$ is
    one-dimensional. This is the crucial step of the proof. When
    $\Gamma$ is connected, Step 1 applies. Thus we can assume that
    $\Gamma/\Gamma_0$ is non-trivial. (This group is automatically
    cyclic -- it is generated by the image of the topological
    generator $\vec{\lambda}$ of $\Gamma$ -- but we do not need this
    fact.) We start with several observations.

    First, note that it suffices to only consider the case where
    $\Gamma/\Gamma_0$ is a cyclic subgroup $\Z_p$ of prime order $p$.
    Indeed, pick a subgroup isomorphic to $\Z_p$ in $\Gamma/\Gamma_0$
    for some prime $p$ and let $\Gamma'$ be its inverse image in
    $\Gamma$. Then, since every connected component of $\Gamma'$ is
    also a connected component of $\Gamma$, the lemma holds for
    $\Gamma$ whenever it holds for $\Gamma'$. From now on we will
    assume that $\Gamma/\Gamma_0\cong \Z_p$ where $p$ is
    prime. Furthermore, without loss of generality we can also assume
    that $\vec{\lambda}$ projects to $1\in\Z_p$. We denote its
    connected component by $\Gamma_1$.

    Next, we need to find a convenient parametrization of $\Gamma$.
    The connected component of the identity $\Gamma_0$ of $\Gamma$ is
    a one-parameter subgroup of the form \eqref{eq:1-param}, where now
    $\alpha_i\in\Z$ for all $i$. However, in contrast with Step I, the
    map $t\mapsto \vec{\alpha}^t$ from $\R$ to $\Gamma_0$ is not
    one-to-one. Without loss of generality, we may assume that
    $\gcd(\alpha_1,\ldots,\alpha_r) =1$.  Then this parametrization of
    $\Gamma_0$ is a group homomorphism with kernel $2\Z$. Since
    $\Gamma$ and hence $\Gamma_0$ are transverse to $\T^{r-1}_i$, we
    have $\alpha_i\neq 0$ for all $i$.

    Furthermore, we claim that there exists $\vec{\beta}\in \Gamma_1$
    such that $\vec{\beta}^p=1$ and the map
    \begin{equation}
      \label{eq:param}
      P\colon\R/2\Z\times \Z_p\to \Gamma, \quad (t,k)
      \mapsto \vec{\alpha}^t\vec{\beta}^k
  \end{equation}
  is an isomorphism. Indeed, by construction,
  $\vec{\lambda}^p\in\Gamma_0$. Since $\Gamma_0\cong S^1$, there
  exists, $\vec{\zeta}\in\Gamma_0$ such that
  $\vec{\zeta}^p=\vec{\lambda}^p$, and it suffices to set
  $\vec{\beta}=\vec{\zeta}^{-1}\vec{\lambda}\in \Gamma_1$. The map
  \eqref{eq:param} is the sought parametrization of $\Gamma$ and it is
  routine to check that $P$ is indeed a group isomorphism and
  $\Gamma_0$ is the image of $\R/2\Z\times \{0\}$. Since
  $\vec{\beta}^p=1$, we have
  \[
    \vec{\beta}=\big(e^{2\pi\alpha_1\sqrt{-1}q_1/p}, \dotsc,
    e^{2\pi\alpha_r\sqrt{-1}q_r/p}\big), \textrm{ where } 0\leq q_i< p
    \textrm { for all } i=1,\ldots, r.
  \]
  Note also that $\vec{\beta}$ satisfying the above conditions is not
  unique. The ambiguity is up to a $p$-th root of unity in $\Gamma$,
  i.e., up to multiplication by $\vec{\alpha}^{2m/p}$, $m\in\Z$. (We
  do not need this fact.)

  Observe for a future use that we can take now any element
  $\vec{\lambda}=\vec{\alpha}^t\vec{\beta}$, with $t\not\in\Q$, as a
  topological generator of $\Gamma$. Then
  \begin{equation}
    \label{eq:lambda_j}
  \lambda_j=e^{\pi\sqrt{-1}(\alpha_jt+2q_j/p)}.
\end{equation}

Consider the intersections
$$
C_i^0=\Gamma_0\cap \T_i^{r-1}\subset C_i.
$$
This is the group of the roots of unity in $\Gamma_0\cong S^1$ of
degree $|\alpha_i|$, i.e., the unique cyclic subgroup of order
$|C_i^0|=|\alpha_i|$. We partially order the set of these subgroups by
inclusion, which is equivalent to partially ordering the collection
$\{\alpha_i\}$ by divisibility: $\alpha_i$ is considered to be greater
than or equal to $\alpha_j$ if $\alpha_j$ divides $\alpha_i$ or
equivalently $C^0_j\subset C^0_i$. (When $\Gamma$ is connected,
$\vec{\lambda}=\vec{\alpha}^t$ for some $t\not\in\Q$, and it follows
that all $|\alpha_i|$ are distinct. Hence, all groups $C_j=C_j^0$ are
distinct and cyclic. As a consequence, a maximal subgroup $C_i$ is not
contained in the union of the other subgroups $C_j$. This is a
slightly simplified version of the argument in Step I proving Lemma
\ref{lemma:components} in the case where $\Gamma$ is connected and
$\dim\Gamma=1$.)

Next observe that the natural map
$$
C_i/C_i^0\to \Gamma/\Gamma_0
$$
is necessarily onto, and hence the sequence
\begin{equation}
  \label{eq:C-seq}
  0\to C_i^0\to C_i\to \Gamma/\Gamma_0\to 0
\end{equation}
is exact. The reason is that the intersections of the connected
components of $\Gamma$ with $\T_i^{r-1}$ are disjoint, since the
components are disjoint, and have the same cardinality since the
components represent the same homology class. (Alternatively, these
intersections are all obtained by parallel transport from each other.)
Then the intersection of every component with $\T_i^{r-1}$ is
non-empty because $\Gamma_0\cap \T_i^{r-1}$ contains the unit
in~$\T^r$.

Without the requirement that $\Gamma$ is connected, the proof hinges
on two claims. The first one, independent of the background assumption
that $\Gamma/\Gamma_0$ has prime order, asserts that while in general
the groups $C_i^0$ may coincide, the full groups $C_i$ are necessarily
distinct.

\begin{Claim}
  \label{claim:distinct}
  For any $s\neq j$, we have $C_s\neq C_j$.
\end{Claim}

The second claim which is the key to the proof shows that $C_i$ is
almost always cyclic. This claim fully relies on the assumption that
$\Gamma/\Gamma_0\cong\Z_p$.

    \begin{Claim}
      \label{claim:cyclic}
      The group $C_i$ is \emph{not} cyclic if and only if
      $p \mid \alpha_i$ and $q_i=0$.
    \end{Claim}

    The key point here is that $C_i$ is cyclic if $q_i\neq 0$. The
    rest of the statement, not used in our argument, is easy to prove
    and included only for the sake of completeness.
    
    Assuming both claims let us prove Lemma \ref{lemma:components}
    when $\dim \Gamma=1$.  Let $C^0_i$ be a maximal
    subgroup with respect to the inclusion. Without loss of
    generality, we may require that $i=r$. We will show that $C_r$ is
    not covered by other groups $C_j$ or there exists another group
    $C_s$ such that $C^0_s=C^0_r$ and $C_s$ is not covered.

    By maximality, $C^0_r$ is not contained in any other subgroup
    $C_j^0$ unless $C_j^0=C_r^0$ or, equivalently,
    $\alpha_r\nmid\alpha_j$ unless $|\alpha_r|=|\alpha_j|$.  Since
    $C^0_r$ is cyclic, we can have
    \begin{equation}
      \label{eq:inclusion}
    C_r\subset \bigcup_{j\neq r} C_j
  \end{equation}
  and, as a consequence (by intersecting with $\Gamma_0$),
  $$
  C_r^0\subset \bigcup_{j\neq r} C_j^0
  $$
  only when $C^0_r\subset C_s^0$ for some $s<r$. Because $C^0_r$ is
  maximal, this is equivalent to $C^0_r= C_s^0$, i.e.,
  $\alpha_s=\pm\alpha_r$. In other words, $C_r^0$ (and hence $C_r$) is
  not contained in the union of other subgroups if it is a unique
  maximal subgroup of order $|\alpha_r|$, and then the proof is
  finished.

  Hence, to continue, we can assume that there exist at least two
  maximal subgroups $C^0_s=C^0_r$ of the same order
  $|\alpha_r|=|\alpha_s|$. In this case we may not have both $q_r$ and
  $q_s$ equal to zero. Indeed, if $\alpha_r=\alpha_s$ we may not have
  $q_r=q_s$, for then we would have $\lambda_r=\lambda_s$ by
  \eqref{eq:lambda_j}. If $\alpha_r=-\alpha_s$, we may not have
  $q_r=0=q_s$, for then we would have
  $\lambda_r=\bar{\lambda}_s$. Without loss of generality, we can
  assume that $q_s\neq 0$. By Claim \ref{claim:cyclic}, $C_s$ is
  cyclic.

  Finally, let us show that $C_s$ is not contained in the union of
  $C_j$, $j\neq s$. To see this, assume the contrary:
  \eqref{eq:inclusion} is satisfied with $r$ replaced by $s$. Then
  $C_s\subset C_j$ for some $j\neq s$ since $C_s$ is cyclic. We claim
  that in fact $C_s=C_j$. Indeed, from $C_s\subset C_j$ we infer that
  $C_s^0=C_j^0$ because $C_s^0$ is maximal. As a consequence,
  $|C_j|=|C_s|$ and hence $C_s=C_j$, which is impossible by Claim
  \ref{claim:distinct}.

  This proves Lemma \ref{lemma:components} in the setting of Step II
  modulo Claims \ref{claim:distinct} and \ref{claim:cyclic}, and it
  remains to prove the claims.

  \begin{proof}[Proof of Claim \ref{claim:distinct}]
  We will prove more: $C_s^1\cap C_j^1=\emptyset$ when $C_s^0=C_j^0$,
  where $C_i^1=C_i\cap \Gamma_1$. Thus assume that $C_s^0=C_j^0$ or
  equivalently $|\alpha_s|=|\alpha_j|$.  Set
   $$
   \pi=(\pi_{s},\pi_j)\colon \T^r\to \T^2
  $$
  to be the projection to the product of the two coordinate circles
  $S^1_{s}$ and $S^1_j$. These circles intersect only at the origin in
  $\T^2$. Thus, if $C_{s}^1\cap C_j^1$ is non-empty, we have
  $$
  \pi\big(C_{s}^1\cap C_j^1 \big)\subset \pi\big(C_{s}^1\big) \cap
  \pi\big(C_j^1\big)\subset
  \big(S^1_{s}\times\{1\}\big)\cap\big(\{1\}\times S^1_j\big)
  =\{(1,1)\}
  $$
  which is exactly the origin in $\T^2$.  As a consequence,
  $\pi(\Gamma_1)$ contains the origin $\{(1,1)\}$. On the other hand,
  $\pi(\Gamma_1)$ is a parallel translation of a subgroup and, to be
  more precise, of $\pi(\Gamma_0)$. A parallel translation of a
  subgroup is necessarily a subgroup when it contains the unit
  element.  Hence, $\pi(\Gamma_1)$ is also a subgroup and
  $$
  \pi(\Gamma_1)=\pi(\Gamma_0),
  $$
  and furthermore $\pi(\Gamma_0)=\pi(\Gamma)$ since
  $\vec{\lambda}\in\Gamma_1$ is a topological generator.

  In particular, $\pi(\vec{\lambda})\in\pi(\Gamma_0)$. As a
  consequence, for some $t\in\R$ we have
  $$
  (\lambda_{s},\lambda_j)=\pi\big(\vec{\lambda}\big)
  =\pi\big(\vec{\alpha}^t\big)= \big(e^{\pi\alpha_{s}\sqrt{-1}t},
  e^{\pi\alpha_j\sqrt{-1}t}\big),
  $$
  where $|\alpha_j|=|\alpha_s|$. This implies that
  $\lambda_{s}=\lambda_j$ if $\alpha_{s}=\alpha_j$, which is
  impossible since all components of $\vec{\lambda}$ are distinct. If
  $\alpha_{s}=-\alpha_j$, we have $\lambda_{s}=\bar{\lambda}_j$ which
  is also impossible since $\theta_{s}$ and $\theta_r$ are both in
  $(0,\,1)$. This proves Claim \ref{claim:distinct}.
\end{proof}

   \begin{proof}[Proof of Claim \ref{claim:cyclic}]
     Throughout the proof we will use additive notation and suppress
     the subscript $i$ in the notation. Thus, $C=C_i$,
     $\alpha=\alpha_i$, etc. By \eqref{eq:C-seq}, we have the short
     exact sequence
      $$
      0\to C^0\to C\to \Z_p\to 0,
      $$
      where $C^0$ is cyclic of order $|\alpha|$. It follows that $C$
      is automatically cyclic when $\gcd(p,\alpha)=1$, i.e.,
      $p\nmid\alpha$ since $p$ is prime.

      Let us identify $C$ with its inverse image in
      $\R/2\Z\times \Z_p$. Then $C$ is determined by the condition
      \begin{equation}
        \label{eq:tk}
      \frac{t}{2}\alpha +\frac{kq}{p}\in\Z.
    \end{equation}
    Here we can view $k$ as either an element of $\Z_p$ or an element
    of $\Z$, and we prefer the latter. Hence, if $q=0$ we have
    $C=C^0\times \Z_p$ since $t$ and $k$ are not related. Thus $C$ is
    not cyclic if $p \mid \alpha$.

    To finish the proof -- and this is the heart of the matter -- it
    suffices to show that $C$ is cyclic whenever $q\neq 0$. (We do not
    need to use the condition $p\nmid\alpha$ at this point.)

    Consider $g=(t,k)\in C$ such that $k \not\equiv 0 \mod p$, i.e.,
    $g\not\in C^0$. Clearly, $pg\in C^0$ (for any $g\in C$). We claim
    that $g$ is a generator of $C$ if (and only if) $pg$ is a
    generator of $C^0$. Indeed, assume that $pg$ is a generator of
    $C^0$ and denote by $G\subset C$ the subgroup generated by
    $g$. Then $C^0\subset G$. We have the exact sequence
      $$
      0\to C^0\to G\to\Z_p\to 0,
      $$
      where the map $G\to\Z_p$ is onto since $k \not\equiv 0 \mod p$
      and $p$ is prime. Thus $|G|=p|\alpha|=|C|$, and hence
      $G=C$. (The converse is not used in our argument and we omit its
      proof.)

      It remains to find $g=(t,k)$ with $k\neq 0$ such that $pg$ is a
      generator of $C^0$. By \eqref{eq:tk}, $t$ and $k$ are related by
      the condition that
      $$
      \frac{t}{2}=-\frac{kq}{p\alpha}+\frac{l}{\alpha}
      $$
      for some $l\in\Z$, and $pt$ needs to be a generator of
      $C^0\subset \R/2\Z$. Identifying $\Z_{|\alpha|}$ with
      $C^0\subset \R/2\Z$ via $s\mapsto 2s/\alpha$,
      $s\in \Z_{|\alpha|}$, we see that $pg$ is a generator of $C^0$
      if and only if $-kq+lp$ is relatively prime with $\alpha$. In
      other words, we need to find $k \not\equiv 0 \mod p$ and
      $l\in \Z$ such that $-kq+lp$ is relatively prime with
      $\alpha$. For instance, we can look for a solution $(k,l)$ of
      the equation
      $$
      -kq+lp=1.
      $$
      This equation does have a solution since $0<q<p$ and thus
      $\gcd(q,p)=1$. Moreover, $k$ is not divisible by $p$ because $1$
      is not divisible by $p$.
    \end{proof}

  This concludes Step II of the proof.
   
  \emph{Step III.} To deal with the general case, we observe that the
  lemma automatically holds for $\Gamma$ whenever it holds for a
  closed subgroup $\Gamma'$ of $\Gamma$. (We have already used this
  observation in Step II, reducing the problem to the setting where
  $\Gamma/\Gamma_0\cong\Z_p$.)

  Furthermore, it is not hard to see that one can find a point
  $\vec{\lambda}'\in\Gamma$ arbitrarily close to $\vec{\lambda}$,
  which topologically generates a one-dimensional subgroup $\Gamma'$
  of $\Gamma$.

  Indeed, the assertion is clear when $\Gamma$ is connected.  For, in
  this case, identifying $\Gamma=\Gamma_0$ with some torus
  $\T^m=\R^m/\Z^m$, it suffices to take a point $\vec{\zeta}$ with all
  irrational coordinates $(z_1,\ldots,z_m)$ such that all ratios
  $z_i/z_j$ are rational. (In other words, the union of closed
  one-parameter subgroups is dense in the torus.)  Next, let $k$ be
  the order of the cyclic group $\Gamma/\Gamma_0$, i.e., $k$ is the
  smallest positive integer such that $\vec{\lambda}^k\in
  \Gamma_0$. Arbitrarily close to $\vec{\lambda}^k$ there exists a
  point $\vec{\zeta}$ topologically generating a closed
  one-dimensional subgroup of $\Gamma_0$.  The map $z\mapsto z^k$ from
  $\Gamma_1$ to $\Gamma_0$ is onto -- it is a covering map -- and we
  can set $\vec{\lambda}'$ to be an inverse image of $\vec{\zeta}$.

  When $\vec{\lambda}'$ is sufficiently close to $\vec{\lambda}$, its
  components $\lambda'_i=\exp\big(\pi\sqrt{-1}\theta'_i\big)$ are
  distinct and all $\theta'_i$ can be taken close to $\theta_i$ and
  hence in $(0,\,1)$. Therefore, by Step II, the lemma holds for
  $\Gamma'$ and, as has been pointed out above, it then also holds for
  $\Gamma$.
  \end{proof}

\end{document}